\documentclass{amsart}
\usepackage{amssymb,amsmath,stmaryrd,mathrsfs}
\def\definetac{\newif\iftac}    
\ifx\tactrue\undefined
  \definetac
  \ifx\state\undefined\tacfalse\else\tactrue\fi
\fi
\iftac\else\usepackage{amsthm}\fi
\usepackage[all,2cell]{xy}
\usepackage{enumitem}
\usepackage{xcolor}
\definecolor{darkgreen}{rgb}{0,0.45,0} 
\usepackage[pagebackref,colorlinks,citecolor=darkgreen,linkcolor=darkgreen]{hyperref}
\usepackage{mathtools}          
\usepackage{tikz}
\usepackage{braket}             

\usepackage{url}                
\usepackage{xspace}             

\makeatletter
\let\ea\expandafter

\def\mdef#1#2{\ea\ea\ea\gdef\ea\ea\noexpand#1\ea{\ea\ensuremath\ea{#2}\xspace}}
\def\alwaysmath#1{\ea\ea\ea\global\ea\ea\ea\let\ea\ea\csname your@#1\endcsname\csname #1\endcsname
  \ea\def\csname #1\endcsname{\ensuremath{\csname your@#1\endcsname}\xspace}}

\DeclareRobustCommand\widecheck[1]{{\mathpalette\@widecheck{#1}}}
\def\@widecheck#1#2{%
    \setbox\z@\hbox{\m@th$#1#2$}%
    \setbox\tw@\hbox{\m@th$#1%
       \widehat{%
          \vrule\@width\z@\@height\ht\z@
          \vrule\@height\z@\@width\wd\z@}$}%
    \dp\tw@-\ht\z@
    \@tempdima\ht\z@ \advance\@tempdima2\ht\tw@ \divide\@tempdima\thr@@
    \setbox\tw@\hbox{%
       \raise\@tempdima\hbox{\scalebox{1}[-1]{\lower\@tempdima\box
\tw@}}}%
    {\ooalign{\box\tw@ \cr \box\z@}}}


\newcount\foreachcount

\def\foreachletter#1#2#3{\foreachcount=#1
  \ea\loop\ea\ea\ea#3\@alph\foreachcount
  \advance\foreachcount by 1
  \ifnum\foreachcount<#2\repeat}

\def\foreachLetter#1#2#3{\foreachcount=#1
  \ea\loop\ea\ea\ea#3\@Alph\foreachcount
  \advance\foreachcount by 1
  \ifnum\foreachcount<#2\repeat}

\def\definescr#1{\ea\gdef\csname s#1\endcsname{\ensuremath{\mathscr{#1}}\xspace}}
\foreachLetter{1}{27}{\definescr}
\def\definecal#1{\ea\gdef\csname c#1\endcsname{\ensuremath{\mathcal{#1}}\xspace}}
\foreachLetter{1}{27}{\definecal}
\def\definebold#1{\ea\gdef\csname b#1\endcsname{\ensuremath{\mathbf{#1}}\xspace}}
\foreachLetter{1}{27}{\definebold}
\def\definebb#1{\ea\gdef\csname l#1\endcsname{\ensuremath{\mathbb{#1}}\xspace}}
\foreachLetter{1}{27}{\definebb}
\def\definefrak#1{\ea\gdef\csname f#1\endcsname{\ensuremath{\mathfrak{#1}}\xspace}}
\foreachletter{1}{9}{\definefrak} 
\foreachletter{10}{27}{\definefrak}
\def\definebar#1{\ea\gdef\csname #1bar\endcsname{\ensuremath{\overline{#1}}\xspace}}
\foreachLetter{1}{27}{\definebar}
\foreachletter{1}{8}{\definebar} 
\foreachletter{9}{15}{\definebar} 
\foreachletter{16}{27}{\definebar}
\def\definetil#1{\ea\gdef\csname #1til\endcsname{\ensuremath{\widetilde{#1}}\xspace}}
\foreachLetter{1}{27}{\definetil}
\foreachletter{1}{27}{\definetil}
\def\definehat#1{\ea\gdef\csname #1hat\endcsname{\ensuremath{\widehat{#1}}\xspace}}
\foreachLetter{1}{27}{\definehat}
\foreachletter{1}{27}{\definehat}
\def\definechk#1{\ea\gdef\csname #1chk\endcsname{\ensuremath{\widecheck{#1}}\xspace}}
\foreachLetter{1}{27}{\definechk}
\foreachletter{1}{27}{\definechk}
\def\defineul#1{\ea\gdef\csname u#1\endcsname{\ensuremath{\underline{#1}}\xspace}}
\foreachLetter{1}{27}{\defineul}
\foreachletter{1}{27}{\defineul}

\def\autofmt@n#1\autofmt@end{\mathrm{#1}}
\def\autofmt@b#1\autofmt@end{\mathbf{#1}}
\def\autofmt@l#1#2\autofmt@end{\mathbb{#1}\mathsf{#2}}
\def\autofmt@c#1#2\autofmt@end{\mathcal{#1}\mathit{#2}}
\def\autofmt@s#1#2\autofmt@end{\mathscr{#1}\mathit{#2}}
\def\autofmt@f#1\autofmt@end{\mathsf{#1}}
\def\autofmt@u#1\autofmt@end{\underline{\smash{\mathsf{#1}}}}
\def\autofmt@U#1\autofmt@end{\underline{\underline{\smash{\mathsf{#1}}}}}
\def\autofmt@h#1\autofmt@end{\widehat{#1}}
\def\autofmt@r#1\autofmt@end{\overline{#1}}
\def\autofmt@t#1\autofmt@end{\widetilde{#1}}
\def\autofmt@k#1\autofmt@end{\check{#1}}

\def\auto@drop#1{}
\def\autodef#1{\ea\ea\ea\@autodef\ea\ea\ea#1\ea\auto@drop\string#1\autodef@end}
\def\@autodef#1#2#3\autodef@end{%
  \ea\def\ea#1\ea{\ea\ensuremath\ea{\csname autofmt@#2\endcsname#3\autofmt@end}\xspace}}
\def\autodefs@end{blarg!}
\def\autodefs#1{\@autodefs#1\autodefs@end}
\def\@autodefs#1{\ifx#1\autodefs@end%
  \def\autodefs@next{}%
  \else%
  \def\autodefs@next{\autodef#1\@autodefs}%
  \fi\autodefs@next}


\DeclareSymbolFont{bbold}{U}{bbold}{m}{n}
\DeclareSymbolFontAlphabet{\mathbbb}{bbold}
\newcommand{\bbDelta}{\ensuremath{\mathbbb{\Delta}}\xspace}
\newcommand{\bbone}{\ensuremath{\mathbbb{1}}\xspace}
\newcommand{\bbtwo}{\ensuremath{\mathbbb{2}}\xspace}
\newcommand{\bbthree}{\ensuremath{\mathbbb{3}}\xspace}




\mdef\delbar{\overline{\partial}}

\newcommand{\dual}[1]{D{#1}}
\newcommand{\dualr}[1]{D_r{#1}}
\newcommand{\duall}[1]{D_\ell{#1}}
\mdef\hf{\textstyle\frac12 }
\mdef\thrd{\textstyle\frac13 }
\mdef\qtr{\textstyle\frac14 }

\newcommand{\op}{^{\mathrm{op}}}

\SelectTips{cm}{}
\newdir{ >}{{}*!/-10pt/@{>}}    
\newcommand{\pushoutcorner}[1][dr]{\save*!/#1+1.4pc/#1:(1,-1)@^{|-}\restore}
\newcommand{\pullbackcorner}[1][dr]{\save*!/#1-1.2pc/#1:(-1,1)@^{|-}\restore}

\mdef\Id{\mathrm{Id}}
\mdef\id{\mathrm{id}}
\alwaysmath{ell}
\alwaysmath{infty}
\alwaysmath{odot}
\def\frc#1/#2.{\frac{#1}{#2}}   
\mdef\ten{\mathrel{\otimes}}

\mdef\sqten{\mathrel{\boxtimes}}

\DeclareRobustCommand\widecheck[1]{{\mathpalette\@widecheck{#1}}}
\def\@widecheck#1#2{%
    \setbox\z@\hbox{\m@th$#1#2$}%
    \setbox\tw@\hbox{\m@th$#1%
       \widehat{%
          \vrule\@width\z@\@height\ht\z@
          \vrule\@height\z@\@width\wd\z@}$}%
    \dp\tw@-\ht\z@
    \@tempdima\ht\z@ \advance\@tempdima2\ht\tw@ \divide\@tempdima\thr@@
    \setbox\tw@\hbox{%
       \raise\@tempdima\hbox{\scalebox{1}[-1]{\lower\@tempdima\box
\tw@}}}%
    {\ooalign{\box\tw@ \cr \box\z@}}}


\DeclareMathOperator\colim{colim}

\newcommand{\too}[1][]{\ensuremath{\overset{#1}{\longrightarrow}}}
\newcommand{\ot}{\ensuremath{\leftarrow}}

\let\toto\rightrightarrows
\let\into\hookrightarrow

\mdef\we{\overset{\sim}{\longrightarrow}}
\mdef\leftwe{\overset{\sim}{\longleftarrow}}



\let\xto\xrightarrow
\let\xot\xleftarrow
\def\rightarrowtailfill@{\arrowfill@{\Yright\joinrel\relbar}\relbar\rightarrow}
\newcommand\xrightarrowtail[2][]{\ext@arrow 0055{\rightarrowtailfill@}{#1}{#2}}

\def\twoheadrightarrowfill@{\arrowfill@{\relbar\joinrel\relbar}\relbar\twoheadrightarrow}
\newcommand\xtwoheadrightarrow[2][]{\ext@arrow 0055{\twoheadrightarrowfill@}{#1}{#2}}


\def\slashedarrowfill@#1#2#3#4#5{%
  $\m@th\thickmuskip0mu\medmuskip\thickmuskip\thinmuskip\thickmuskip
   \relax#5#1\mkern-7mu%
   \cleaders\hbox{$#5\mkern-2mu#2\mkern-2mu$}\hfill
   \mathclap{#3}\mathclap{#2}%
   \cleaders\hbox{$#5\mkern-2mu#2\mkern-2mu$}\hfill
   \mkern-7mu#4$%
}
\def\rightslashedarrowfill@{%
  \slashedarrowfill@\relbar\relbar\mapstochar\rightarrow}
\newcommand\xslashedrightarrow[2][]{%
  \ext@arrow 0055{\rightslashedarrowfill@}{#1}{#2}}
\mdef\hto{\xslashedrightarrow{}}
\mdef\htoo{\xslashedrightarrow{\quad}}




\def\toiso{\xto{\smash{\raisebox{-.5mm}{$\scriptstyle\sim$}}}}


\long\def\my@drawfill#1#2;{%
\@skipfalse
\fill[#1,draw=none] #2;
\@skiptrue
\draw[#1,fill=none] #2;
}
\newif\if@skip
\newcommand{\skipit}[1]{\if@skip\else#1\fi}
\newcommand{\drawfill}[1][]{\my@drawfill{#1}}



\newif\ifhyperref
\@ifpackageloaded{hyperref}{\hyperreftrue}{\hyperreffalse}
\iftac
  \let\your@state\state
  \def\state#1{\gdef\currthmtype{#1}\your@state{#1}}
  \let\your@staterm\staterm
  \def\staterm#1{\gdef\currthmtype{#1}\your@staterm{#1}}
  \let\defthm\newtheorem
  \def\currthmtype{}
  \ifhyperref
    \def\autoref#1{\ref*{label@name@#1}~\ref{#1}}
  \else
    \def\autoref#1{\ref{label@name@#1}~\ref{#1}}
  \fi
  \AtBeginDocument{%
    \let\old@label\label%
    \def\label#1{%
      {\let\your@currentlabel\@currentlabel%
        \edef\@currentlabel{\currthmtype}%
        \old@label{label@name@#1}}%
      \old@label{#1}}
  }
\else
  \ifhyperref
    \def\defthm#1#2{%
      \newtheorem{#1}{#2}[section]%
      \expandafter\def\csname #1autorefname\endcsname{#2}%
      \expandafter\let\csname c@#1\endcsname\c@thm}
  \else
    \def\defthm#1#2{\newtheorem{#1}[thm]{#2}}
    \ifx\SK@label\undefined\let\SK@label\label\fi
    \let\old@label\label
    \let\your@thm\@thm
    \def\@thm#1#2#3{\gdef\currthmtype{#3}\your@thm{#1}{#2}{#3}}
    \def\currthmtype{}
    \def\label#1{{\let\your@currentlabel\@currentlabel\def\@currentlabel%
        {\currthmtype~\your@currentlabel}%
        \SK@label{#1@}}\old@label{#1}}
    \def\autoref#1{\ref{#1@}}
  \fi
\fi

\newtheorem{thm}{Theorem}[section]

\defthm{cor}{Corollary}
\defthm{prop}{Proposition}
\defthm{lem}{Lemma}
\defthm{sch}{Scholium}
\defthm{assume}{Assumption}
\defthm{claim}{Claim}
\defthm{conj}{Conjecture}
\defthm{hyp}{Hypothesis}
\defthm{fact}{Fact}
\iftac\theoremstyle{plain}\else\theoremstyle{definition}\fi
\defthm{defn}{Definition}
\defthm{notn}{Notation}
\iftac\theoremstyle{plain}\else\theoremstyle{remark}\fi
\defthm{rmk}{Remark}
\defthm{eg}{Example}
\defthm{egs}{Examples}
\defthm{ex}{Exercise}
\defthm{ceg}{Counterexample}
\defthm{warn}{Warning}

\def\thmqedhere{\expandafter\csname\csname @currenvir\endcsname @qed\endcsname}


\setitemize[1]{leftmargin=2em}
\setenumerate[1]{leftmargin=*}

\iftac
  \let\c@equation\c@subsection
\else
  \let\c@equation\c@thm
\fi
\numberwithin{equation}{section}

\@ifpackageloaded{mathtools}{\mathtoolsset{showonlyrefs,showmanualtags}}{}

\alwaysmath{alpha}
\alwaysmath{beta}
\alwaysmath{gamma}
\alwaysmath{Gamma}
\alwaysmath{delta}
\alwaysmath{Delta}
\alwaysmath{epsilon}
\mdef\ep{\varepsilon}
\alwaysmath{zeta}
\alwaysmath{eta}
\alwaysmath{theta}
\alwaysmath{Theta}
\alwaysmath{iota}
\alwaysmath{kappa}
\alwaysmath{lambda}
\alwaysmath{Lambda}
\alwaysmath{mu}
\alwaysmath{nu}
\alwaysmath{xi}
\alwaysmath{pi}
\alwaysmath{rho}
\alwaysmath{sigma}
\alwaysmath{Sigma}
\alwaysmath{tau}
\alwaysmath{upsilon}
\alwaysmath{Upsilon}
\alwaysmath{phi}
\alwaysmath{Pi}
\alwaysmath{Phi}
\mdef\ph{\varphi}
\alwaysmath{chi}
\alwaysmath{psi}
\alwaysmath{Psi}
\alwaysmath{omega}
\alwaysmath{Omega}

\makeatother


\UseAllTwocells
\tikzset{lab/.style={auto,font=\scriptsize}} 
\usetikzlibrary{arrows}

\newcommand{\homr}[3]{#2\mathrel{\rhd_{[#1]}} #3}
\newcommand{\homl}[3]{#3 \mathrel{\lhd_{[#1]}} #2}
\newcommand{\homre}[2]{#1\mathrel{\rhd} #2}
\newcommand{\homle}[2]{#2 \mathrel{\lhd} #1}
\newcommand{\homrbare}{\rhd}
\newcommand{\homrbareop}{\rhd\op}
\newcommand{\homlbare}{\lhd}
\newcommand{\homlbareop}{\lhd\op}
\newcommand{\homrbaree}[1]{\rhd_{[#1]}}
\newcommand{\homlbaree}[1]{\lhd_{[#1]}}

\newcommand{\homreop}[2]{#1\mathrel{\rhd\op} #2}
\newcommand{\homleop}[2]{#2 \mathrel{\lhd\op} #1}

\newcommand{\tr}{\ensuremath{\operatorname{tr}}}
\newcommand{\tw}{\ensuremath{\operatorname{tw}}}
\newcommand{\D}{\sD}
\newcommand{\E}{\sE}

\newcommand{\symm}{\mathfrak{s}}

\autodefs{\cDER\cCat\cCAT\cMONCAT\bSet\cProf\bCat\cPro\cMONDER\cBICAT\ncomp\nid\sProf\ncyl\fC\fZ\fT\nhocolim\nholim\cPsNat\ncoll}
\def\cPDER{\ensuremath{\mathcal{PD}\mathit{ER}}\xspace}

\hyphenation{pre-deriv-ator}
\hyphenation{pre-deriv-ators}
\hyphenation{co-refl-ect-ive}
\hyphenation{auto-mor-ph-ism}
\hyphenation{mon-oi-dal}

\def\ho{\mathscr{H}\!\mathit{o}\xspace}

\let\oast\varoast

\let\oldboxtimes\boxtimes
\def\boxtimes{\mathrel{\oldboxtimes}}

\newcommand{\fib}{\mathsf{fib}}
\newcommand{\cof}{\mathsf{cof}}

\def\shift#1#2{{#1}^{#2}}

\def\DT#1#2#3#4#5#6#7{%
  \xymatrix{{#1} \ar[r]^-{#2} &
    {#3} \ar[r]^-{#4} &
    {#5} \ar[r]^-{#6} &
    {#7}
  }}

\def\DTl#1#2#3#4#5#6#7{%
  \xymatrix@C=3pc{{#1} \ar[r]^-{#2} &
    {#3} \ar[r]^-{#4} &
    {#5} \ar[r]^-{#6} &
    {#7}
  }}

\newsavebox{\tvabox}
\savebox\tvabox{\hspace{1mm}\begin{tikzpicture}[>=latex',baseline={(0,-.18)}]
  \draw[->] (0,.1) -- +(1,0);
  \node at (.5,0) {$\scriptscriptstyle\bot$};
  \draw[->] (1,-.1) -- +(-1,0);
  \draw[->] (1,-.2) -- +(-1,0);
\end{tikzpicture}\hspace{1mm}}
\newcommand{\tva}{\usebox{\tvabox}}

\title{The additivity of traces in monoidal derivators}
\author{Moritz Groth, Kate Ponto and Michael Shulman}
\thanks{The first author was partially supported by the Deutsche Forschungsgemeinschaft within the graduate program 
`Homotopy and Cohomology' (GRK 1150) and by the Dutch Science Foundation (NWO).  The second author was partially 
supported by NSF grant DMS-1207670. The third author was partially supported by an NSF postdoctoral fellowship and 
NSF grant DMS-1128155, and appreciates the hospitality of the University of Kentucky.
Any opinions, findings, and conclusions or recommendations expressed in this material are those of the authors and 
do not necessarily reflect the views of the National Science Foundation.}
\date{\today}
\begin{document}

\begin{abstract}
  Motivated by traces of matrices and Euler characteristics of topological spaces, we expect abstract traces in a symmetric monoidal category to be ``additive''.
  When the category is ``stable'' in some sense, additivity along cofiber sequences is a question about the interaction of stability and the monoidal structure.

  May proved such an additivity theorem when the stable structure is a triangulation, based on new axioms for monoidal triangulated categories.
  In this paper we use stable derivators instead, which are a different model for ``stable homotopy theories''.
  We define and study monoidal structures on derivators, providing a context to describe the interplay between stability 
  and monoidal structure using only ordinary category theory and universal properties.
  We can then perform May's proof of the additivity of traces in a closed monoidal stable derivator without needing extra axioms, 
  as all the needed compatibility is automatic.
\end{abstract}

\maketitle

\setcounter{tocdepth}{1}
\tableofcontents

\section{Introduction}

The Euler characteristic for finite CW complexes is
determined by its value on a point and its additivity on subcomplexes:
if $A$ is a subcomplex of $X$, then 
\[\chi(A)+\chi(X/A)=\chi(X).\]
For generalizations of the Euler characteristic, such as the Lefschetz number of an endomorphism 
of a finite CW complex, axiomatizations are more complicated (see~\cite{ab:lefschetz, b:lefschetz}); 
but a similar additivity on subcomplexes is an essential component.

The classical Euler characteristic can also be generalized to Euler characteristics for dualizable
objects in a symmetric monoidal category---and further, to traces of (twisted) endomorphisms of such, which include Lefschetz numbers and topological transfers.
In this generality, many fundamental properties are easy to prove, but additivity is not among them.
Despite this, for most specific examples, additivity remains important.

In this paper, we will show that if our symmetric monoidal category has the additional structure of 
a \emph{stable derivator} (see below), then the Euler characteristic is automatically additive, relative to the appropriate generalization of subcomplexes.
Similarly, if an endomorphism of a dualizable object $X$ restricts to a ``subcomplex'' $A$ of $X$, then its trace 
on $X$ is the sum of the traces of the induced endomorphisms of $A$ and of $X/A$.
Using a suggestive notation, we write this as
\[ \tr(\phi_A) + \tr(\phi_{X/A}) = \tr(\phi_X). \]

Essentially this same theorem, in slightly less generality, was proven by~\cite{may:traces}, who worked with triangulated categories and stable model categories.
The point of this paper is not so much the truth of the additivity theorem, which has been known since~\cite{may:traces}, 
but the fact that derivators are a convenient context in which to state it, prove it, and generalize it.
More generally, derivators are a convenient context to study the interaction between stability and monoidal structure.

The first descriptions of the interactions between stability and monoidal structure the authors are 
aware of were given by Margolis \cite{margolis:spectra} and Hovey-Palmieri-Strickland \cite{HPS:axiomatic}.  
They focused on biexactness properties of (closed) symmetric monoidal structures and the sign issue related to permutations of sphere coordinates. 
With the goal of proving additivity of traces, May \cite{may:traces} augmented the axioms of~\cite{HPS:axiomatic} 
with three more compatibility conditions relating to pushout products.
These ideas were further developed in \cite{kn:quiver}. Finally, the first author and Jan \v{S}\v{t}ov\'{i}\v{c}ek indicate in \cite{gs:tilting} a relation between the approach offered here and the one of Keller--Neeman \cite{kn:quiver}.

\subsection{Why derivators?}
\label{sec:why-derivators}

Intuitively, an additivity theorem for traces should be true in any ``stable homotopy theory''.
(The case of ordinary Euler characteristics and Lefschetz numbers corresponds to the classical stable homotopy theory of spectra.)
There are various formal axiomatizations of ``stable homotopy theories'': in addition to stable derivators, one 
may consider stable model categories, stable $(\infty,1)$-categories, or triangulated categories.
Derivators are less well-known than these other options, but we find that they have many advantages.
For the purpose of the sort of abstract arguments we will need in this paper, each of the other notions contains either \emph{too little} information 
(triangulated categories) or \emph{too much} (stable model categories and stable $(\infty,1)$-categories) when compared with derivators.

The fact that triangulated categories often contain too little information is well-known, and was demonstrated in~\cite{may:traces}.
The major contribution of \emph{ibid.}\ was to describe ``compatibility axioms'' between a triangulation and a 
monoidal structure, which hold in the homotopy category of any stable, monoidal, enriched model category, and which imply the additivity of Euler characteristics.
Unfortunately, these axioms do not imply the more general result for \emph{traces} 
(which in fact fails to be true for \emph{some} morphisms between distinguished triangles~\cite{ferrand:nonadd}).

One of the main obstacles to proving additivity is that a triangulation
is merely a ``remnant'' of the homotopy theory visible in the homotopy category, so
its axioms assert that certain objects and morphisms \emph{exist} but do not characterize them uniquely.
For instance, any morphism $A\to X$ (regarded as a ``subcomplex inclusion'') can be extended to some distinguished triangle, 
whose third object may be regarded as the ``quotient'' $X/A$, but the quotient map $X\to X/A$ is not uniquely determined.
In examples, there are particular good choices of these objects and morphisms arising from \emph{homotopy limit} 
and \emph{colimit} constructions, but their universal properties are not visible to the triangulated category.
Therefore, new axioms which build on previous ones are often very complicated,
such as May's axiom (TC5b), and this approach becomes unwieldy when generalizing further. 

Due to these problems, May proved additivity for traces (as opposed to Euler characteristics) by working 
directly with a stable monoidal model category, where homotopy limits and colimits can be constructed 
using ordinary limits and colimits along with fibrations and cofibrations.
For instance, if we represent a subcomplex inclusion by a cofibration $A\to X$, then its ``quotient'' in 
the sense of stable homotopy theory is just its ordinary category-theoretic quotient $X/A$ in the model category.
However, the fact that model categories contain too \emph{much} information for this sort of proof is also visible in May's 
argument, which requires careful management of fibrant and cofibrant replacement functors.

One way to avoid these problems is the modern theory of $(\infty,1)$-categories, where homotopy limits 
and colimits have $(\infty,1)$-categorical universal properties, with no need for fibrant and cofibrant 
replacements, and all equivalences are invertible.
This makes for a beautiful and powerful theory, but quite a complicated one, as witnessed by the 
length of~\cite{lurie:higher-topoi,lurie:ha}.
Additionally, one must deal directly with homotopy coherence, which can be quite tedious and obscure the important ideas.

While these challenges are certainly not  insurmountable, \emph{derivators} provide a very effective way to sidestep them.
Derivators were developed independently (under various names) by Grothendieck~\cite{grothendieck:derivateurs}, 
Heller~\cite{heller:htpythys}, and Franke~\cite{franke:triang}, and have been studied further by Maltsiniotis~\cite{m:introder}, 
Cisinski~\cite{cisinski:idcm}, and the present authors~\cite{groth:ptstab,gps:stable}; see the website~\cite{grothendieck:derivateurs} for a comprehensive bibliography. 

A derivator is the ``minimal modification'' of a homotopy category which remedies the problem of homotopy limits and colimits.
In addition to a homotopy category, a derivator remembers information about ``homotopy coherent diagrams'': 
for each small category $A$ we have a category $\D(A)$, regarded as the homotopy category of $A$-shaped homotopy coherent diagrams in \D.
These categories are related by restriction and left and right (homotopy) Kan extension functors, 
which characterize homotopy limits and colimits by ordinary universal properties; thus May's 
compatibility \emph{axioms} for triangulated monoidal categories become \emph{lemmas} about stable monoidal derivators.
On the other hand, there are no weak equivalences or fibrant/cofibrant replacements: homotopy meaningfulness is already built in by passage to homotopy categories.
Finally, derivators require only ordinary category-theoretic machinery, and form a well-behaved 2-category in which 2-categorical constructions are homotopically meaningful.

Since model categories give rise to derivators, our results about derivators apply \textit{a fortiori} to model categories as well.
Specifically, Cisinski~\cite{cisinski:idcm} showed that the homotopy category of any model category can be enhanced to a derivator
(an easier proof for combinatorial model categories can be found in~\cite{groth:ptstab}).
In this paper we will show (\autoref{thm:mmc-cmder-noncomb}) that this extends to monoidal structures: the homotopy derivator of a (cofibrantly generated) monoidal model category is a closed monoidal derivator.
In~\cite{gps:stable} we sketched a proof that complete and cocomplete $(\infty,1)$-categories give rise to derivators; we expect that monoidal structures also carry through this construction, but that has not yet been verified.
(Note, though, that all locally presentable $(\infty,1)$-categories --- which includes most of those arising in practice --- can be presented by a combinatorial model category, and in many cases a monoidal structure can also be presented on the model-categorical level.)
But modulo this gap, we can regard derivators as merely a convenient abstract context in which to express a 
\emph{calculus of homotopy Kan extensions} that is valid in any model category or $(\infty,1)$-category.

\subsection{An outline of this paper}
\label{sec:outline}

In \S\ref{sec:derivators} we recall the definition of stable derivators, provide examples and establish notation.
Our conventions follow~\cite{gps:stable}, and we will quote many results from \textit{ibid.}\ 
and~\cite{groth:ptstab}, but most of the present paper should be readable independently.

In \S\ref{sec:monoidal-derivators} we introduce the first of the two main new definitions of this paper: 
\emph{monoidal derivators}, and more generally cocontinuous two-variable morphisms of derivators.
In \S\ref{sec:stablemonoidal} we describe the interaction of monoidal structure with suspensions and 
cofiber sequences; this corresponds to May's axioms (TC1) and the first half of (TC2).
Then from cofiber sequences we generalize to homotopy pushouts, which requires a preliminary 
discussion of ends and coends and their application to tensor products of functors in \S\ref{sec:ends-coends}.
Using this, we discuss pushout products in \S\S \ref{sec:mon-triang}--\ref{sec:rotated}, and prove May's axioms (TC3) and (TC4).

The second of our new definitions is that of a \emph{closed} monoidal derivator, and more generally that of a two-variable adjunction of derivators.
We introduce this in \S\S\ref{sec:tvas}--\ref{sec:cycling-tvas}, prove the second half of (TC2), and also prove that monoidal model categories give rise to closed monoidal derivators.
Then in \S\S\ref{sec:duality}--\ref{prof-dual} we define \emph{duality} in a closed monoidal 
derivator and in its bicategory of profunctors, and prove versions of May's axiom (TC5).
Finally, in \S\ref{sec:introadditivity} we complete the proof of the additivity of traces, closely following May's.

In the appendices we complete a few proofs that are more tedious and require familiarity 
with more of the machinery developed in~\cite{groth:ptstab,gps:stable}: in Appendix~\ref{sec:coends} 
we give two additional characterizations of coends in derivators, and in Appendix~\ref{sec:bicatunit} 
we complete the construction of the bicategory of profunctors in a monoidal derivator.

\section{Review of derivators}
\label{sec:derivators}

In this section we recall some basic definitions and facts from the theory of derivators and fix some notation and terminology.
Let \cCat and \cCAT denote the 2-categories of small and large categories, respectively.

\begin{defn}
  A \textbf{prederivator} is a 2-functor $\D\colon\cCat\op\to\cCAT$.
\end{defn}

For a functor $u\colon A\to B$, we write $u^*\colon\D(B) \to \D(A)$ for its image under the 2-functor \D, and refer to it as \textbf{restriction} along $u$.
If $\bbone$ denotes the terminal category, we call $\D(\bbone)$ the \textbf{underlying category} of \D.

We refer to the objects of $\D(A)$ as \textbf{(coherent, $A$-shaped) diagrams} in \D.
Any such $X\in\D(A)$ has an \textbf{underlying (incoherent) diagram}, which is an ordinary diagram in 
$\D(\bbone)$, i.e.\ an object of the functor category $\D(\bbone)^A$.
For each $a\in A$, the underlying diagram of $X$ sends $a$ to $a^*X$.
(Here $a$ also denotes the functor $a\colon\bbone\to A$ whose value on the object of $\bbone$ is $a$.)
We may also write $X_a$ for $a^*X$.

We will occasionally refer to a coherent diagram as having the \textbf{form} of, or \textbf{looking like}, its 
underlying diagram, and proceed to draw that underlying diagram using objects and arrows in the usual way.
It is very important to note, though, that a coherent diagram is not determined by its underlying diagram, not even up to isomorphism.

\begin{eg}
  Any (possibly large) category \bC gives rise to a \emph{represented} prederivator~$y(\bC)$ defined by
  \[ y(\bC)(A) \coloneqq \bC^A. \]
  Its underlying category is \bC itself.
\end{eg}

The preceding example is useful to think about when generalizing from ordinary categories to derivators, but the next two examples are those of primary interest.

\begin{eg}
  Suppose \bC is a category equipped with a class \bW of ``weak equivalences''; for instance, \bC could be a \emph{Quillen model category} (see e.g.~\cite{hovey:modelcats}).
  We write $\bC[\bW^{-1}]$ for the category obtained by formally inverting the weak equivalences.
  If $A$ is a small category, then $\bC^A$ also has a class of weak equivalences, namely the pointwise ones $\bW^A$.
  We then have the \emph{derived} or \emph{homotopy prederivator} $\ho(\bC)$ defined by
  \[ \ho(\bC)(A) \coloneqq (\bC^A)[(\bW^A)^{-1}]. \]
  Its underlying category is $\bC[\bW^{-1}]$.
\end{eg}

\begin{eg}
  Suppose \cC is an $(\infty,1)$-category (e.g.\ a quasicategory as in~\cite{lurie:higher-topoi,joyal}); then it has a homotopy category $h\cC$ obtained by identifying equivalent 1-morphisms.
  Any small category $A$ may be regarded as an $(\infty,1)$-category, so that we have a functor $(\infty,1)$-category $\cC^A$.
  We then have the \emph{homotopy prederivator} $\ho(\cC)$ defined by
  \[ \ho(\cC)(A) \coloneqq h(\cC^A). \]
  Its underlying category is $h\cC$.
\end{eg}

A prederivator remembers just enough information about coherent diagrams that we can characterize homotopy limits and colimits by ordinary universal properties.
In fact, it is convenient to consider all homotopy Kan extensions as well.
In ordinary category theory, Kan extensions can be constructed ``pointwise'' out of limits and colimits, but in the world of derivators this is not the case.
The closest we can come is to ask such Kan extensions to satisfy a ``pointwise-ness'' property, ensuring that the objects occurring in their underlying diagrams are given by the appropriate homotopy limits and colimits.

This leads to the definition of a \emph{derivator}: a prederivator which has all pointwise homotopy 
Kan extensions, and which moreover satisfies some straightforward stack-like properties.

\begin{defn}
  A \textbf{derivator} is a prederivator \D with the following properties.
  \begin{itemize}[leftmargin=4em]
  \item[(Der1)] $\D\colon \cCat\op\to\cCAT$ takes coproducts to products.  In particular, $\D(\emptyset)$ is the terminal category.
  \item[(Der2)] For any $A\in\cCat$, the family of functors $a^*\colon \D(A) \to\D(\bbone)$, as $a$ ranges over the objects of $A$, is jointly conservative (isomorphism-reflecting).
  \item[(Der3)] Each functor $u^*\colon \D(B) \to\D(A)$ has both a left adjoint $u_!$ and a right adjoint $u_*$.
  \item[(Der4)] For any functors $u\colon A\to C$ and $v\colon B\to C$ in \cCat, let $(u/v)$ denote their comma category, with projections $p\colon(u/v) \to A$ and $q\colon(u/v)\to B$.
    If $B=\bbone$ is the terminal category, then the canonical transformation
    \[ q_! p^* \to q_! p^* u^* u_! \to q_! q^* v^* u_! \to v^* u_! \]
    is an isomorphism.
    Similarly, if $A=\bbone$ is the terminal category, then the canonical transformation
    \[ u^* v_* \to p_* p^* u^* v_* \to p_* q^* v^* v_* \to p_* q^* \]
    is an isomorphism.
  \end{itemize}
  We say that a derivator is \textbf{strong} if it satisfies:
  \begin{itemize}[leftmargin=4em]
  \item[(Der5)] For any $A$, the induced functor $\D(A\times \bbtwo) \to \D(A)^\bbtwo$ is full and essentially surjective, 
   where $\bbtwo=(0\to 1)$ is the category with two objects and one nonidentity arrow between them.
  \end{itemize}
\end{defn}

Following the established terminology of the theory of~$(\infty,1)$-categories, we refer to the functors $u_!$ and $u_*$ in (Der3) simply as \textbf{left} and \textbf{right Kan extensions}, respectively, as opposed to calling them \emph{homotopy} Kan extensions.
This is meant to simplify the terminology and does not result in a risk of ambiguity since actual ``categorical'' Kan extensions are meaningless in the context of an abstract derivator.
Similarly, when $B$ is the terminal category $\bbone$, we call them \textbf{colimits} and \textbf{limits}.
Axiom (Der4) expresses the fact that Kan extensions can be computed ``pointwise'' from limits and colimits, as in~\cite[X.3.1]{maclane}.

Note that (Der1) and (Der3) together imply that each category $\D(A)$ has (actual) small coproducts and products, including an initial and terminal object.

\begin{egs}
  A large category \bC is both complete and cocomplete if and only if its represented prederivator $y(\bC)$ is a derivator.
  If \bC is a model category, then $\ho(\bC)$ is a derivator (see~\cite{cisinski:idcm,groth:ptstab}); 
  the functors $u_!$ and $u_*$ in $\ho(\bC)$ are the left and right derived functors, respectively, of the corresponding functors in $y(\bC)$.
  And if \cC is a complete and cocomplete $(\infty,1)$-category, then $\ho(\cC)$ is a derivator; see~\cite{gps:stable} for a sketch of a proof.
  All of these derivators are strong.
\end{egs}

\begin{egs}\label{eg:shifted}
  For any derivator \D and category $B\in\cCat$, we have a ``shifted'' derivator $\shift\D B$ defined by $\shift\D B(A) \coloneqq \D(B\times A)$ \cite{groth:ptstab}.
  This is technically very convenient: it enables us to ignore extra ``parameter'' categories $B$ by shifting them into the (universally quantified) derivator under consideration.
  Note that $\shift{y(\bC)}{B} \cong y(\bC^B)$ and $\shift{\ho(\bC)}{B} \cong \ho(\bC^B)$. 
  
  \label{eg:opposite}
  Any derivator \D has an \emph{opposite} derivator, defined by $\D\op(A) \coloneqq \D(A\op)\op$.
  Note that $y(\bC)\op = y(\bC\op)$ and $\ho(\bC\op) = \ho(\bC)\op$ and $(\shift\D B)\op = \shift{(\D\op)}{B\op}$.

  Both $\shift\D B$ and $\D\op$ are strong if \D is.
\end{egs}

In this paper we are particularly concerned with \emph{stable} derivators, which represent ``stable'' homotopy theories such as those of spectra and unbounded chain complexes.
We recall the relevant basic definitions from~\cite{groth:ptstab,gps:stable}.

\begin{defn}
  A derivator is \textbf{pointed} if $\D(\bbone)$ has a zero object (an object which is both initial and terminal).
\end{defn}

In a pointed derivator, we expect to be able to define the \emph{cofiber} of a map $f\colon x\to y$ by ``taking the pushout of $f$ along the unique map $x\to 0$'', yielding a map with domain $y$.
\begin{equation}\label{eq:cofibersquare}
  \vcenter{\xymatrix@-.5pc{
      x\ar[r]^f\ar[d] &
      y\ar[d]^{\cof(f)}\\
      0\ar[r] &
      z. \pushoutcorner
      }}
\end{equation}
This is how we would describe the operation in an ordinary category, but in a derivator we have to be more careful to keep track of coherence of diagrams.
First of all, if $f$ is a morphism in $\D(\bbone)$, then we have to ``lift'' it to an object of $\D(\bbtwo)$ --- this is what axiom (Der5) allows us to do.
Second, we have to obtain from $f\in \D(\bbtwo)$ a coherent diagram representing the span of which we want to take the pushout:
\begin{equation}\label{eq:cofiberspan}
  \vcenter{\xymatrix@-.5pc{
      x\ar[r]^f\ar[d] &
      y\\
      0
      }}
\end{equation}
To be precise, let $\Box$ denote the category $\bbtwo\times\bbtwo$, which we write as 
\[\vcenter{\xymatrix@-.5pc{
    (0,0)\ar[r]\ar[d] &
    (0,1)\ar[d]\\
    (1,0)\ar[r] &
    (1,1).
  }}\]
Let $\ulcorner$ and $\lrcorner$ denote the full subcategories $\Box \setminus \{(1,1)\}$ and  $\Box \setminus \{(0,0)\}$, respectively, with inclusions $i_\ulcorner\colon \mathord{\ulcorner} \into\Box$ and $i_\lrcorner\colon \mathord{\lrcorner} \into\Box$.

To obtain~\eqref{eq:cofiberspan} we take the right Kan extension of $f\in\D(\bbtwo)$ along the inclusion $(0,-)\colon \bbtwo \to \mathord\ulcorner$ as the top horizontal arrow.
This is called \emph{extension by zero}, and depends only on the fact that $(0,-)$ is the inclusion of a \emph{sieve} in $\ulcorner$.
We could now take the colimit of this span, i.e.\ take its left Kan extension along the unique functor $\mathord\ulcorner \to\bbone$.
However, this would produce only the object $z$ in~\eqref{eq:cofibersquare}; to obtain the whole square~\eqref{eq:cofibersquare} 
as a coherent diagram, we can instead left Kan extend along the inclusion $i_\ulcorner : \mathord\ulcorner \to\Box$.
(A commutative square that is left Kan extended from $\mathord\ulcorner$ is called \textbf{cocartesian}.)
We can then restrict along the functor $(-,1)\colon \bbtwo\to\Box$.
In other words, the \textbf{cofiber functor} $\cof\colon \D(\bbtwo)\to\D(\bbtwo)$ in a pointed derivator is the composite
\[ \D(\bbtwo) \xto{(0,-)_*} \D(\ulcorner) \xto{(i_{\ulcorner})_!} \D(\Box) \xto{(-,1)^*} \D(\bbtwo). \]

\begin{rmk}
  Of course, there are many verifications necessary in this construction, such as confirming $(0,-)_*$ 
  doesn't change the objects $x$ and $y$ and the morphism~$f$ and that the new object is a zero object; 
  that the domain of $\cof(f)$ is the same as the codomain of $f$; that the colimit of the span~\eqref{eq:cofiberspan} 
  is the same as the object~$z$ in~\eqref{eq:cofibersquare}; and so on.
  This all follows from the calculus of manipulating Kan extensions called \emph{homotopy exactness}, which is described in~\cite{groth:ptstab,gps:stable}.
  For the most part, we will not need to invoke this calculus explicitly in this paper, as it will suffice to quote results from~\cite{groth:ptstab,gps:stable}.
  The only exceptions are the proofs of properties of coends in \autoref{thm:coendadj} and the Appendices.
\end{rmk}

Dually to the cofiber, the \textbf{fiber} of $f\colon x\to y$ in a pointed derivator is obtained by pulling back along the unique map $0\to y$:
\begin{equation}\label{eq:fibersquare}
  \vcenter{\xymatrix@-.5pc{
      w\ar[r]^{\fib(f)}\ar[d] \pullbackcorner &
      x\ar[d]^{f}\\
      0\ar[r] &
      y
      }}
\end{equation}
More precisely, the fiber functor is the composite
\[ \D(\bbtwo) \xto{(-,1)_!} \D(\lrcorner) \xto{(i_{\lrcorner})_*} \D(\Box) \xto{(0,-)^*} \D(\bbtwo). \]
We have an adjunction $(\cof,\fib)\colon \D(\bbtwo) \rightleftarrows \D(\bbtwo)$.  (A commutative square that is right Kan extended from $\lrcorner$ is 
called \textbf{cartesian}.)

The \textbf{suspension} and \textbf{loop space} of an object $x\in \D(\bbone)$ are obtained by pushing out or pulling back over two copies of $0$:
\begin{equation}\label{eq:susploopsquares}
  \vcenter{\xymatrix@-.5pc{
      x\ar[r]\ar[d] &
      0\ar[d]\\
      0\ar[r] &
      \Sigma x \pushoutcorner
    }}
  \qquad
  \vcenter{\xymatrix@-.5pc{
      \Omega x \pullbackcorner \ar[r]\ar[d] &
      0\ar[d]\\
      0\ar[r] &
      x
      }}
\end{equation}
Precisely, the suspension functor is the composite
\[ \D(\bbone) \xto{(0,0)_*} \D(\mathord{\ulcorner}) \xto{(i_\ulcorner)_!} \D(\Box) \xto{(1,1)^*} \D(\bbone) \]
and dually for the loop space.
We have a further adjunction $(\Sigma,\Omega)\colon \D(\bbone) \rightleftarrows \D(\bbone)$.

\begin{rmk}\label{rmk:sign}
  If we restrict a cocartesian square as on the left of~\eqref{eq:susploopsquares} along the automorphism 
  $\sigma\colon\Box\to\Box$ which swaps $(0,1)$ and $(1,0)$, we obtain a \emph{different} cocartesian 
  square (with the same underlying diagram).  This defines an isomorphism of $\Sigma x$ which we denote by ``$-1$''  and think of as multiplication by -1.  This is explained further in \cite[\S5]{gps:stable}.
\end{rmk}

It is shown in~\cite{gps:stable} that the following are equivalent for a pointed derivator:
\begin{enumerate}
\item $\Sigma\dashv \Omega$ is an adjoint equivalence.
\item $\cof\dashv \fib$ is an adjoint equivalence.
\item A coherently commutative square (i.e.\ an object of $\D(\Box)$) is cartesian if and only if it is cocartesian.\label{item:stable3}
\end{enumerate}
A derivator satisfying these conditions is called \textbf{stable}, and a square as in~\ref{item:stable3} is called \textbf{bicartesian}.

In any pointed derivator, the cocartesian square at the left of~\eqref{eq:susploopsquares} can further be extended to a coherent diagram
\begin{equation}
  \vcenter{\xymatrix@-.5pc{
      x\ar[r]^f\ar[d] &
      y\ar[r]\ar[d]^g &
      0\ar[d]\\
      0\ar[r] &
      z\ar[r]_h &
      w,
    }}\label{eq:cofiberseq}
\end{equation}
i.e.\ an object of $\D(\boxbar)$, in which both squares are cocartesian.
A coherent diagram of this form is called a \textbf{cofiber sequence}.
By~\cite[Prop.~3.13]{groth:ptstab}, the outer rectangle is also cocartesian, hence induces an isomorphism $\Sigma x\cong w$.
In a \emph{stable} derivator, we say that the induced string of composable arrows
\begin{equation}
  \DT x f y g z {h'} {\Sigma x}\label{eq:dt}
\end{equation}
(or any string of arrows isomorphic to it) is a {\bf distinguished triangle}.
In \cite[Theorem~4.16]{groth:ptstab} it is shown that this choice of distinguished triangles in a strong, stable derivator 
\D makes $\D(\bbone)$ into a \emph{triangulated category} in the sense of Verdier.
In particular, it is \emph{additive} --- that is, finite products and coproducts coincide, and are written as direct sums 
$x\oplus y$ --- and thus we can add morphisms $f\colon x\to y$ and $g\colon x\to y$ by taking the composite
\begin{equation}
  \vcenter{\xymatrix{
      x\ar[r]^-{\Delta} \ar@(dr,dl)[rrr]_{f+g} &
      x\oplus x\ar[r]^-{f\oplus g} &
      y\oplus y\ar[r]^-{\nabla} &
      y.
      }}
\end{equation}
This way of adding morphisms is, of course, essential to make sense of the additivity of traces.

\section{Monoidal derivators}
\label{sec:monoidal-derivators}

We will start by defining monoidal prederivators.  
For the following definition, recall that a \textbf{pseudonatural transformation} $F\colon \D_1\to \D_2$ consists of functors 
$F_A\colon \D_1(A)\to \D_2(A)$ and isomorphisms 
$F_Bu^*\cong u^*F_A$ for all $u\colon A\to B$, satisfying the obvious axioms.
Prederivators form a 2-category \cPDER, whose morphisms are pseudonatural transformations and whose 2-cells are modifications.
Moreover, it is cartesian monoidal, with products computed pointwise in \cCAT.

\begin{defn}
  A \textbf{monoidal prederivator} is a pseudomonoid object in \cPDER.
\end{defn}

Thus, a monoidal prederivator \D has a product morphism $\otimes\colon \D\times \D\xto{} \D$, a unit
$\lS\colon y(\bbone)\xto{} \D$,
and coherence isomorphisms 
expressing associativity and unitality. 
Since the monoidal structure of \cPDER is pointwise, this is equivalent to asking that \D lift to a 2-functor valued 
in the 2-category \cMONCAT of monoidal categories and strong monoidal functors.
Similarly, we have \textbf{braided} and \textbf{symmetric} monoidal prederivators, and notions of \textbf{strong}, 
\textbf{lax}, and \textbf{colax} monoidal morphisms of monoidal prederivators.

\begin{eg}
  If \bC is a monoidal category, then its represented prederivator $y(\bC)$ is monoidal, with the pointwise product on each category $\bC^A$.
  If \D is a monoidal prederivator, so are $\D\op$ and $\shift\D B$ (since $(-)\op$ and $\shift{(-)}B$ are product-preserving endo-2-functors of \cPDER).
\end{eg}

\begin{eg}\label{thm:mmc-mpder}
  If \bC is a monoidal model category, then its homotopy category $\ho(\bC)(\bbone)$ is a monoidal category by~\cite[4.3.2]{hovey:modelcats}.
  However, it does not automatically follow that $\bC^A$ is a monoidal model category, even when it admits a model structure.
  The pointwise tensor product on $\bC^A$ does preserve weak equivalences between pointwise cofibrant diagrams, 
  and every diagram admits a pointwise weak equivalence from a pointwise cofibrant one (apply a cofibrant replacement functor of \bC pointwise).
  This is sufficient to ensure that the pointwise tensor product has a left derived functor (see e.g.~\cite{dhks:holim}).
  The associativity and unit constraints and axioms can then be constructed exactly as for \bC itself (see~\cite[4.3.2]{hovey:modelcats} or~\cite[Prop.~15.4]{shulman:htpylim}).
  Thus, $\ho(\bC)$ is a monoidal prederivator.
\end{eg}

A \emph{monoidal derivator} is not just a monoidal prederivator which is a derivator; we intend to require that the monoidal product 
is ``cocontinuous'' in each variable separately.
First recall the following definition of cocontinuity for one-variable morphisms of derivators, which is modeled 
on the classical notion of cocontinuity for functors (generalized to Kan extensions).

\begin{defn}
  A morphism of derivators $F\colon \D_1 \to \D_2$ is \textbf{cocontinuous} if for any $u\colon A\to B$ in \cCat, the canonical transformation
  \begin{equation}
    u_! F_A \to u_! F_A u^* u_! \toiso u_! u^* F_B u_! \to F_B u_!\label{eq:cocont-trans}
  \end{equation}
  is an isomorphism.
\end{defn}

The transformation \eqref{eq:cocont-trans} is the composite of unit and counit of the $u_!\dashv u^*$ adjunctions with the 
natural transformation $\alpha$ in the diagram
\[\vcenter{\xymatrix{
    \D_1(B)\ar[r]^{u^*} \ar[d]_{F_B} \drtwocell\omit{\alpha} &
    \D_1(A)\ar[d]^{F_A}\\
    \D_2(B)\ar[r]_{u^*} &
    \D_2(A).
  }}
\]
This is an instance of a general notion in 2-category theory called a \textbf{mate}, which we will use frequently.
See~\cite{ks:r2cats} for a comprehensive theory and~\cite[Appendix A]{gps:stable} for a brief summary.

\begin{warn}\label{warning:intccts-nope}
  There is an obvious generalization of cocontinuity to morphisms of two variables, but it does not work.
  Specifically, given a morphism $\oast\colon\D_1\times\D_2\to\D_3$, objects $X\in\D_1(A),\,Y\in\D_2(B),$ and a functor $u\colon A\to B$, 
  one can form the following canonical mate transformation
  \begin{equation}
    u_!(X\oast_A u^*Y) \to u_!(u^* u_! X \oast_A u^* Y) \toiso u_! u^* (u_! X \oast_B Y) \to u_!X \oast_B Y.\label{eq:int-mder-nope}
  \end{equation}
  However, this morphism will \emph{not} be an isomorphism in the typical examples.
  For example, if $u\colon\bbone\to\bbtwo$ classifies the object~0, then the domain of~\eqref{eq:int-mder-nope} is $X\oast Y_0\to X\oast Y_0$ while its target is $X\oast Y_0\to X\oast Y_1$.
\end{warn}

In order to bypass this problem and define monoidal derivators we first have to define \emph{external variants} of monoidal structures (or, more generally, morphisms of two variables).
As we have defined it, a monoidal structure on a prederivator consists only of \emph{pointwise} or \emph{internal} monoidal product functors, which in the represented case $y(\bC)$ are
\begin{align}
  \otimes_A \colon \bC^A \times \bC^A \to \bC^A
  \qquad\text{defined by}\qquad (X\otimes_A Y)_a = X_a \otimes Y_a.
\end{align}
However, in the represented case there are also \emph{external} monoidal product functors
\begin{align}
  \otimes \colon \bC^A \times \bC^B \to \bC^{A\times B}
  \qquad\text{defined by}\qquad
  (X \otimes Y)_{(a,b)} = X_a \otimes Y_b.
\end{align}
In fact, such external products exist in any monoidal prederivator.
More generally, given any morphism of prederivators $\oast \colon \D_1\times\D_2 \to \D_3$, 
with components denoted $\oast_A\colon\D_1(A) \times \D_2(A) \to \D_3(A)$, we can define a functor
\begin{equation}
  \oast \colon \D_1(A)\times \D_2(B)\xto{} \D_3(A\times B)\label{eq:external}
\end{equation}
to be the composite
\begin{equation}
  \xymatrix@C=3pc{ \D_1(A)\times \D_2(B)\ar[r]^-{\pi_B^* \times \pi_A^*}&
    \D_1(A\times B)\times \D_2(A\times B)
    \ar[r]^-{\oast_{A\times B}}&\D_3(A\times B).}\label{eq:extdef}
\end{equation}
(In general, we write $\pi_A$ to denote a projection functor in which the category $A$ is projected away.
This includes the map $\pi_A\colon A\to \bbone$ to the terminal category, but also projections such as $A\times B\to B$ and $B\times A\to B$.)

The functors~\eqref{eq:external} form a pseudonatural transformation from the 2-functor
\begin{align*}
  (\D_1 \mathrel{\overline\times} \D_2) \colon \cCat\op\times\cCat\op &\too \cCAT\\
  (A,B)&\mapsto \D_1(A) \times \D_2(B)
\end{align*}
to the 2-functor
\begin{align*}
  (\D_3 \circ\mathord\times) \colon \cCat\op\times\cCat\op &\too \cCAT\\
  (A,B) &\mapsto \D_3(A\times B).
\end{align*}
In particular, for $X\in \D_1(A)$ and $Y\in \D_2(B)$ and functors $u\colon A' \to A$ and $v\colon B'\to B$ we have isomorphisms
\begin{equation}
  (u\times v)^*(X\oast Y) \cong u^* X \oast v^* Y\label{eq:extern-psnat}
\end{equation}
which are natural with respect to morphisms in $\D_1(A)$ and  $\D_1(B)$.
Moreover, the internal product $\oast_A$ can be recovered from the external one $\oast$ as the following composite
\[ \xymatrix{ \D_1(A) \times \D_2(A) \ar[r]^-{\oast} & \D_3(A\times A) \ar[r]^-{\Delta_A^*} & \D_3(A)} \]
where $\Delta_A\colon A\to A\times A$ is the diagonal functor of $A$.
More precisely, we have the following theorem.

\begin{thm}
  For prederivators $\D_1$, $\D_2$, and $\D_3$, there is an equivalence
  \begin{equation}
    \cPDER(\D_1\times \D_2, \D_3) \simeq \cPsNat(\D_1 \mathrel{\overline\times} \D_2, \D_3 \circ \mathord\times).
  \end{equation}
\end{thm}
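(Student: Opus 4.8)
The plan is to exhibit quasi-inverse functors between the two categories, realizing the equivalence as the ``lift'' to prederivators of the adjunction $\Delta\dashv\mathord\times$ between $\cCat$ and $\cCat\times\cCat$. In one direction, define $\Phi\colon\cPDER(\D_1\times\D_2,\D_3)\to\cPsNat(\D_1\mathrel{\overline\times}\D_2,\D_3\circ\mathord\times)$ by sending a morphism of prederivators $\oast\colon\D_1\times\D_2\to\D_3$ to its external version, whose $(A,B)$-component is the composite~\eqref{eq:extdef}: the discussion preceding the theorem already records that these components assemble into a pseudonatural transformation with structure isomorphisms~\eqref{eq:extern-psnat}, and a modification between $\oast$ and $\oast'$ is sent to the evident componentwise modification. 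In the other direction, define $\Psi$ by sending $G\colon\D_1\mathrel{\overline\times}\D_2\to\D_3\circ\mathord\times$, with components $G_{A,B}\colon\D_1(A)\times\D_2(B)\to\D_3(A\times B)$, to the morphism of prederivators whose $A$-component is $\Delta_A^*\circ G_{A,A}$ (the composite displayed just before the theorem statement), with modifications again handled by whiskering with $\Delta_A^*$.

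First I would check that $\Psi(G)$ is genuinely a morphism of prederivators. For $u\colon A\to B$ the required structure isomorphism $(\Delta_A^*G_{A,A})\circ(u^*\times u^*)\cong u^*\circ(\Delta_B^*G_{B,B})$ is built by combining the pseudonaturality isomorphism of $G$ along the pair $(u,u)$, namely $G_{A,A}\circ(u^*\times u^*)\cong(u\times u)^*\circ G_{B,B}$, with the identity of functors $(u\times u)\circ\Delta_A=\Delta_B\circ u$ and the coherence constraints of the $2$-functor $\D_3$; one then verifies the two pseudonaturality axioms for $\Psi(G)$ (compatibility with composites and with identities of the $u$'s). That $\Phi(\oast)$ is a pseudonatural transformation is, as noted, essentially the content of the paragraph before the theorem, up to the analogous check.

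Next come the two round-trips. Applying $\Psi\Phi$ to $\oast$ yields, in the $A$-component, $\Delta_A^*\circ\oast_{A\times A}\circ(p_1^*\times p_2^*)$ for the two projections $p_1,p_2\colon A\times A\to A$; using pseudonaturality of $\oast$ along $\Delta_A$ to rewrite $\Delta_A^*\oast_{A\times A}$ as $\oast_A\circ(\Delta_A^*\times\Delta_A^*)$, together with $p_i\circ\Delta_A=\id_A$, this becomes canonically isomorphic to $\oast_A$, naturally in $\oast$, and these isomorphisms form a modification. Symmetrically, $\Phi\Psi$ sends $G$ to the transformation with $(A,B)$-component $\Delta_{A\times B}^*\circ G_{A\times B,A\times B}\circ(\pi_B^*\times\pi_A^*)$; pseudonaturality of $G$ along the pair of projections $(\pi_B,\pi_A)\colon(A,B)\to(A\times B,A\times B)$ rewrites $G_{A\times B,A\times B}\circ(\pi_B^*\times\pi_A^*)$ as $(\pi_B\times\pi_A)^*\circ G_{A,B}$, and since $(\pi_B\times\pi_A)\circ\Delta_{A\times B}=\id_{A\times B}$ we recover $G_{A,B}$ up to coherent isomorphism, again natural in $G$ and assembling into a modification. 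This gives $\Psi\Phi\cong\id$ and $\Phi\Psi\cong\id$, hence the equivalence. Note that no derivator axioms enter: the statement is pure $2$-category theory about $2$-functors on $\cCat\op$ and $\cCat\op\times\cCat\op$.

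The main obstacle is not any single step but the accumulation of coherence bookkeeping: one must check that the structure isomorphisms assigned to $\Phi(\oast)$ and to $\Psi(G)$ satisfy the pseudonaturality axioms, that $\Phi$ and $\Psi$ respect vertical composition of modifications, that the round-trip isomorphisms are themselves modifications, and that they are natural in $\oast$ and in $G$. Each of these is a routine diagram chase assembled from the coherence constraints of the $\D_i$, the pseudonaturality and modification axioms of the given data, and the elementary functor identities $(u\times u)\Delta_A=\Delta_B u$, $p_i\Delta_A=\id$, and $(\pi_B\times\pi_A)\Delta_{A\times B}=\id$; I would either carry these out in an appendix or, following the paper's general practice, simply remark that they are straightforward.
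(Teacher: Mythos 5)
Your proof is correct, but the route is genuinely different in presentation from the paper's. The paper's proof is a one-paragraph reduction: it observes that $\D_1\times\D_2\cong(\D_1\mathrel{\overline\times}\D_2)\circ\Delta$, notes that $\Delta$ is right 2-adjoint to $\times$ as 2-functors $\cCat\op\rightleftarrows\cCat\op\times\cCat\op$, and then invokes the categorified mate correspondence for pseudonatural transformations under a 2-adjunction, citing Lauda for the latter (with an alternative one-line argument via Day convolution). Your proof unwinds that mate correspondence by hand: you write down the two functors $\Phi$ and $\Psi$ explicitly (they are precisely~\eqref{eq:extdef} and its inverse $\Delta_A^*\circ(-)_{A,A}$), then verify the round-trips using the pseudonaturality of $\oast$ along $\Delta_A$ and of $G$ along the pair of projections, together with the functor identities $p_i\circ\Delta_A=\id$ and $(\pi_B\times\pi_A)\circ\Delta_{A\times B}=\id_{A\times B}$. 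Both arguments hinge on the same underlying 2-adjunction (you even say so in your opening sentence, though your $\Delta\dashv\times$ is phrased in $\cCat$ whereas the paper phrases the dual statement in $\cCat\op$); the paper's version buys brevity by outsourcing the coherence bookkeeping to a cited black box, while your version buys self-containment at the cost of the coherence checks you rightly flag as deferred. Both are valid, and yours may in fact be more informative to a reader who wants to see where the formulas~\eqref{eq:extdef} and $\Delta_A^*\circ\oast_{A\times A}$ come from.
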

As above $\cPDER$ is the category of prederivators and modifications, and $\cPsNat$ is the category of natural transformations and modifications.

\begin{proof}
  Note that $\D_1 \times \D_2 \cong (\D_1\mathrel{\overline\times}\D_2)\circ\Delta$, where $\Delta\colon\cCat\op \to \cCat\op\times\cCat\op$ is the diagonal.
  Thus the desired equivalence is between pseudonatural transformations
  \begin{equation}
    \vcenter{\xymatrix@C=3pc{
        \cCat\op\times\cCat\op\ar[r]^-{\D_1 \mathrel{\overline\times} \D_2} \ar[d]_{\times} \drtwocell\omit &
        \cCAT\ar@{=}[d]\\
        \cCat\op\ar[r]_{\D_3} &
        \cCAT
      }}
    \quad\text{and}\quad
    \vcenter{\xymatrix@C=3pc{
        \cCat\op\times\cCat\op\ar[r]^-{\D_1 \mathrel{\overline\times} \D_2} &
        \cCAT\ar@{=}[d]\\
        \cCat\op\ar[r]_{\D_3} \ar[u]^{\Delta} \urtwocell\omit &
        \cCAT.
      }}
  \end{equation}
  But $\Delta$ is right 2-adjoint to $\mathord\times$ (passing from \cCat to $\cCat\op$ reverses the handedness of adjunctions), 
  so this is just a categorified version of the mate correspondence; see for instance~\cite[Prop.~3.5]{lauda:faaa}.
  (Another way to say this is that because the monoidal structure of $\cCat$ is cartesian, 
  the induced Day convolution monoidal structure on $\cPDER = [\cCat\op,\cCAT]$ coincides with its pointwise monoidal structure.)
\end{proof}

At this point, it is worth pausing to emphasize our notation.
If we have a morphism of prederivators $\oast \colon \D_1\times\D_2 \to \D_3$, then:
\begin{itemize}
\item The internal product of $X\in\D_1(A)$ and $Y\in\D_2(A)$ is denoted with a subscript as $X\oast_A Y \in\D_3(A)$, and
\item The external product of $X\in\D_1(A)$ and $Y\in\D_2(B)$ is denoted without a subscript as $X \oast Y\in\D_3(A\times B)$.
\end{itemize}
More generally, we can have operations that are partly internal and partly external, such as
\begin{equation}\label{eq:combintext}
  \D_1(A\times B) \times \D_2(B\times C) \to \D_3(A\times B\times C).
\end{equation}
(This particular operation takes $X$ and $Y$ to $(1_A\times \Delta_B\times 1_C)^*(X\oast Y)$.)
We always include subscripts for indexing categories that are treated internally and leave them off for those treated externally; 
thus~\eqref{eq:combintext} would be denoted $(X,Y)\mapsto X\oast_B Y$.
This notational convention is sufficient to determine the type of any such expression unless one of the indexing categories 
appears more than once somewhere (and in that case context usually disambiguates).

\begin{rmk}
  There are two alternative ways of viewing~\eqref{eq:combintext}.
  On the one hand, by the functoriality of shifting, $\oast$ induces a morphism of prederivators
  \begin{equation}\label{eq:paramintern}
    \shift{\D_1}{B} \times \shift{\D_2}{B} \to \shift{\D_3}{B}
  \end{equation}
  of which~\eqref{eq:combintext} is an \emph{external} component.
  On the other hand, we can lift the definition~\eqref{eq:extdef} to a morphism of prederivators
  \begin{equation}
    \xymatrix@+.5pc{ \shift{\D_1}A \times \shift{\D_2}C\ar[r]^-{\pi_C^* \times \pi_A^*}&
      \shift{\D_1}{A\times C}\times \shift{\D_2}{A\times C}
      \ar[r]^-\oast &
      \shift{\D_3}{A\times C}}\label{eq:paramextern}
  \end{equation}
  and~\eqref{eq:combintext} is the \emph{internal} component of this morphism at $B$.
\end{rmk}

\begin{rmk}\label{rmk:extern-assoc}
  In the special case of a monoidal prederivator, the associativity for $\otimes\colon\D\times\D\to\D$, 
  together with the compatibility between the restriction functors and the monoidal product, induce associativity isomorphisms for the external product
  \begin{equation}
   (X\otimes Y) \otimes Z \cong X\otimes (Y\otimes Z)\label{eq:extern-assoc}
  \end{equation}
  for $X\in\D(A)$, $Y\in \D(B)$, and $Z\in\D(C)$.

  Recall that the unit consists of functors $\lS_\bC\colon\bbone \cong  \bbone^\bC\to \D(\bC)$ and so
  we have unit isomorphisms
  \begin{equation}
   X\otimes \lS_{\bbone} \cong X \qquad\text{and}\qquad \lS_{\bbone}\otimes X \cong X\label{eq:extern-unit}
  \end{equation}
  (where in all cases we decline to notate restriction along the associativity and unit isomorphisms of \cCat).
  Because $\lS_{\bbone}$ behaves as a unit object for all of \D in this way, we often write it simply as $\lS$.

  These isomorphisms satisfy appropriate versions of the axioms for a monoidal category.
  Conversely, from a coherent family of isomorphisms~\eqref{eq:extern-assoc} and~\eqref{eq:extern-unit} 
  we can reconstruct coherent associativity and unitality isomorphisms for the internal components making \D a monoidal prederivator.
  (See~\cite{shulman:frbi} for a general theorem along these lines.)
  The more general operations such as~\eqref{eq:combintext} are similarly associative and unital.
\end{rmk}

We can now define two-variable cocontinuity correctly.

\begin{defn}
  A two-variable morphism of derivators $\oast \colon \D_1\times\D_2 \to \D_3 $ is \textbf{cocontinuous in the first variable} if the canonical mate-transformation
  \begin{equation}
    \small (u\times 1)_! (X \oast Y) \to
    (u\times 1)_! (u^* u_! X \oast Y) \toiso
    (u\times 1)_! (u\times 1)^* (u_! X \oast Y) \to
    u_! (X) \oast Y\label{eq:coctsmate}
  \end{equation}
  is an isomorphism for all $X\in \D_1(A)$, $Y\in\D_2(C)$, and $u\colon A\to B$.
\end{defn}

By (Der2), (Der4), the functoriality of mates, and the pseudonaturality of the external product~\eqref{eq:extern-psnat}, 
it suffices for~\eqref{eq:coctsmate} to be an isomorphism when $B=\bbone$ and $C=\bbone$.
There is of course a dual notion, and a combined notion of $\oast$ being \textbf{cocontinuous in each variable} (separately).

\begin{defn}
  A \textbf{monoidal derivator} is a monoidal prederivator that is a derivator and whose product
  $\otimes \colon \D\times \D \xto{}\D$
  is cocontinuous in each variable.
  A monoidal derivator is \textbf{braided} or \textbf{symmetric} if and only if it is so as a monoidal prederivator.
\end{defn}

\begin{eg}\label{eg:repr-2vccts}
  If $\oast\colon \bC_1\times \bC_2 \to \bC_3$ is an ordinary two-variable functor between complete and cocomplete categories, 
  then for $B=C=\bbone$, a diagram $X\in(\bC_1)^A$, and an object $Y\in\bC_2 = (\bC_2)^\bbone$, the transformation~\eqref{eq:coctsmate} 
  is easily identified with the canonical map
  \[ \colim_A(X\otimes Y) \too (\colim_A X) \otimes Y\]
  and similarly for all left Kan extensions.
  Thus, the induced two-variable morphism $y(\bC_1) \times y(\bC_2) \to y(\bC_3)$ is cocontinuous in each variable, 
  as defined above, if and only if the original functor $\oast$ was so, in the ordinary sense.
  In particular, if~\bC is a complete and cocomplete monoidal category, then $y(\bC)$ is a monoidal derivator 
  if and only if the tensor product of~\bC is cocontinuous in each variable in the usual sense.
\end{eg}

\begin{eg}\label{thm:mmc-mder}
  If $\oast\colon \bC_1\times \bC_2 \to \bC_3$ is a two-variable Quillen left adjoint, then for any cofibrant object $Y\in\bC_2$, 
  the induced functor $(-\oast Y)$ is left Quillen, hence induces a cocontinuous morphism of derivators.
  Thus, the induced two-variable morphism of derivators is cocontinuous in each variable.
  In particular, if \bC is a monoidal model category, then $\ho(\bC)$ is a monoidal derivator.
\end{eg}

\begin{egs}
  If \D is a monoidal derivator, then so is $\shift{\D}{B}$.  
  However, the tensor product of $\D\op$ need not be cocontinuous in each variable.
  This fails already in the represented case: colimits in $\D\op$ are limits in $\D$, and $\otimes$ need not (and rarely does) preserve limits in either variable.
\end{egs}

\section{Stable monoidal derivators}
\label{sec:stablemonoidal}

In \S\ref{sec:derivators} we recalled the definition of stable derivators, and in the previous section we defined monoidal derivators.
Most of this paper will be spent showing that any stable \emph{and} monoidal 
derivator displays  the structure we expect from examples such as the categories of spectra
and unbounded chain complexes.
Our proof of additivity closely follows May's, so we will eventually show that a stable monoidal derivator satisfies versions of all of his axioms.
In this section we begin with the first two. 

From now on, let $\D$ be a derivator that is both monoidal and stable.
The following is a version for non-symmetric monoidal categories of one of the compatibility conditions of~\cite{HPS:axiomatic}, which in turn is equivalent to May's (TC1).

\begin{thm}\label{thm:better-tc1}
  For $x,y\in\D(\bbone)$, there are natural isomorphisms
  \begin{equation}
    \Sigma x \otimes y \;\toiso\; \Sigma(x\otimes y) \;\toiso\; x\otimes \Sigma y\label{eq:tc1-isos}
  \end{equation}
  which commute with the associativity and unit isomorphisms in an obvious way.
  If \D is symmetric, they also commute with the symmetry, in the sense that the following square commutes
  \begin{equation}\label{eq:tc1-symm}
    \vcenter{\xymatrix@-.5pc{
        \Sigma (x\otimes y)\ar[r]\ar[d] &
        \Sigma x\otimes y\ar[d]\\
        \Sigma (y\otimes x)\ar[r] &
        y\otimes \Sigma x.
      }}
  \end{equation}
  Moreover, the induced composite
  \begin{equation}
    \Sigma\Sigma(x\otimes y) \toiso \Sigma(\Sigma x\otimes y)
    \toiso \Sigma x \otimes \Sigma y
    \toiso \Sigma (x\otimes \Sigma y)
    \toiso \Sigma \Sigma(x\otimes y)\label{eq:tc1-betterswap}
  \end{equation}
  is multiplication by $-1$.
\end{thm}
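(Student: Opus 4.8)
The plan is to reduce~\eqref{eq:tc1-betterswap} to a statement about a single coherent diagram in the stable derivator $\D$, namely the iterated form of the sign of~\autoref{rmk:sign} established in~\cite[\S5]{gps:stable}.

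First I would make explicit how the isomorphisms of~\eqref{eq:tc1-isos} are built, since~\eqref{eq:tc1-betterswap} is assembled from them. For $w\in\D(\bbone)$ write $S^w\in\D(\Box)$ for the cocartesian suspension square, so that $\Sigma w=(1,1)^*S^w$. Because $\otimes$ is cocontinuous in each variable, the functor $(-)\otimes v$ (for a fixed object $v$) is a cocontinuous morphism of derivators, hence preserves cocartesian squares and zero objects; so $S^w\otimes v$ and $w\otimes S^v$ are cocartesian squares with the same underlying diagram as $S^{w\otimes v}$, and the isomorphisms $\Sigma w\otimes v\cong\Sigma(w\otimes v)$ and $w\otimes\Sigma v\cong\Sigma(w\otimes v)$ are just the induced identifications of apices.

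Iterating, I would consider $E:=S^x\otimes S^y\in\D(\Box\times\Box)$, the external product of the two suspension squares. By the previous paragraph $E$ is cocartesian in each of the two $\Box$-directions, has $x\otimes y$ at its initial corner and $\Sigma x\otimes\Sigma y$ at its terminal corner, and each of its square slices is a suspension square up to the identifications of~\eqref{eq:tc1-isos}. Consequently its terminal corner can be exhibited as $\Sigma^2(x\otimes y)$ in two ways --- reading the $\Box$ that carries $\Sigma x$ as the ``outer'' suspension, or the one that carries $\Sigma y$ --- and the key bookkeeping step is to check that maps $3$ and $4$ of~\eqref{eq:tc1-betterswap} assemble into the first reading while maps $1$ and $2$ assemble into the inverse of the second. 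This uses only naturality of the isomorphisms~\eqref{eq:tc1-isos} and the fact that left Kan extensions compose, and it identifies~\eqref{eq:tc1-betterswap} with the automorphism of $\Sigma^2(x\otimes y)$ measuring the discrepancy between the two readings of $E$.

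That discrepancy is multiplication by $-1$: this is precisely the interaction of two ``orthogonal'' suspension squares inside a $\Box\times\Box$-diagram analysed in~\cite[\S5]{gps:stable} --- the categorical incarnation of the classical fact that transposing the two circle factors of $S^1\wedge S^1$ has degree $-1$ --- and it reduces to~\autoref{rmk:sign}. I expect the main obstacle to be the bookkeeping step: verifying that the four maps of~\eqref{eq:tc1-betterswap}, each of which is either $\Sigma$ applied to a cocontinuity isomorphism or a cocontinuity isomorphism applied to an already-suspended object, really do glue into that single $\Box\times\Box$ comparison without introducing spurious signs or requiring extra coherences. Once the composite is so identified, the conclusion is immediate.
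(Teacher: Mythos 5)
Your approach matches the paper's: form the external product of two suspension squares in $\D(\Box\times\Box)$, use cocontinuity in each variable to see the result is a ``hypercube'' of cocartesian squares, and trace the sign back to \autoref{rmk:sign}. The construction of the isomorphisms~\eqref{eq:tc1-isos} and the compatibility with associativity, unit and symmetry are handled essentially as you describe, so the first part of the statement is fine.

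The problem is where you locate the difficulty. You say the ``main obstacle'' is verifying that the four maps of~\eqref{eq:tc1-betterswap} assemble into the two readings of $E$, and that once this bookkeeping is done ``the conclusion is immediate'' from~\cite[\S5]{gps:stable}. In fact the opposite is closer to the truth. Identifying which pair of arrows corresponds to which $\boxbar$-shaped restriction of the hypercube is the comparatively routine part; the genuine content is showing that those two $\boxbar$-diagrams become isomorphic \emph{after transposing one square}, by an isomorphism that fixes $x\otimes y$ and $\Sigma x\otimes\Sigma y$. This is precisely the reduction to \autoref{rmk:sign}, and it is not a matter of naturality plus ``left Kan extensions compose.'' The paper has to build a coherent witness for the comparison: it right-Kan-extends the hypercube to adjoin two auxiliary objects, invokes (the dual of) Franke's detection lemma \cite[Prop.~3.10]{groth:ptstab} to see these are zero objects, restricts to a specific shape, and then left-Kan-extends to produce a single coherent cube whose left and right faces \emph{are} the two $\boxbar$-diagrams and manifestly differ by one transposition. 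Without some such construction, ``the discrepancy between the two readings of $E$'' is not a single well-defined automorphism that you can simply declare equal to $-1$ by appealing to \autoref{rmk:sign}; the reduction itself is the proof, and your sketch leaves it out.

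So: right strategy, same as the paper's, but the step you flag and then wave past is the actual theorem. To close the gap you need to exhibit the coherent diagram (or an equivalent one) that simultaneously realizes both composites and exposes the single transposed square.
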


\begin{proof}
  By definition, $\Sigma y$ comes with a cocartesian square of the form
  \[\vcenter{\xymatrix@-.5pc{
      y\ar[r]\ar[d] &
      0\ar[d]\\
      0\ar[r] &
      \Sigma y.
    }}\]
  Applying the external product $\D(\bbone) \times \D(\Box) \to \D(\Box)$ to $x$ and this square, we obtain a square of the form
  \[\vcenter{\xymatrix@-.5pc{
      x\otimes y\ar[r]\ar[d] &
      0\ar[d]\\
      0\ar[r] &
      x\otimes \Sigma y.
    }}\]
  We have used the fact that the tensor product preserves zero objects since it is cocontinuous in each variable.
  Cocontinuity also implies that this square is again cocartesian, hence induces the second isomorphism in~\eqref{eq:tc1-isos}; the first is similar.
  Commutativity with the associativity, unit, and symmetry isomorphisms follows since this entire construction was functorial and these isomorphisms are natural.

  For the second part, note that an isomorphism $\Sigma\Sigma z \cong w$ can be determined by giving a coherent diagram of the form
  \begin{equation}
    \vcenter{\xymatrix{
        z\ar[r]\ar[d] &
        0\ar[d]\\
        0\ar[r] &
        \bullet \ar[r] \ar[d] &
        0 \ar[d]\\
        & 0 \ar[r] & w
      }}
  \end{equation}
  in which both squares are cocartesian.
  Now from $x$ and $y$ we can obtain cocartesian squares
  \begin{equation}
    \vcenter{\xymatrix@-.5pc{
        x\ar[r]\ar[d] &
        0_2\ar[d]\\
        0_1\ar[r] &
        \Sigma x
      }}
    \qquad\text{and}\qquad
    \vcenter{\xymatrix@-.5pc{
        y\ar[r]\ar[d] &
        0_4\ar[d]\\
        0_3\ar[r] &
        \Sigma y
      }}
  \end{equation}
  which we denote $X,Y\in\D(\Box)$.
  (All objects denoted $0_k$ are zero objects; we have given them subscripts to aid in telling them apart.)
  The external product $X\otimes Y \in \D(\Box\times \Box)$ is a hypercube, which we have shown as the solid arrows in \autoref{fig:tc1-hypercube}.
  (We have abbreviated $a\otimes b$ by $a.b$ for conciseness.)
  \begin{figure}
    \centering
    \begin{tikzpicture}[->,xscale=1.6]
      \node (xy) at (0.4,7.2) {$x.y$};
      \node (o2y) at (6.4,7.2) {$0_2.y$};
      \node (o1y) at (1,6) {$0_1.y$};
      \node (sxy) at (7,6) {$\Sigma x.y$};
      \node (xo4) at (2.2,5) {$x.0_4$};
      \node (o2o4) at (4.2,5) {$0_2.0_4$};
      \node (o1o4) at (2.8,3.8) {$0_1.0_4$};
      \node (sxo4) at (4.8,3.8) {$\Sigma x.0_4$};
      \node (xsy) at (2.2,3) {$x.\Sigma y$};
      \node (o2sy) at (4.2,3) {$0_2.\Sigma y$};
      \node (o1sy) at (2.8,1.8) {$0_1.\Sigma y$};
      \node (sxsy) at (4.8,1.8) {$\Sigma x.\Sigma y$};
      \node (xo3) at (0.4,1.2) {$x.0_3$};
      \node (o2o3) at (6.4,1.2) {$0_2.0_3$};
      \node (o1o3) at (1,0) {$0_1.0_3$};
      \node (sxo3) at (7,0) {$\Sigma x.0_3$};
      \draw (xy) -- (o2y); \draw (o2y) -- (sxy);
      \draw (xy) -- (o1y);
      \draw (xo4) -- (o2o4); \draw (o1o4) -- (sxo4);
      \draw (xo4) -- (o1o4); \draw (o2o4) -- (sxo4);
      \draw (xsy) -- (o2sy); \draw (o1sy) -- (sxsy);
      \draw (xsy) -- (o1sy); \draw (o2sy) -- (sxsy);
      \draw (xo3) -- (o2o3); \draw (o1o3) -- (sxo3);
      \draw (xo3) -- (o1o3); \draw (o2o3) -- (sxo3);
      \draw (xy) -- (xo4); \draw (o2y) -- (o2o4); 
      \draw (xo3) -- (xsy); \draw (o2o3) -- (o2sy); 
      \draw (xy) -- (xo3); \draw (o2y) -- (o2o3);
      \draw (sxy) -- (sxo3);
      \draw (xo4) -- (xsy); \draw (o2o4) -- (o2sy);
      \node (o5) at (0.6,4.5) {$0_5$};
      \node (o6) at (1.8,6.7) {$0_6$};
      \begin{scope}[densely dotted]
        \draw (xy) -- (o5); \draw (o5) -- (o1y); \draw (o5) -- (xo3);
        \draw (xy) -- (o6); \draw (o6) -- (o2y); \draw (o6) -- (xo4);
      \end{scope}
      \draw[white,line width=5pt,-] (o1y) -- (sxy);     \draw (o1y) -- (sxy);  
      \draw[white,line width=5pt,-] (o1y) -- (o1o4);    \draw (o1y) -- (o1o4); 
      \draw[white,line width=5pt,-] (sxy) -- (sxo4);    \draw (sxy) -- (sxo4); 
      \draw[white,line width=5pt,-] (sxo3) -- (sxsy);   \draw (sxo3) -- (sxsy);
      \draw[white,line width=5pt,-] (o1o3) -- (o1sy);   \draw (o1o3) -- (o1sy);
      \draw[white,line width=5pt,-] (o1y) -- (o1o3);    \draw (o1y) -- (o1o3); 
      \draw[white,line width=5pt,-] (o1o4) -- (o1sy);   \draw (o1o4) -- (o1sy);
      \draw[white,line width=5pt,-] (sxo4) -- (sxsy);   \draw (sxo4) -- (sxsy);
    \end{tikzpicture}
    \caption{The hypercube for (TC1)}
    \label{fig:tc1-hypercube}
  \end{figure}
  Of course, since $\otimes$ preserves zero objects in each variable, there are a lot of zero objects in this diagram.
  However, we have notated them all as tensor products rather than merely as ``$0$'', since for purposes of identifying minus signs 
  (see \autoref{rmk:sign}) it is important to distinguish between squares and their transposes.

  Now by restriction from \autoref{fig:tc1-hypercube}, we obtain the two coherent diagrams shown in \autoref{fig:tc1-1a}.
  \begin{figure}
    \centering
    \begin{equation}\label{eq:tc1-1}
      \vcenter{\xymatrix@-.5pc{
          x. y\ar[r]\ar[d] &
          0_2. y\ar[d]\\
          0_1. y\ar[r] &
          \Sigma x. y \ar[r] \ar[d] &
          \Sigma x . 0_4 \ar[d]\\
          & \Sigma x . 0_3 \ar[r] &
          \Sigma x . \Sigma y
        }}
      \qquad
      \vcenter{\xymatrix@-.5pc{
          x. y\ar[r]\ar[d] &
          x. 0_4\ar[d]\\
          x. 0_3\ar[r] &
          x. \Sigma y \ar[r]\ar[d] &
          0_2 . \Sigma y \ar[d]\\
          & 0_1 . \Sigma y \ar[r] &
          \Sigma x . \Sigma y.
        }}
    \end{equation}
    \caption{Two ways that $\Sigma x\otimes\Sigma y$ is $\Sigma\Sigma(x\otimes y)$}
    \label{fig:tc1-1a}
  \end{figure}
  These two diagrams induce, respectively, the composite of the first two arrows in~\eqref{eq:tc1-betterswap} and the composite of the last two arrows therein.
  Thus, it will suffice to show that these two diagrams become isomorphic after transposing one of the squares in one of them, 
  by an isomorphism which restricts to the identity on $x.y$ and $\Sigma x.\Sigma y$.

  Let $A$ denote the category $\Box\times \Box$ extended with two new objects $5$ and $6$ as shown with the dotted arrows in \autoref{fig:tc1-hypercube}, 
  with inclusion $j\colon \Box\times\Box \into A$.
  Let $Z = j_*(X\otimes Y)$; then by the dual of Franke's lemma, a detection lemma for cartesian squares \cite[Proposition~3.10]{groth:ptstab}, we conclude that the squares
  \begin{equation}
    \vcenter{\xymatrix@-.5pc{
        0_5\ar[r]\ar[d] &
        0_1.y\ar[d]\\
        x.0_3\ar[r] &
        0_1.0_3
      }}
    \qquad\text{and}\qquad
    \vcenter{\xymatrix@-.5pc{
        0_6\ar[r]\ar[d] &
        0_2.y\ar[d]\\
        x.0_4\ar[r] &
        0_2.0_4
      }}
  \end{equation}
  are cartesian, so that the objects labeled $0_5$ and $0_6$ in $Z$ are zero objects.

  Now by restriction from $Z$ along a suitable functor, we obtain a coherent diagram looking like the solid arrows in \autoref{fig:tc1-1}, with the middle object $\Sigma(x. y)$ also omitted
  (but with the composite arrows from each of $0_5$ and $0_6$ to each of $0_2.0_3$ and $0_1.0_4$ included, so that the shape is a full subcategory of the shape of the whole of \autoref{fig:tc1-1}).
  \begin{figure}
    \centering
    \[\xymatrix@-1pc{
      x. y \ar[dd] \ar[dr] &&
      x. y \ar@{=}[ll] \ar@{=}[rr] \ar'[d][dd] \ar[dr] &&
      x. y \ar[dr] \ar'[d][dd]\\
      & 0_2. y \ar[dd] &&
      0_6 \ar[ll] \ar[rr] \ar@{.>}[dd] &&
      x. 0_4 \ar[dd]\\
      0_1. y\ar[dr] &&
      0_5 \ar'[l][ll] \ar'[r][rr] \ar@{.>}[dr] &&
      x. 0_3  \ar[dr]\\
      & \Sigma x. y \ar[dd] \ar[dr] &&
      \Sigma(x. y) \ar@{.>}[ll] \ar@{.>}[rr] \ar@{.>}[dr] \ar@{.>}'[d][dd] &&
      x. \Sigma y \ar'[d][dd] \ar[dr]\\
      && \Sigma x. 0_4 \ar[dd] &&
      0_1. 0_4 \ar[ll] \ar[rr] \ar[dd] &&
      0_1. \Sigma y \ar[dd]\\
      & \Sigma x. 0_3 \ar[dr] &&
      0_2. 0_3 \ar'[l][ll] \ar'[r][rr] \ar[dr] &&
      0_2. \Sigma y \ar[dr] \\
      && \Sigma x. \Sigma y &&
      \Sigma x. \Sigma y \ar@{=}[ll] \ar@{=}[rr] &&
      \Sigma x. \Sigma y
      }\]
    \caption{The minus sign in (TC1)}
    \label{fig:tc1-1}
  \end{figure}
  Finally, we perform a left Kan extension to add the object denoted $\Sigma(x.y)$ and the dotted arrows connecting to it.
  Since all the solid horizontal arrows are isomorphisms, and the left, middle, and right faces are composed of cocartesian squares, the horizontal dotted arrows are also isomorphisms.
  However, the left and right faces of \autoref{fig:tc1-1} are the left- and right-hand diagrams in~\autoref{fig:tc1-1a}, with one square transposed on the right side.
  This completes the proof.
\end{proof}

In the symmetric case, we easily deduce May's (TC1).
Recall that $\lS$ means $\lS_{\bbone}$.

\begin{cor}[TC1]\label{thm:tc1}
  Suppose \D is symmetric.
  Then there is a natural isomorphism $\alpha\colon \Sigma x\toiso x \otimes \Sigma\lS$ such that the composite
  \begin{equation}\label{eq:tc1-swap}
    \xymatrix{\Sigma\Sigma\lS \ar[r]^-{\alpha} &
      \Sigma\lS \otimes \Sigma\lS \ar[r]^-{\symm}&
      \Sigma\lS \otimes \Sigma\lS \ar[r]^-{\alpha^{-1}}&
      \Sigma\Sigma\lS
    }
  \end{equation}
  is multiplication by $-1$, where $\symm$ denotes the symmetry isomorphism.
\end{cor}
\begin{proof}
  Take $y=\lS$ and let $\alpha$ be the second isomorphism in~\eqref{eq:tc1-isos} (composed with a unit isomorphism).
  Naturality of the unit isomorphism and~\eqref{eq:tc1-symm} then implies that the composite
  \begin{equation}
    \Sigma\lS \otimes x \xto{\symm} x\otimes \Sigma\lS \xto{\alpha^{-1}} \Sigma x\label{eq:tc1-other}
  \end{equation}
  is an instance of the first isomorphism in~\eqref{eq:tc1-isos} (again composed with a unit isomorphism).
  In fact, we can recover the general case of~\eqref{eq:tc1-isos} from $\alpha$ and~\eqref{eq:tc1-other}, using associativity and unitality, e.g.
  \[ \Sigma(x\otimes y) \xto{\alpha} (x\otimes y) \otimes \Sigma \lS
  \toiso x\otimes (y\otimes \Sigma\lS) \xto{\alpha^{-1}} x\otimes\Sigma y.
  \]
  (May used this as a \emph{definition} of the isomorphisms~\eqref{eq:tc1-isos} in terms of his $\alpha$.)
  Finally, using~\eqref{eq:tc1-symm} we can identify~\eqref{eq:tc1-swap} with~\eqref{eq:tc1-betterswap}.
\end{proof}

In fact, in \autoref{thm:better-tc1} we did not need stability of \D, only pointedness.
The next axiom could also be stated in a pointed version by using cofiber sequences, but for simplicity we use the notation of the stable case.

\begin{thm}[part of (TC2)]\label{thm:tc2}
  For any distinguished triangle \[\DT x f y g z h {\Sigma x}\]
  in $\D(\bbone)$ and any $w\in\D(\bbone)$, each of the following triangles is distinguished.
  \begin{gather*}
    \DT{x\otimes w}{f\otimes 1}{y\otimes w}{g\otimes 1}{z\otimes w}{h\otimes 1}{\Sigma(x\otimes w),}\\
    \DT{w\otimes x}{1\otimes f}{w\otimes y}{1\otimes g}{w\otimes z}{1\otimes h}{\Sigma(w\otimes x).}
  \end{gather*}
\end{thm}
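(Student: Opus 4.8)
The plan is to obtain each of the two triangles as the distinguished triangle induced by a cofiber sequence --- concretely, the one obtained by tensoring with $w$ a cofiber sequence underlying the given distinguished triangle. I will treat the first triangle; the second is handled identically using the external product on the other side (equivalently, the morphism $(w\otimes-)$).

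The first step is to record that, for a fixed object $w\in\D(\bbone)$, the assignment $X\mapsto X\otimes w$ underlies a morphism of derivators $(-\otimes w)\colon\D\to\D$: its pseudonaturality isomorphisms $u^*(X\otimes w)\cong (u^*X)\otimes w$ are instances of~\eqref{eq:extern-psnat} (with the second functor taken to be $\id_{\bbone}$). Unwinding the definitions, the assertion that $\otimes$ is cocontinuous in the first variable is precisely the assertion that this morphism is cocontinuous. A cocontinuous morphism of derivators preserves left Kan extensions, in particular cocartesian squares (these being the objects in the image of $(i_\ulcorner)_!$), and it preserves the initial object, hence --- as $\D$ is pointed --- the zero object, so that $0\otimes w\cong 0$.

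The second step is to choose a cofiber sequence $Q\in\D(\boxbar)$ of the form~\eqref{eq:cofiberseq} underlying the distinguished triangle $\DT x f y g z h{\Sigma x}$, so that both squares of $Q$ are cocartesian and $h$ is the bottom edge $z\to w'$ of $Q$ followed by the isomorphism $w'\cong\Sigma x$ induced by the outer rectangle, which is cocartesian by~\cite[Prop.~3.13]{groth:ptstab}. Applying $(-\otimes w)$ to $Q$ and invoking the first step: the two vertices of $Q$ that are zero objects are sent to zero objects (by pseudonaturality and $0\otimes w\cong0$), and both squares remain cocartesian; hence $Q\otimes w$ is again a cofiber sequence, of the form~\eqref{eq:cofiberseq}, with top row $x\otimes w\xto{f\otimes1}y\otimes w\to 0$ and bottom row $0\to z\otimes w\to w'\otimes w$. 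It therefore induces a distinguished triangle beginning $x\otimes w\xto{f\otimes1}y\otimes w\xto{g\otimes1}z\otimes w$.

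The final step --- the only one that is not a matter of formal naturality, and hence the main obstacle --- is to identify the last object $w'\otimes w$ of this distinguished triangle with $\Sigma(x\otimes w)$ compatibly with the last morphism. Here I would use that the outer rectangle of $Q\otimes w$ is, by pseudonaturality of $(-\otimes w)$ along $\Box\hookrightarrow\boxbar$, exactly $(-\otimes w)$ applied to the outer rectangle of $Q$, hence a cocartesian square obtained by tensoring with $w$ a cocartesian square on the span $0\leftarrow x\to0$. The isomorphism $\Sigma x\otimes w\cong\Sigma(x\otimes w)$ of~\autoref{thm:better-tc1} is, by that theorem's proof, induced by tensoring with $w$ the defining cocartesian square~\eqref{eq:susploopsquares} of $\Sigma x$, which is another cocartesian square on this same span; since any two cocartesian squares restricting to the same diagram on $\ulcorner$ are canonically isomorphic, comparing corners shows that the isomorphism $w'\otimes w\cong\Sigma(x\otimes w)$ coming from $Q\otimes w$ factors as the isomorphism $w'\cong\Sigma x$ built into $Q$ followed by the isomorphism of~\autoref{thm:better-tc1}. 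Tracing the bottom edge of $Q$ through these identifications turns the third map of the triangle into $h\otimes1$, under the standing identification $\Sigma(x\otimes w)\cong\Sigma x\otimes w$. This exhibits the first triangle as distinguished; the second is obtained symmetrically.
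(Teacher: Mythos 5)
Your proof is correct and follows essentially the same strategy as the paper: tensor a cofiber sequence underlying the given triangle with $w$, use cocontinuity of $(-\otimes w)$ to see that the result is again a cofiber sequence, and read off the induced distinguished triangle. The paper's own proof is much terser (it simply says to take the external product with $w$ and argue as in the proof of \autoref{thm:better-tc1}), so your explicit treatment of the third morphism --- in particular the factorization of the canonical isomorphism $w'\otimes w\cong\Sigma(x\otimes w)$ through the comparison of cocartesian squares over the span $0\leftarrow x\otimes w\to 0$ --- is a legitimate and welcome expansion of an argument the paper leaves implicit.
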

\begin{proof}
  By assumption, we have a cofiber sequence
  \[\vcenter{\xymatrix@-.5pc{
      x\ar[r]\ar[d] &
      y\ar[r]\ar[d] &
      0\ar[d]\\
      0\ar[r] &
      z\ar[r] &
      \Sigma x.
    }}\]
  Taking the external product with $w$ on both sides, and arguing as in the proof of \autoref{thm:tc1}, we obtain the triangles above.
\end{proof}

See \autoref{thm:tc2b} for the rest of May's axiom TC2.

\section{Ends, coends, and the tensor product of functors}
\label{sec:ends-coends}

Theorems \ref{thm:better-tc1} and \ref{thm:tc2} describe some compatibility between a stable and monoidal structure, 
but these results are only the beginning of the structure displayed in well-known examples.
In \S\S\ref{sec:mon-triang}--\ref{sec:rotated} we will describe the relationship between stability and the \emph{pushout product}.
As preparation, in this section we introduce the canceling tensor product of diagrams, of which the pushout product is a special case.

As described in \S\ref{sec:monoidal-derivators},  a monoidal derivator has both an external product and an internal product. 
However, if \bC is a \emph{cocomplete} monoidal category, there is a third type of operation induced on the values of its represented prederivator: 
the \emph{(canceling) tensor product of functors}
\begin{equation}\label{eq:tpf}
  \otimes_{[A]}\colon\bC^{A\op}\times \bC^{A} \to \bC
  \qquad\text{defined by}\qquad
  X \otimes_{[A]} Y \coloneqq \int^{a\in A} X_a \otimes Y_a.
\end{equation}
This requires not only the monoidal structure but the notion of \emph{coend} in \bC (denoted by an integral sign with a variable at the top).

In classical category theory, there are several ways to define coends.
For a generalization to derivators, we choose one which is particularly easy to work with.
In Appendix~\ref{sec:coends}, however, we will show that our definition agrees with two other natural ones.

Let $A$ be a small category; its \textbf{twisted arrow category} $\tw(A)$ is defined to be the category of elements of the hom-functor $A(-,-)\colon A\op\times A\to \bSet$.
Thus, its objects are morphisms $a\xto{f} b$ in $A$, while its morphisms from $a_1\xto{f_1} b_1$ to $a_2\xto{f_2} b_2$ 
are pairs of morphisms $b_1 \xto{h} b_2$ and $a_2 \xto{g} a_1$ such that $f_2 = h f_1 g$ (that is, ``two-sided factorizations'' of $f_2$ through $f_1$).
It comes with a projection
\[(s,t)\colon\tw(A) \to A\op\times A,\]
where for $a\xto{f} b$ we have $s(f) = a$ and $t(f) = b$.
We will also be interested in its opposite category $\tw(A)\op$, which of course comes with a projection
\[(t\op,s\op)\colon\tw(A)\op \to A\op\times A.\]

\begin{defn}\label{def:coend}
  If \D is a derivator, then the \textbf{coend} of $X\in \D(A\op\times A)$ is defined to be
  \[ \int^A X \coloneqq (\pi_{\tw(A)\op})_! (t\op,s\op)^* X, \]
  which is an object of the underlying category $\D(\bbone)$.
  Dually, the \textbf{end} of $X$, denoted $\int_A X$, is its coend in $\D\op$.
\end{defn}

Put differently, the coend functor $\int^A$ is the composite
\begin{equation}
  \D(A\op\times A) \xto{(t\op,s\op)^*} \D(\tw(A)\op)  \xto{(\pi_{\tw(A)\op})_!} \D(\bbone).
\end{equation}
This definition also makes sense ``with parameters'', i.e.\ if $X\in \D(A\op\times A\times B)$, then we have $\int^A X \in\D(B)$.
The naturality in parameters is then phrased concisely by defining the coend as the following morphism of derivators
\begin{equation}
  \shift{\D}{A\op\times A} \xto{(t\op,s\op)^*} \shift{\D}{\tw(A)\op} \xto{(\pi_{\tw(A)\op})_!} \D.\label{eq:paramcoend}
\end{equation}

When $\D=y(\bC)$ is a represented derivator, it is easy to verify that this definition reproduces the usual notion of coend.

We have the usual Fubini-type theorem (which again has obvious variants with parameters).

\begin{lem}\label{lem:Fubini}
  Let \D be a derivator,
  \[ \symm \colon (A\op\times A)\times (B\op\times B) \xto{\cong} (A\times B)\op\times (A\times B) \]
  be the canonical isomorphism, and  $X\in\D((A\times B)\op\times (A\times B))$.
  Then there are natural isomorphisms
  \[\int^A\int^B \symm^\ast X\cong \int^{A\times B}X\cong \int^B\int ^A \symm^\ast X. \]
\end{lem}
\begin{proof}
  This follows immediately from the observation that there is a canonical isomorphism between $\tw(A\times B)$ and $\tw(A)\times \tw(B)$, 
  which is compatible with the source and target maps in the sense that the following diagram commutes
  \[
   \raisebox{\depth-\height}{\xymatrix{
   \tw(A)\op\times \tw(B)\op\ar[d]\ar[r]&\tw(A\times B)\op\ar[d]\\
   A\op\times A\times B\op\times B \ar[r]_-{\symm}&  (A\times B)\op\times (A\times B).
   }}\qedhere 
  \]
\end{proof}

We also mention an ``adjointness'' property which will be useful later.

\begin{lem}\label{thm:coendadj}
  Let $f\colon A\to B$.
  \begin{enumerate}
  \item For $X\in\D(B\op\times A)$, we have a natural isomorphism\label{item:coendadj1}
    \begin{equation}
      \int^A (f\op\times 1)^* X \;\cong\; \int^B (1\times f)_! X.
    \end{equation}
  \item For $X\in\D(A\op\times B)$, we have a natural isomorphism\label{item:coendadj2}
    \begin{equation}
      \int^A (1\times f)^* X\;\cong\; \int^B (f\op\times 1)_! X.
    \end{equation}
\end{enumerate}
\end{lem}
\begin{proof}
  We prove~\ref{item:coendadj1};~\ref{item:coendadj2} is analogous.
  Since $(t\op,s\op)\colon \tw(B)\op \to B\op\times B$ is a fibration, the pullback square below is homotopy exact.
  \[\vcenter{\xymatrix@C=3pc{
      \tw(A)\op \ar[r]^{(t\op,s\op)} \ar@(d,l)[ddr] \ar@{.>}[dr]
      &A\op\times A \ar[dr]^{f\op\times 1}\\
      &\tw(f)\op\ar[r]\ar[d] \pullbackcorner &
      B\op\times A\ar[d]^{1\times f}\\
      &\tw(B)\op\ar[r]_{(t\op,s\op)} \ar[d] &
      B\op\times B\\
      & \bbone
    }}\]
  Thus, by unraveling definitions, it will suffice to show that the induced functor $\tw(A)\op \to \tw(f)\op$ is homotopy final.
  Using the characterization of homotopy exact squares in \cite[Theorem 3.15]{gps:stable} 
  and the definition of $\tw(f)\op$ as a pullback, this is equivalent to asking that for any $a\in A$, $b\in B$, and $\phi\colon f a \to b$, 
  the following category $K_{a,b,\phi}$ is homotopy contractible:
  \begin{itemize}[leftmargin=2em]
   \item Its objects consist of a pair of objects $a_1,a_2\in A$, morphisms $\alpha\colon a\to a_1$ and $\xi\colon a_1\to a_2$ in $A$, 
    and $\psi\colon  f a_2 \to b$ in $B$, such that $\psi \circ f\xi \circ f\alpha = \phi$.
  \item A morphism from $(a_1,a_2,\alpha,\xi,\psi)$ to $(a_1',a_2',\alpha',\xi',\psi')$ consists of a pair of morphisms 
   $\zeta_1\colon a_1 \to a_1'$ and $\zeta_2\colon a_2' \to a_2$ in $A$ such that $\zeta_1 \circ \alpha = \alpha'$, $\psi \circ f \zeta_2 = \psi'$, and $\xi = \zeta_2 \circ \xi' \circ \zeta_1$.
  \end{itemize}
  Let $L_{a,b,\phi}$ be the full subcategory of $K_{a,b,\phi}$ whose objects have $a_1=a$ and $\alpha = 1_a$.
  Then $L_{a,b,\phi}$ is coreflective; the coreflection of $(a_1,a_2,\alpha,\xi,\psi)$ is $(a,a_2,1_a,\xi\circ\alpha,\psi)$.
  Thus, since the coreflection is a right adjoint and hence homotopy final, $K_{a,b,\phi}$ is homotopy contractible if and only if $L_{a,b,\phi}$ is.

  However, $L_{a,b,\phi}$ has a terminal object, namely $(a,a,1_a,1_a,\phi)$.
  Thus, since the inclusion $\bbone \to L_{a,b,\phi}$ of this terminal object is a right adjoint and $\bbone$ is homotopy contractible, so is $L_{a,b,\phi}$.
\end{proof}

We will be particularly interested in taking coends of diagrams of the form $X\oast Y$, where 
$\oast\colon \D_1\times \D_2 \to \D_3$ is a two-variable morphism, and $X\in\D_1(B\op)$ and $Y\in\D_2(B)$.
In the represented case, this yields~\eqref{eq:tpf}.
We denote such a \textbf{(canceling) tensor product of functors} by
\begin{equation}\label{eq:hoten}
  X \oast_{[B]} Y \coloneqq \int^B (X\oast Y).
\end{equation}
The brackets around $[B]$ indicate that it is ``canceled'', no longer appearing at all as an indexing category in the result.
Functorially, $\oast_{[B]}$ is the composite
\begin{equation}\label{eq:hotenfunctor}
  \D_1(B\op) \times \D_2(B) \xto{\oast} \D_3(B\op\times B) \xto{\int^B} \D_3(\bbone).
\end{equation}

The canceling tensor product over one category $B$ can also be combined with internal or external products over other categories.
For instance, we have a functor
\begin{equation}
  \D_1(A\times B\op) \times \D_2(A\times B) \to \D_3(A)\label{eq:combintcanc}
\end{equation}
which is internal in $A$ and canceling in $B$; thus we write it $X \oast_{A,[B]} Y$.
As with~\eqref{eq:combintext}, we can view~\eqref{eq:combintcanc} either as the tensor product~\eqref{eq:hotenfunctor} over $B$ constructed from the induced morphism
\begin{equation}
  \shift{\D_1}{A} \times \shift{\D_2}{A} \to \shift{\D_3}{A}
\end{equation}
or as the internal component at $A$ of the lifted version of~\eqref{eq:hotenfunctor}
\begin{equation}\label{eq:tensor}
  \xymatrix{\shift{\D_1}{B\op} \times \shift{\D_2}{B} \ar[r]^-{\oast} &
    \shift{\D_3}{B\op\times B} \ar[r]^-{\int^B} & \D_3. }
\end{equation}
Similarly, we have a functor
\begin{equation}
  \D_1(A\times B\op) \times \D_2(B\times C) \too \D_3(A\times C)\label{eq:combextcanc}
\end{equation}
denoted $(X,Y) \mapsto X\oast_{[B]} Y$, in which $A$ and $C$ are treated externally.
We leave it to the reader to write down the most general form with all three of internal, external, and canceled indexing categories. 

\autoref{thm:coendadj} specializes to give an adjointness property for the tensor product.

\begin{cor}\label{thm:tpadj}
  Let $f\colon A\to B$, and let $\oast\colon  \D_1 \times \D_2 \to \D_3$ be cocontinuous in both variables.
  \begin{enumerate}
  \item If $X\in\D_1(B\op)$ and $Y\in \D_2(A)$, we have a natural isomorphism\label{item:tpadj1}
    \begin{equation}
      (f\op)^* X \oast_{[A]} Y \;\cong\; X \oast_{[B]} f_! Y.
    \end{equation}
  \item If $X\in\D_1(A\op)$ and $Y\in \D_2(B)$, we have a natural isomorphism\label{item:tpadj2}
    \begin{equation}
      X \oast_{[A]} f^* Y \;\cong\; (f\op)_! X \oast_{[B]} Y.
    \end{equation}
  \end{enumerate}
\end{cor}

\begin{proof}
  Pseudonaturality of the external product $\oast$ implies
  \[(f\op)^*X \oast Y \cong (f\op\times 1)^*(X\oast Y),\]
  while its cocontinuity in the second variable implies
  \[X\oast f_! Y \cong (1\times f)_! (X\oast Y).\]
  Thus~\ref{item:tpadj1} follows from \autoref{thm:coendadj}\ref{item:coendadj1}.
  Similarly,~\ref{item:tpadj2} follows from \autoref{thm:coendadj}\ref{item:coendadj2}.
\end{proof}

We remarked in \S\ref{sec:monoidal-derivators} that associativity and unitality of a monoidal structure on a derivator 
can equivalently be expressed in terms of the internal or the external products.
The corresponding notion of ``associativity'' and ``unitality'' for the canceling tensor product of functors can be 
concisely expressed using a \emph{bicategory of profunctors}, as follows.

\begin{thm}\label{thm:bicategory}
  If \D is a monoidal derivator, then there is a bicategory $\cProf(\D)$ described as follows:
  \begin{itemize}
  \item Its objects are small categories.
  \item Its hom-category from $A$ to $B$ is $\D(A\times B\op)$.
  \item Its composition functors are the external-canceling tensor products
    \[ \otimes_{[B]} \colon \D(A\times B\op) \times \D(B\times C\op) \too \D(A\times C\op). \]
  \item The identity 1-cell of a small category $B$ is
    \begin{equation}
      \lI_B\;=\;(t,s)_! \lS_{\tw(B)} \;\cong\; (t,s)_! (\pi_{\tw(B)})^* \lS_{\bbone} \; \in \D(B\times B\op).\label{eq:unit}
    \end{equation}
  \end{itemize}
\end{thm}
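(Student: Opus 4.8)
The plan is to equip the data in the statement with associativity and unit constraints and to check the pentagon and triangle identities; as signalled in \S\ref{sec:outline}, the more delicate verifications surrounding the unit $1$-cells $\lI_B$ are best deferred to Appendix~\ref{sec:bicatunit}, so the main work here is the associativity constraint and an explanation of why $\lI_B$ acts as a unit.

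For associativity, let $X\in\D(A\times B\op)$, $Y\in\D(B\times C\op)$, $Z\in\D(C\times D\op)$. Both iterated composites unfold into iterated coends: using the parametrized form~\eqref{eq:combextcanc} of the canceling tensor product together with cocontinuity of $\otimes$ in each variable (so that an outer coend moves inside an external product, as in \autoref{thm:tpadj}), one obtains
\[(X\otimes_{[B]}Y)\otimes_{[C]}Z\;\cong\;\int^{C}\!\int^{B}(X\otimes Y\otimes Z)\qquad\text{and}\qquad X\otimes_{[B]}(Y\otimes_{[C]}Z)\;\cong\;\int^{B}\!\int^{C}(X\otimes Y\otimes Z),\]
where the triple external product is restricted so that the two copies of $B$ (resp.\ $C$) are diagonally identified before cancelling, and where I have suppressed the external associator of \autoref{rmk:extern-assoc} used to rebracket $X\otimes Y\otimes Z$. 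The Fubini isomorphism $\int^{C}\int^{B}\cong\int^{B\times C}\cong\int^{B}\int^{C}$ of \autoref{lem:Fubini} then furnishes the invertible $2$-cell, visibly natural in $X,Y,Z$. For the pentagon, all five vertices reduce to a single canonical description of a coend over $B\times C\times D$ of the restricted external product $X\otimes Y\otimes Z\otimes W$, so the required commutativity follows from the pentagon for the external associator (which holds by \autoref{rmk:extern-assoc}, being inherited pointwise in \cCat from the monoidal structure of $\D(\bbone)$) together with the coherence of the Fubini isomorphisms; the latter is a consequence of the compatible associativity of products of categories and of the canonical isomorphisms $\tw(B\times C\times D)\cong\tw B\times\tw C\times\tw D$. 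This last point is a routine, if lengthy, diagram chase of restrictions along canonical functors.

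For the unit constraints the key is a derivator co-Yoneda (``density'') isomorphism: for $X\in\D(A\times C\op)$ there are natural isomorphisms $\lI_A\otimes_{[A]}X\cong X\cong X\otimes_{[C]}\lI_C$. Writing $\lI_A=(t,s)_!(\pi_{\tw(A)})^*\lS$ and using cocontinuity of $\otimes$ to extract the left Kan extension $(t,s)_!$ past the external product — and~\eqref{eq:extern-unit} to absorb the unit object — one rewrites $\lI_A\otimes_{[A]}X$ as a composite of restrictions and Kan extensions applied to $X$. Since the coend is $(\pi_{\tw(A)\op})_!(t\op,s\op)^*$ by \autoref{def:coend}, showing this composite is naturally the identity reduces to the homotopy exactness of the relevant comparison functor between $\tw$-type categories — exactly the style of argument in the proof of \autoref{thm:coendadj}, reducing via \cite[Theorem~3.15]{gps:stable} to the homotopy finality of a functor whose comma fibres carry a terminal object or a coreflective subcategory. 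I would carry out this reduction, together with the triangle identity (which likewise becomes homotopy-exactness bookkeeping once the unitors are so described), in Appendix~\ref{sec:bicatunit}. The main obstacle is precisely this unit: associativity is essentially formal given Fubini and the external associator, whereas the co-Yoneda isomorphism and its coherence require genuinely identifying $\tw(A)$ as the carrier of the density of representables.
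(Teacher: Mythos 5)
Your proposal follows essentially the same route as the paper: associativity is constructed by rewriting both iterated composites as a double coend of the external triple product and invoking Fubini (this is the paper's \autoref{thm:bicatassoc}), with the pentagon deduced from the pentagon for $\otimes$ together with mate/Fubini coherence, while the unit isomorphism is reduced to a co-Yoneda/density statement (the paper's \autoref{thm:bicatunit}, of which your $\lI_A\otimes_{[A]}X\cong X$ is the $Y=\lS_\bbone$ specialization) whose proof, deferred to Appendix~\ref{sec:bicatunit}, boils down to a homotopy exactness argument via \cite[Theorem~3.15]{gps:stable}. This matches the paper's decomposition and order of difficulty, so no substantive gap to report.
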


Note that in the definition of $\lI_B$ we use $\tw(B)$, rather than its opposite as we did in \autoref{def:coend}.

\begin{rmk}
  On the one hand, the definition of $\cProf(\D)$ can be regarded as a simple generalization of the usual 
  bicategory of profunctors enriched in a monoidal category (with the objects restricted to unenriched categories rather than enriched ones).
  Indeed, the tensor product of functors is exactly how classical profunctors are composed.
  It is likewise straightforward to verify that
  $(\lI_B)_{(b_2,b_1)} \in\D(\bbone)$ is a coproduct of $B(b_1,b_2)$ copies of the unit object $\lS_\bbone \in \D(\bbone)$, 
  so that in the represented case $\D=y(\bC)$, this definition of $\lI_B$ agrees with the usual identity profunctor of (the free \bC-enriched category on) the ordinary category~$B$.
\end{rmk}

\begin{rmk}
  On the other hand, the construction of $\cProf(\D)$ can also be regarded as analogous to the construction of a bicategory 
  from an indexed monoidal category (a.k.a.\ monoidal fibration) in~\cite{shulman:frbi,ps:indexed}.
  Indeed, a monoidal derivator \emph{is} an indexed monoidal category, where the indexing is over the cartesian monoidal category $\cCat$.
  However, instead of the Beck-Chevalley condition for \emph{pullback} squares, as assumed in \emph{ibid.}, 
  a monoidal derivator satisfies the Beck-Chevalley condition for \emph{comma} squares, by (Der4).
  This is a manifestation of the fact that the objects of \cCat are ``directed''.
  As a consequence, we must replace the diagonal maps $\Delta_A\colon A\to A\times A$ used in \emph{ibid.}\ 
  with either $(t,s)\colon\tw(B) \to B\times B\op$, as used in the definition of the identity 1-cells, or $(t\op,s\op)\colon\tw(B)\op \to B\op\times B$, as used in the definition of composition.
  Similarly, $\pi_A\colon A\to \bbone$ is replaced by $\pi_{\tw(A)}$ or $\pi_{\tw(A)\op}$.
\end{rmk}

What remains for the proof of \autoref{thm:bicategory} is to construct coherent associativity and unitality isomorphisms.
Associativity is easy.

\begin{lem}\label{thm:bicatassoc}
  For any diagrams
  \[X\in \D(A\times B\op), \qquad Y \in \D(B\times C\op), \qquad \text{and} \qquad Z \in \D(C\times D\op), \]
  we have a natural isomorphism
  \[ X\otimes_{[B]} (Y\otimes_{[C]} Z) \;\cong\; (X\otimes_{[B]} Y)\otimes_{[C]} Z \]
  which satisfies the pentagon identity.
\end{lem}

\begin{proof}
  Since the external product $\otimes$ preserves colimits in each variable, it also preserves coends; thus we have a natural isomorphism
  \[ (X\otimes_{[B]} Y) \otimes Z \;=\; \left(\int^B X\otimes Y \right) \otimes Z
  \;\cong\; \int^B( (X\otimes Y) \otimes Z). \]
  Therefore, we have a natural isomorphism
  \begin{equation}
   (X\otimes_{[B]} Y) \otimes_{[C]} Z \;=\; \int^C (X\otimes_{[B]} Y) \otimes Z \;\cong\; \int^C\int^B( (X\otimes Y) \otimes Z).\label{eq:assoc23a}
  \end{equation}
  Similarly, we have 
  \begin{equation}
   X\otimes_{[B]} (Y \otimes_{[C]} Z) \;\cong\; \int^B\int^C( X\otimes (Y \otimes Z)).\label{eq:assoc23b}
  \end{equation}
  Thus, composing~\eqref{eq:assoc23a} and~\eqref{eq:assoc23b} with the associativity isomorphism of $\otimes$ 
  and \autoref{lem:Fubini}, 
  we obtain a natural isomorphism
  \[ (X\otimes_{[B]} Y) \otimes_{[C]} Z \;\cong\;X\otimes_{[B]} (Y \otimes_{[C]} Z). \]
  The pentagon identity for this associativity isomorphism follows from the pentagon identity for $\otimes$, 
  together with the pasting coherence properties of the mates that define the   isomorphisms~\eqref{eq:assoc23a} and~\eqref{eq:assoc23b} and the Fubini isomorphisms.
\end{proof}

\begin{rmk}\label{rmk:tpadj-assoc}
  We also have the following compatibility between the associativity and the isomorphisms of \autoref{thm:tpadj}.
  Suppose $X\in \D(A\times B\op)$, $Y\in \D(B\times C\op)$, and $Z\in \D(C'\times D\op)$, while $f\colon C\to C'$.
  Then the following square of isomorphisms commutes:
  \begin{equation}
  \vcenter{\xymatrix{
      (1\times f\op)_!(X \otimes_{[B]} Y) \otimes_{[C']} Z \ar[r]\ar[d] &
      (X\otimes_{[B]} Y) \otimes_{[C]} (f\times 1)^* Z\ar[dd]\\
      \big(X\otimes_{[B]} (1\times f\op)_! Y\big) \otimes_{[C']} Z \ar[d]\\
      X\otimes_{[B]} \big((1\times f\op)_! Y \otimes_{[C']} Z\big) \ar[r] &
      X\otimes_{[B]} \big(Y \otimes_{[C]} (f\times 1)^* Z\big)
      }}
  \end{equation}
  and similarly in other analogous situations.
  We leave the proof to the reader.
\end{rmk}

The construction of the unitality isomorphism and its coherence is rather involved, so we defer it to Appendix~\ref{sec:bicatunit}.
However, we note for future reference that it actually admits the following generalization.

\begin{lem}\label{thm:bicatunit}
  For any diagrams
  \[X \in \D(A\times B\op) \qquad\text{and}\qquad Y\in \D(C) \]
  we have a natural isomorphism
  \[ X\otimes_{[B]} \big((t,s)_! (\pi_{\tw(B)})^* Y\big) \cong X\otimes Y. \]
\end{lem}

The ordinary unitality isomorphism is the special case when $Y = \lS_{\bbone}$.
Of course, there is also a dual version.

\section{Pushout products in stable monoidal derivators}
\label{sec:mon-triang}

One of the primary uses we have for the tensor product of functors is the construction of pushout products.  
Given diagrams $X\in \D(\bbtwo\op)$ and $X'\in \D(\bbtwo)$ of the form $(y\xot{f} x)$ and $(x'\xto{f'} y')$
their external product $X\otimes X' \in \D(\bbtwo\op\times\bbtwo)$ looks like
\[\vcenter{\xymatrix@-.5pc{
    y\otimes x'\ar@{<-}[r]\ar[d] &
    x\otimes x'\ar[d]\\
    y\otimes y'\ar@{<-}[r] &
    x\otimes y'
  }}\]
while its restriction to $\tw(\bbtwo)\op$ has the form
\begin{equation}
  \vcenter{\xymatrix@-.5pc{
      y\otimes x'\ar@{<-}[r] &
      x\otimes x'\ar[d]\\
      & x\otimes y'.
    }}\label{eq:vdef}
\end{equation}
Thus, the canceling tensor product $X\otimes_{[\bbtwo]} X'$ is the pushout of this cospan, which is the usual definition of a \emph{pushout product} in a monoidal category.

In a stable derivator, this pushout product and the various maps to and from it fit into several distinguished triangles.   
This compatibility is May's axiom (TC3).  
Of course, in a triangulated category the construction above is unavailable, 
so May's version of the following axiom asserts only that there is some object $v$, without specifying it further.

\begin{thm}[TC3]\label{thm:tc3}
  Suppose we have $X=(y\xot{f} x)$ and $X'=(x'\xto{f'} y')$, giving rise to distinguished triangles
  \begin{gather}
    \DT x f y g z h {\Sigma x,}\\
    \DT {x'} {f'} {y'} {g'} {z'} {h'} {\Sigma x'.}
  \end{gather}
  Then if we write $v\coloneqq X\otimes_{[\bbtwo]} X'$, there exist morphisms $p_1$, $p_2$, $p_3$, $j_1$, $j_2$, and $j_3$, along with cofiber sequences giving rise to distinguished triangles
  \begin{gather}
    \DT{y\otimes x'}{p_1}{v}{j_1}{x\otimes z'}{f\otimes h'}{\Sigma(y\otimes x'),}\\
    \DT{\Sigma^{-1}(z\otimes z')}{p_2}{v}{j_2}{y\otimes y'}{-g\otimes g'}{z\otimes z',}\\
    \DT{x\otimes y'}{p_3}{v}{j_3}{z\otimes x'}{h\otimes f'}{\Sigma(x\otimes y'),}
  \end{gather}
  and a coherent diagram of the form of \autoref{fig:tc3}.
\end{thm}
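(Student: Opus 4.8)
The plan is to exhibit the three cofiber sequences, and the coherent diagram of \autoref{fig:tc3}, as restrictions of a single large coherent diagram built from the external product of the two given cofiber sequences by a short sequence of Kan extensions, reducing the only delicate point --- the sign $-g\otimes g'$ in the middle triangle --- to the computation already carried out in \autoref{thm:better-tc1}.

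First I would realize the two given distinguished triangles by coherent cofiber sequences in the sense of \eqref{eq:cofiberseq}, by applying the cofiber construction of \S\ref{sec:derivators} to $f$ and $f'$, and then form their external product. Since $\otimes$ is cocontinuous in each variable it preserves zero objects and cocartesian squares, so the external product is a coherent ``grid of grids'' in which most entries are zero, each horizontal slice has the form of the cofiber sequence of $f$ externally tensored with a fixed object, and each vertical slice has the form of a fixed object externally tensored with the cofiber sequence of $f'$; by \autoref{thm:tc2} every such slice is again a cofiber sequence. In particular the cospan $y\otimes x' \leftarrow x\otimes x' \to x\otimes y'$ of \eqref{eq:vdef} occurs as a subdiagram, so one further left Kan extension adds the pushout $v = X\otimes_{[\bbtwo]}X'$ together with the canonical map $v\to y\otimes y'$ and the cocartesian square realizing $v$ as this pushout.

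The three triangles then follow by pasting cocartesian squares, using the cancellation calculus for homotopy exact squares of \cite{groth:ptstab,gps:stable}. Pasting the pushout square for $v$ with the cocartesian square defining $x\otimes z' = x\otimes\cof(f')$ identifies $\cof(y\otimes x'\to v)$ with $x\otimes z'$, with connecting map $f\otimes h'$; this is the first triangle. Symmetrically $\cof(x\otimes y'\to v)\cong z\otimes x'$ with connecting map $h\otimes f'$, which is the third. Iterating cofibers across the whole grid identifies $\cof(v\to y\otimes y')$ with the total cofiber $z\otimes z'$; taking fibers and using stability, the triangle $\Sigma^{-1}(z\otimes z')\to v\to y\otimes y'\to z\otimes z'$ obtained by de-rotating is distinguished, which is the second. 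Finally, assembling all the Kan extensions performed above into a single coherent diagram on one indexing category produces a diagram of the form of \autoref{fig:tc3}, and restricting it along suitable inclusions recovers the three cofiber sequences.

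The step I expect to be the main obstacle is the sign $-g\otimes g'$ in the middle triangle, together with fixing the orientation and full coherence of \autoref{fig:tc3}. The structure map $y\otimes y'\to z\otimes z'$ of the total cofiber agrees with $g\otimes g'$ only up to a sign, and that sign is exactly the one distinguishing the two orders of iterating cofibers across the square with corners $x\otimes x', x\otimes y', y\otimes x', y\otimes y'$ --- i.e., the transposed-cocartesian-square phenomenon of \autoref{rmk:sign}. It therefore equals $-1$ by the same bookkeeping of squares against their transposes used to prove the last assertion of \autoref{thm:better-tc1}, so no new input is required. The remaining verifications --- that the Kan extensions above compute, pointwise, the objects $x\otimes z'$, $z\otimes x'$, $z\otimes z'$, and $\Sigma^{-1}(z\otimes z')$ --- are routine applications of the cocontinuity of $\otimes$ and of homotopy exactness, exactly as in the proofs of \autoref{thm:tc2} and \autoref{thm:better-tc1}.
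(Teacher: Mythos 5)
Your approach is a genuine alternative to the paper's, so let me compare them and then point out where the gaps lie.

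The paper's proof is deliberately \emph{asymmetric}: it constructs a single object $U\in\D(C\times\bbtwo\op)$, built only from the cofiber sequence of $f$ (via the staircase \autoref{fig:A} and a restriction functor $q\colon C\times\bbtwo\op\to A$), and then forms $U\otimes_{[\bbtwo]}X'\in\D(C)$. Because $\otimes_{[\bbtwo]}$ is applied to a single coherent object of $\shift\D{C}$, the result is automatically a single coherent diagram of shape~$C$, which \emph{is} \autoref{fig:tc3}. All that remains is pointwise identification of the entries $U_c\otimes_{[\bbtwo]}X'$ and of the arrows, and the two sign computations. Your proposal instead takes the external product of \emph{both} cofiber sequences, producing a grid-of-grids, and then attempts to carve out $v$, the $\Sigma^{\pm1}$ corners, and the connecting arrows by restriction and further Kan extension, finally ``assembling all the Kan extensions performed above into a single coherent diagram''. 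This is the symmetric route, and your local calculations (that $\cof(p_1)\cong x\otimes z'$ by pushout pasting, that $\cof(v\to y\otimes y')$ is the total cofiber $z\otimes z'$, that $p_2$ arises by de-rotation) are all correct.

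The gap is precisely the ``assembling'' step that you dispatch in one sentence: it is not clear a priori that the ad hoc sequence of restrictions and Kan extensions you describe lands in $\D(C)$, with exactly the requested underlying diagram, as a \emph{single} coherent object rather than a collection of compatible triangles in $\D(\bbone)$. This is where the $U\otimes_{[\bbtwo]}X'$ device earns its keep: it delivers coherence for free by construction, and is the reason the paper goes to the trouble of building $U$ first. To make your route rigorous you would need to exhibit, explicitly, a functor from some auxiliary shape to $C$ (and a corresponding restriction into the grid-of-grids) along which a single Kan extension produces \autoref{fig:tc3}, which is essentially equivalent to specifying the paper's $q$ anyway. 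A secondary omission: \autoref{fig:tc3} carries a second, independent sign --- the minus on $-1_x\otimes h'$ marking the anticommuting square through $v$, $x\otimes z'$, $z\otimes x'$, $\Sigma(x\otimes x')$. Your discussion addresses only the $-g\otimes g'$ in the middle triangle. Both signs are ``transposed square'' phenomena in the sense of \autoref{rmk:sign}, as you say, but the paper has to track them separately (via the specific orientations in~\eqref{eq:Asquares} and the transposition of $0_3$, $0_4$ in~\eqref{eq:tc3-sign-squares}), and ``no new input is required'' undersells how much bookkeeping is actually involved. Finally, the identifications of the connecting morphisms as $f\otimes h'$, $h\otimes f'$, $\pm g\otimes g'$, and of all twelve peripheral arrows of \autoref{fig:tc3}, are asserted rather than argued; in the paper each of these is a small but nontrivial factorization argument through an intermediate $\bbthree$- or $\Box$-shaped diagram.
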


May's version of (TC3) follows from this when \D is strong, since then we can lift any pair of morphisms in $\D(\bbone)$ to objects of $\D(\bbtwo\op)$ and $\D(\bbtwo)$.

\begin{figure}
  \centering
   \begin{equation}\label{eq:tc3}
    \xymatrix@R=3pc@C=5pc{
      \Sigma^{-1}(y\otimes z')
      \ar[dr]^(.7){\Sigma^{-1}(g\otimes 1_{z'})}
      \ar@(dl,ul)[d]_{\Sigma^{-1}(1_y\otimes h')} &
      x\otimes x' \ar[dl]_(.3){f\otimes 1_{x'}} \ar[dr]^(.3){1_x\otimes f'} &
      \Sigma^{-1}(z\otimes y')
      \ar[dl]_(.7){\Sigma^{-1}(1_z\otimes g')}
      \ar@(dr,ur)[d]^{\Sigma^{-1}(h\otimes 1_{y'})} \\
      y\otimes x'
      \ar@(dl,ul)[dd]_{g\otimes 1_{x'}}
      \ar[ddr]_(.3){1_y\otimes f'}
      \ar[dr]^(.4){p_1} &
      \Sigma^{-1}(z\otimes z')
      \ar[ddl]_(.7){\Sigma^{-1}(1_z\otimes h')}
      \ar[d]^{p_2}
      \ar[ddr]^(.7){\Sigma^{-1}(h\otimes 1_{z'})} &
      x\otimes y'
      \ar[dl]_(.4){p_3}
      \ar[ddl]^(.3){f\otimes 1_{y'}}
      \ar@(dr,ur)[dd]^{1_x\otimes g'} \\
      & v \ar[dl]^(.6){j_3} \ar[d]^{j_2} \ar[dr]_(.6){j_1} \\
      z\otimes x' \ar@(dl,ul)[d]_{1_z\otimes f'} \ar[dr]^(.3){h\otimes 1_{x'}} &
      y\otimes y' \ar[dl]^(.3){g\otimes 1_{y'}} \ar[dr]_(.3){1_y\otimes g'} &
      x\otimes z' \ar[dl]_(.3){-1_x\otimes h'} \ar@(dr,ur)[d]^{f\otimes 1_{z'}} \\
      z\otimes y' &
      \Sigma(x\otimes x') &
      y\otimes z'
    }
  \end{equation}
  \caption{The (TC3) diagram from~\cite{may:traces}}
  \label{fig:tc3}
\end{figure}

\begin{figure}
  \centering
   \begin{equation}\label{eq:tc3pf}
    \xymatrix@R=3pc{
      (0_2 \ot \Sigma^{-1} y) \ar[dr] \ar@(dl,ul)[d] &
      (x \ot 0_1) \ar[dl] \ar[dr] &
      (\Sigma^{-1}z\ot \Sigma^{-1}z) \ar[dl] \ar@(dr,ur)[d] \\
      (y\ot 0_1) \ar@(dl,ul)[dd] \ar[ddr] \ar[dr] &
      (0_2 \ot \Sigma^{-1}z) \ar[ddl] \ar[d] \ar[ddr] &
      (x\ot x) \ar[dl] \ar[ddl] \ar@(dr,ur)[dd] \\
      & (y \ot x) \ar[dl] \ar[d] \ar[dr] \\
      (z\ot 0_3) \ar@(dl,ul)[d] \ar[dr] &
      (y\ot y) \ar[dl] \ar[dr] &
      (0_4 \ot x) \ar[dl] \ar@(dr,ur)[d] \\
      (z\ot z) &
      (\Sigma x\ot 0_3) &
      (0_4 \ot y)
    }
  \end{equation}
  \caption{Diagram from the proof of \autoref{thm:tc3}}
  \label{fig:tc3pf}
\end{figure}

\begin{rmk}
  Since this theorem is our first significant use of many of the properties of derivators in this paper, we have given a very detailed and explicit proof.
  In later proofs that make use of similar techniques, we will leave more details for the reader to fill in.
\end{rmk}

\begin{proof} 
  We will construct from $X$ an object $U\in \D(C\times \bbtwo\op)$, where $C$ is the poset that is the shape of \autoref{fig:tc3}, 
  such that the tensor product $U\otimes_{[\bbtwo]} X'\in\D(C)$ is the desired \autoref{fig:tc3} itself.
  Replacing $X'$ in the diagram above with  the two ``representable'' $\bbtwo$-diagrams
  \[ \lS \to \lS \qquad\text{and}\qquad 0 \to \lS, \]
  Yoneda-type arguments suggest that $U$ should look like \autoref{fig:tc3pf} 
  (as in earlier proofs, each object denoted $0_k$ is a zero object).
%

  We can obtain such an object $U\in \D(C\times \bbtwo\op)$ starting from 
  the diagram 
  \begin{equation}
    \vcenter{\xymatrix{
        x\ar[r]^f\ar[d] &
        y\ar[r]\ar[d]^g &
        0\ar[d]\\
        0 \ar[r] &
        z \ar[r]_h &
        \Sigma x
      }}
  \end{equation}
  in which both squares are bicartesian.
  Using left and right extensions by zero and Kan extension, we can extend this to the diagram of the form shown in \autoref{fig:A}.
  \begin{figure}
    \centering
    \begin{equation}
      \label{eq:A}
      \vcenter{\xymatrix{
          \Sigma^{-1} y \ar[r]^{\Sigma^{-1} g }\ar[d] &
          \Sigma^{-1} z\ar[r]\ar[d]_{\Sigma^{-1}h} &
          0_2 \ar[d] \ar[dr]\\
          0_1 \ar[r]\ar[dr] &
          x\ar[r]^f\ar[d] &
          y\ar[r]\ar[d]^g &
          0_4\ar[d]\\
          & 0_3 \ar[r] \ar[dr] &
          z \ar[r]_h \ar[d] &
          \Sigma x\ar[d]^{\Sigma f}\\
          && 0_5 \ar[r]
          & \Sigma y
        }}
    \end{equation}
    \caption{The origin of $U$}
    \label{fig:A}
  \end{figure}
  Again, all the objects labeled $0_k$ are zero objects.
  The lack of minus signs means that we have identified the objects labeled $\Sigma^{-1} y$, $\Sigma^{-1} z$, $\Sigma x$, and $\Sigma y$ in \autoref{fig:A} using the respective bicartesian squares
  \begin{equation}\label{eq:Asquares}
    \vcenter{\xymatrix{
        \Sigma^{-1} y\ar[r]\ar[d] &
        0_2\ar[d]\\
        0_1\ar[r] &
        y,
      }}
    \qquad
    \vcenter{\xymatrix{
        \Sigma^{-1} z\ar[r]\ar[d] &
        0_2\ar[d]\\
        0_3\ar[r] &
        z,
      }}
    \qquad
    \vcenter{\xymatrix{
        x\ar[r]\ar[d] &
        0_4\ar[d]\\
        0_3\ar[r] &
        \Sigma x,
      }}
    \qquad
    \vcenter{\xymatrix{
        y\ar[r]\ar[d] &
        0_4\ar[d]\\
        0_5\ar[r] &
        \Sigma y
      }}
  \end{equation}
  and not their transposes (see \autoref{rmk:sign}).

  Now, if $A$ is the shape of \autoref{fig:A}, then there is a functor $q\colon C\times \bbtwo\op\to A$ such that restricting \autoref{fig:A} along $q$ produces the diagram in \autoref{fig:tc3pf}.
  The subscripts indicate which zero objects in \autoref{fig:tc3pf} map to which zero objects in \autoref{fig:A}, and this fact uniquely determines $q$ since its domain and codomain are posets.
  We leave it to the reader to verify that $q$ exists.  

  With this choice of $U$ we will show $U\otimes_{[\bbtwo]} X'$ looks like \autoref{fig:tc3}.
  By naturality of coends in parameters, for any $c\in C$, the object $(U\otimes_{[\bbtwo]} X')_c$ can be identified with $U_c \otimes_{[\bbtwo]} X'$. 
  Thus, for instance, the object at the top middle of $U\otimes_{[\bbtwo]} X'$ is the tensor product of $(x\ot 0)\in \D(\bbtwo\op)$ with $X' = (x'\to y') \in\D(\bbtwo)$.
  By the argument above, this is the pushout of
  \[\vcenter{\xymatrix@-.5pc{
      x\otimes x'\ar@{<-}[r] &
      0\otimes x' \mathrlap{\;\cong 0}\ar[d]\\
      & 0\otimes y'\mathrlap{\;\cong 0}
    }}\]
  which is canonically isomorphic to $x\otimes x'$. 
  The same argument identifies the objects $y\otimes x'$, $z\otimes x'$, and $\Sigma(x\otimes x') \cong (\Sigma x) \otimes x'$ of $U\otimes_{[\bbtwo]} X'$.

  Next, consider the object at the bottom left-hand corner of $U\otimes_{[\bbtwo]} X'$: the tensor product of $(z\ot z)$ with $(x'\to y')$.
  This is the pushout of
  \[\vcenter{\xymatrix@C=1.5pc{
      z\otimes x'\ar@{<-}[r] &
      z\otimes x' \ar[d]^{1_z \otimes f'}\\
      & z\otimes y',
    }}\]
  which is canonically isomorphic to $z\otimes y'$ by \cite[Prop.~3.12(2)]{groth:ptstab}.
  The same argument identifies the objects $y\otimes y'$, $x\otimes y'$, and $\Sigma^{-1}(z\otimes y')$.

  Now consider the object at the bottom right-hand corner of $U\otimes_{[\bbtwo]} X'$: the tensor product of $(0\ot y)$ with $(x'\to y')$.
  This is the pushout of
  \[\vcenter{\xymatrix@C=1.5pc{
      \mathllap{0\cong\;} 0\otimes x'\ar@{<-}[r] &
      y\otimes x' \ar[d]^{1_y \otimes f'}\\
      & y\otimes y',
    }}\]
  which is to say the cofiber of $1_y \otimes f'$.
  But $(y\otimes -)$ is cocontinuous, and $z'$ is the cofiber of $f'$, so this is canonically isomorphic to $y\otimes z'$.
  The same argument identifies the objects $x\otimes z'$, $\Sigma^{-1}(z\otimes z')$, and $\Sigma^{-1}(y\otimes z')$.

  This completes our identification of the objects in $U\otimes_{[\bbtwo]} X'$ with their corresponding objects in \autoref{fig:tc3} --- except for $v$, 
  of course, which we simply define to be the appropriate object in $U\otimes_{[\bbtwo]} X'$.
  This is the tensor product of $(y\ot x)$ with $(x'\to y')$, which is the pushout of~\eqref{eq:vdef} as stated.

 The arrows $p_1$, $p_2$, $p_3$, $j_1$, $j_2$, and $j_3$ are defined by restriction from $U\otimes_{[\bbtwo]} X'$.
  Naturality of coends implies that for any morphism $\gamma\colon c\to c'$ in $C$, the arrow $(U\otimes_{[\bbtwo]} X')_\gamma \in \D(\bbtwo)$ 
  can be identified with $U_\gamma \otimes_{[\bbtwo]} X'$, which is a left Kan extension along the projection $(\bbtwo \times \mathord\ulcorner) \to\bbtwo$.
  For instance, the arrow towards the top left of $U\otimes_{[\bbtwo]} X'$ 
  from $x\otimes x'\to  y\otimes x'$ is obtained from the solid-arrow 
  $(\bbtwo \times\mathord\ulcorner)$-diagram below by extending to the dotted arrows:
  \[ \vcenter{\xymatrix@-.5pc{
      x\otimes x' \ar@{<-}[rr] \ar[dr]|{f\otimes 1_{x'}} \ar@{.>}[dd] &&
      0_1\otimes x' \mathrlap{\;\cong 0} \ar'[d][dd] \ar[dr] \\
      & y\otimes x' \ar@{<-}[rr] \ar@{.>}[dd]
      && 0_1\otimes x' \mathrlap{\;\cong 0} \ar[dd]\\
      x\otimes x' \ar@{<.}'[r][rr] \ar@{.>}[dr]_{f \otimes 1_{x'}}
      && 0_1\otimes y' \mathrlap{\;\cong 0} \ar[dr] \\
      & y\otimes x' \ar@{<.}[rr]
      && 0_1\otimes y' \mathrlap{\;\cong 0.}
    }}\]
  Up to isomorphism, the solid arrow diagram is obtained by restriction from a $\bbthree$-diagram
  \[ 0 \to x\otimes x' \xto {f\otimes 1_{x'}} y\otimes x' \]
  along a particular functor $\bbtwo\times\ulcorner \to \bbthree$.
  However, this functor extends to a functor $\bbtwo\times\Box \to \bbthree$, restriction along which produces 
  some diagram looking like the entire cube above, with the morphism $f\otimes 1_{x'}$ as labeled.
  By \cite[Prop.~3.12(2)]{groth:ptstab} 
  the front and back face of this cube are cocartesian, and hence by \cite[Corollary~2.6]{groth:ptstab} 
  it must be \emph{the} left Kan extension which computes the desired morphism in \autoref{fig:tc3}.
  Thus, that morphism must be $f\otimes 1_{x'}$, as desired.

  Similar arguments easily identify the morphisms  $g\otimes 1_{x'}$, $h\otimes 1_{x'}$, $\Sigma^{-1} (h\otimes 1_{y'})$, 
  $f\otimes 1_{y'}$, $g\otimes 1_{y'}$, $\Sigma^{-1}(g\otimes 1_{z'})$, $\Sigma^{-1}(h\otimes 1_{z'})$, and $f\otimes 1_{z'}$. 
  In some cases, we must restrict from a $\Box$-diagram instead of a $\bbthree$-diagram, such as for $g\otimes 1_{x'}$ where we have
  \begin{equation}
    \vcenter{\xymatrix@-.5pc{
        \mathllap{0\cong\;}0_1\otimes x'\ar[r]\ar[d] &
        x\otimes x'\ar[d]^{f\otimes 1_{x'}}\\
        \mathllap{0\cong\;}0_3\otimes x'\ar[r] &
        y\otimes x'.
      }}
  \end{equation}
  But since any zero object is uniquely isomorphic to any other zero object, this makes no essential difference.

  Now consider the morphism  $1_x\otimes f'$.
  The above argument implies that it is the left Kan extension to $\bbtwo$ of the solid arrow diagram
  \begin{equation}
  \vcenter{\xymatrix@-.5pc{
      x\otimes x' \ar@{<-}[rr] \ar[dr] \ar@{.>}[dd]&&
      0_1\otimes x' \mathrlap{\;\cong 0} \ar'[d][dd] \ar[dr] \\
      & x\otimes x' \ar@{<-}[rr]\ar@{.>}[dd]|(.3){1_x\otimes f'}
      && x\otimes x' \ar[dd]^{1_x\otimes f'}\\
      x\otimes x'\ar@{.>}[dr]_-{1_x\otimes f'}
      && 0_1\otimes y' \mathrlap{\;\cong 0} \ar[dr]\ar@{.>}'[l][ll] \\
      & x\otimes y'
      && x\otimes y'.\ar@{.>}[ll]
    }}\label{eq:1timesfprime}
  \end{equation}
  The solid arrow part of~\eqref{eq:1timesfprime} is obtained by restriction from a diagram of the form
  \[ 0 \too x\otimes x' \xto{1_x\otimes f'} x\otimes y'.\]
  We can obtain an entire cube of the form~\eqref{eq:1timesfprime}, with the morphism $1_x\otimes f'$ as labeled, by restricting along a further functor $\bbtwo\times\Box \to \bbthree$.
  As before that this cube is cocartesian, hence the desired morphism is $1_x\otimes f'$.
  The same argument applies for~$1_y\otimes f'$ and~$1_z\otimes f'$.

  The morphisms $\Sigma^{-1}(1_z\otimes g')$, $1_x\otimes g'$, and $1_y\otimes g'$ are likewise similar to each other.
  For~$1_y\otimes g'$, we use the following cube
  \[ \vcenter{\xymatrix@-.5pc{
      y\otimes x' \ar@{<-}[rr] \ar[dr] \ar@{.>}[dd]_{1_y\otimes f'}&&
      y\otimes x' \ar'[d][dd]^{1_y\otimes f'} \ar[dr] \\
      & \mathllap{0\cong\;} 0_4 \otimes x' \ar@{<-}[rr]\ar@{.>}[dd]
      && y\otimes x' \ar[dd]^{1_y\otimes f'}\\
      y\otimes y'\ar@{.>}[dr]_-{1_y\otimes g'}&& y\otimes y' \ar[dr]\ar@{.>}'[l][ll] \\
      & y\otimes z'&& y\otimes y'\ar@{.>}[ll]^{1_y\otimes g'}
    }}\]
  which can be obtained by restriction from the cocartesian square
  \begin{equation}
    \vcenter{\xymatrix{
        y\otimes x'\ar[r]\ar[d]_{1_y\otimes f'} &
        0_4 \otimes x' \mathrlap{\;\cong 0}\ar[d]\\
        y\otimes y'\ar[r]_{1_y\otimes g'} &
        y\otimes z'.
      }}
  \end{equation}
  The other two are analogous.

  For $\Sigma^{-1}(1_y\otimes h')$, the relevant cube is
  \[ \vcenter{\xymatrix@-.5pc{
      \mathllap{0\cong\;} 0_2\otimes x' \ar@{<-}[rr] \ar[dr] \ar@{.>}[dd]&&
      \Sigma^{-1}y\otimes x' \ar'[d][dd] \ar[dr] \\
      & y \otimes x' \ar@{<-}[rr]\ar@{.>}[dd]
      && 0_1\otimes x' \mathrlap{\;\cong 0}\ar[dd]\\
      \Sigma^{-1}y\otimes z'\ar@{.>}[dr]_-?
      && \Sigma^{-1}y\otimes y' \ar[dr]\ar@{.>}'[l][ll] \\
      & y\otimes x'&& 0_1\otimes y'\mathrlap{\;\cong 0.}\ar@{.>}[ll]
    }}\]
  As in all the previous cases, the back and the front faces are cocartesian.
  However, in this case  the top face is also cocartesian by definition of~$U$ as a restriction of \autoref{fig:A}.
  But then  the bottom square is cocartesian by \cite[Prop.~3.13]{groth:ptstab}. 
  Using the definition of the triangulation, we can use the back and the bottom faces in order to identify the morphism labeled by~``?'' as~$\Sigma^{-1}(1_y\otimes h')$.
  The remaining two cases $\Sigma^{-1}(1_z\otimes h')$ and $-1_x\otimes h'$ can be established in a similar way.
  
  The one minus sign in \autoref{fig:tc3}
  appears on the morphism $-1_x\otimes h'$, but it should really not be regarded as fundamentally attached to that morphism; rather, 
  it is merely a marker that the square with vertices $v$, $x\otimes z'$, $z\otimes x'$, and $\Sigma(x\otimes x')$ anticommutes rather than commutes.
  In identifying the morphisms labeled $h\otimes 1_{x'}$ and $1_x\otimes h'$, we have identified their common codomain 
  with $\Sigma(x\otimes x')$ in two different ways, which differ by the transposition automorphism $\sigma$ of $\Box$.  
  To see this, let us restrict $U$ to the $(\lrcorner\times \bbtwo\op)$-shaped diagram corresponding to these two morphisms.
  After taking the external product with $X'$ and pulling back to $\tw(\bbtwo)\op$, we obtain the solid arrow diagram in \autoref{fig:tc3ac}, 
  with the morphisms of interest being obtained by left extension to the dotted arrows.
  \begin{figure}
    \centering
    \begin{equation}
      \vcenter{\xymatrix@-.5pc{
          0_3 \otimes x' \ar[rr] \ar[dd] \ar[dr]
          && 0_3 \otimes x' \ar@{<-}[rr] \ar[dr] \ar'[d][dd]
          && x\otimes x' \ar'[d][dd] \ar[dr] \\
          & z\otimes x' \ar@{.>}[dd] \ar[rr]
          && (\Sigma x) \otimes x' \ar@{<-}[rr] \ar@{.>}[dd]
          && 0_4 \otimes x'\ar@{.>}[dd]\\
          0_3 \otimes y' \ar@{.>}[dr] \ar'[r][rr]
          && 0_3\otimes y' \ar@{.>}[dr] \ar@{<-}'[r][rr]
          && x \otimes y' \ar@{.>}[dr] \\
          & z\otimes x' \ar@{.>}[rr]_{\pm h\otimes 1_{x'}}
          && \Sigma (x \otimes x') \ar@{<.}[rr]_{\pm 1_x \otimes h'}
          && x\otimes z'.
        }}\label{eq:tc3-sign-cube}
    \end{equation}
    \caption{Identifying the anticommuting square in (TC3)}
    \label{fig:tc3ac}
  \end{figure}

  When determining the morphism labeled $h\otimes 1_{x'}$, we identified the middle object with $\Sigma(x\otimes x') \cong (\Sigma x)\otimes x'$ 
  by way of the top face of the right-hand cube in \autoref{fig:tc3ac} (together with the very middle face, which is trivial).
  This square is shown again on the left below:
  \begin{equation}\label{eq:tc3-sign-squares}
    \vcenter{\xymatrix{
        x\otimes x'\ar[r]\ar[d] &
        0_4 \otimes x'\ar[d]\\
        0_3 \otimes x'\ar[r] &
        (\Sigma x) \otimes x'
      }}
    \qquad
    \vcenter{\xymatrix{
        x\otimes x'\ar[r]\ar[d] &
        x\otimes y'\ar[r]\ar[d] &
        0_3 \otimes y'\ar[d]\\
        0_4 \otimes x'\ar[r] &
        x\otimes z'\ar[r] &
        x\otimes (\Sigma x')
      }}
  \end{equation}
  It is oriented as shown in order to match~\eqref{eq:Asquares}.
  On the other hand, when determining the morphism labeled $1_x \otimes h'$, we identified the middle object with 
  $\Sigma(x\otimes x') \cong x\otimes (\Sigma x')$ using the cofiber sequence shown on the right in~\eqref{eq:tc3-sign-squares}, 
  which consists of the right and bottom faces of the right-hand cube in \autoref{fig:tc3ac}.
  Note that the two zero objects $0_3$ and $0_4$ have been transposed between the two squares in~\eqref{eq:tc3-sign-squares}.
  Thus, these two identifications of the middle object with $\Sigma(x\otimes x')$ differ by a minus sign.

  It remains to construct the distinguished triangles.
  Consider the following diagrams of shape $(\bbthree\times\bbtwo\op)$, obtained from $U$ by restriction.
  \begin{equation}
    \vcenter{\xymatrix@R=1pc@C=3pc{
        (y\ot 0_1)\ar[r] &
        (y\ot x)\ar[r] &
        (0_4 \ot x)\\
        (0_2\ot \Sigma^{-1} z)\ar[r] &
        (y\ot x)\ar[r] &
        (y\ot y)\\
        (x\ot x)\ar[r] &
        (y\ot x)\ar[r] &
        (z\ot 0_3)
      }}
  \end{equation}
  Applying the functor $(- \otimes_{[\bbtwo]} X')$ to these diagrams yields the three $\bbthree$-shaped diagrams $(\bullet \xto{p_i} \bullet \xto{j_i} \bullet)$ in \autoref{fig:tc3}.
  Since $U$ is a restriction of \autoref{fig:A}, so is each of these diagrams.
  Moreover, each one also underlies a cofiber sequence in $\shift\D{\bbtwo\op}$ which may also be obtained from \autoref{fig:A} by restriction; these cofiber sequences are shown in \autoref{fig:tc3cofibers}.
  \begin{figure}
    \begin{gather*}
    \vcenter{\xymatrix@-.5pc{
        (y\ot 0_1)\ar[r]\ar[d] &
        (y\ot x)\ar[r]\ar[d] &
        (0_5 \ot 0_5)\ar[d]\\
        (0_4 \ot 0_1)\ar[r] &
        (0_4\ot x)\ar[r] &
        (\Sigma y \ot 0_5)
      }}\\
    \vcenter{\xymatrix@-.5pc{
        (0_2 \ot \Sigma^{-1}z)\ar[r]\ar[d] &
        (y\ot x)\ar[r]\ar[d] &
        (0_5 \ot 0_3)\ar[d]\\
        (0_2 \ot 0_2)\ar[r] &
        (y\ot y)\ar[r] &
        (0_5 \ot z)
      }}\\
    \vcenter{\xymatrix@-.5pc{
        (x\ot x)\ar[r]\ar[d] &
        (y\ot x)\ar[r]\ar[d] &
        (0_4 \ot 0_4)\ar[d]\\
        (0_3 \ot 0_3)\ar[r] &
        (z\ot 0_3)\ar[r] &
        (\Sigma x\ot \Sigma x)
      }}
    \end{gather*}
    \caption{Cofiber sequences for (TC3)}
    \label{fig:tc3cofibers}
  \end{figure}
  If we tensor these cofiber sequences with $X'$, we obtain three cofiber sequences in \D that will give rise to our desired distinguished triangles.
  It remains to identify the third morphism in each such sequence.

  The morphism $x\otimes z'\to y\otimes \Sigma x'$ 
  is obtained by tensoring
  \begin{equation}
    (0_4\ot x) \to (\Sigma y \ot 0_5)\label{eq:tc3-p1j1}
  \end{equation}
  with $X'=(x'\to y')$.
  But~\eqref{eq:tc3-p1j1} factors as
  \begin{equation}
    (0_4\ot x) \to (\Sigma x \ot 0_3) \to (\Sigma y \ot 0_5),
  \end{equation}
  and when tensored with $X'$ these two morphisms yield $1_x\otimes h'$ and $\Sigma f \otimes 1_{x'}$, by the sort of arguments given above.
  Thus, their composite is
  \[ (\Sigma f \otimes 1_{x'}) \circ (1_x\otimes h') =
  (f \otimes 1_{\Sigma x'}) \circ (1_x\otimes h') =
  f\otimes h'.
  \]
  Similarly, $(y\ot y) \to (0_5\ot z)$ factors through $(0_5\ot y)$, yielding $1_y\otimes g'$ and $g\otimes 1_{z'}$, while $(z\ot 0_3) \to (\Sigma x\ot \Sigma x)$ factors through $(\Sigma x\ot 0_3)$, yielding $h\otimes 1_{x'}$ and $1_{\Sigma x}\otimes f'$.
  There is a minus sign in the second distinguished triangle because the outer rectangle of the second sequence in \autoref{fig:tc3cofibers} involves the square
  \begin{equation}
  \vcenter{\xymatrix@-.5pc{
      \Sigma^{-1} z\ar[r]\ar[d] &
      0_3\ar[d]\\
      0_2\ar[r] &
      z
      }}
  \end{equation}
  which is transposed from the square used in~\eqref{eq:Asquares} to identify $\Sigma^{-1}z$.
\end{proof}

Henceforth, we will use the notations $v$, $j_1$, $p_1$, etc.\ to refer to the \emph{specific} objects and morphisms 
constructed in the proof of \autoref{thm:tc3}, which come equipped with certain coherent diagrams and cocartesian squares.
We will also abuse notation by denoting the lifts $(y\xleftarrow{f}x)\in \D(\bbtwo\op)$ and $(x'\xto{f'}y')\in\D(\bbtwo)$ by $f$ and $f'$, so that we can write $v = f\otimes_{[\bbtwo]} f'$.

We stress again that $v$, $j_1$, $p_1$, etc.\ are determined up to unique canonical isomorphism as soon as we choose these objects of $\D(\bbtwo\op)$ and $\D(\bbtwo)$.
This is in contrast to the situation in~\cite[\S4]{may:traces}, where the axioms can only assert that some objects and morphisms exist, without any uniqueness.
This is the main advantage of derivators over triangulated categories.
 
The following lemma, combined with the Mayer-Vietoris sequence, \cite[Theorem 6.1]{gps:stable}, implies that we actually have the ``stronger'' form of (TC3) from~\cite[Definition 4.11]{may:traces}.

\begin{lem}\label{thm:tc3-six-squares}
  In the coherent diagram \autoref{fig:tc3} constructed in \autoref{thm:tc3}, the six squares that have $v$ as a vertex are cocartesian.
\end{lem}
\begin{proof}
  Since the tensor product $\otimes_{\bbtwo}$ is cocontinuous in both variables, it suffices to show that the corresponding squares in \autoref{fig:tc3pf} are all cocartesian.
  But all of these either occur in a cofiber sequence or are constant in one direction.
\end{proof}

\autoref{thm:tc3} does not require the monoidal structure of \D to be symmetric.
However, if it \emph{is} symmetric, the symmetry $\symm$ induces an isomorphism $\gamma$ between $f\otimes_{[\bbtwo]} f'$ and $f'\otimes_{[\bbtwo]} f$.
(Technically, as $f$ and $f'$ denote objects of $\D(\bbtwo\op)$ and $\D(\bbtwo)$, the isomorphism $\bbtwo\op\cong\bbtwo$ is involved here as well.)
If $f\otimes_{[\bbtwo]} f'$ is denoted by $v$ as above, then again following~\cite{may:traces}, we denote $f'\otimes_{[\bbtwo]} f$ by $\vbar$.

Likewise, we denote the maps $p_i$ and $j_i$ associated to $\vbar$ as $\pbar_i$ and $\jbar_i$.
The asymmetry of the proof of \autoref{thm:tc3} means that an argument is required to deduce that $\gamma$ and~$\symm$ are compatible with $p_i$, $j_i$, $\pbar_i$, and $\jbar_i$.

\begin{lem}\label{thm:involution}
  For objects and morphisms as described in the paragraphs above, we have
  \begin{alignat}{3}
    \gamma \circ p_1 &= \pbar_3 \circ \symm, &\qquad
    \gamma \circ p_2 &= \pbar_2 \circ \symm, &\qquad
    \gamma \circ p_3 &= \pbar_1 \circ \symm,\\
    \symm \circ j_1 &= \jbar_3 \circ \gamma, &\qquad
    \symm \circ j_2 &= \jbar_2 \circ \gamma, &\qquad
    \symm \circ j_3 &= \jbar_1 \circ \gamma.
  \end{alignat}
\end{lem}
\begin{proof}
  Consider the first equation, involving $p_1$ and $\pbar_3$.
  By two-variable functoriality of $\otimes_{[\bbtwo]}$, the following square commutes
  \begin{equation}
    \vcenter{\xymatrix{
      (y\ot 0) \otimes_{[\bbtwo]} (x'\to x')\ar[r]\ar[d] &
      (y\ot x) \otimes_{[\bbtwo]} (x'\to x')\ar[d]\\
      (y\ot 0) \otimes_{[\bbtwo]} (x'\to y')\ar[r] &
      (y\ot x) \otimes_{[\bbtwo]} (x'\to y').
      }}
  \end{equation}
  Up to isomorphism we can identify this with the left-hand square below
  \begin{equation}
    \vcenter{\xymatrix{
        y\otimes x'\ar[r]\ar[d] &
        y\otimes x'\ar[d] \ar[r]^{\symm} &
        x'\otimes y \ar[d]^{\pbar_3}\\
        y\otimes x'\ar[r]_-{p_1} &
        v \ar[r]_\gamma &
        \vbar
      }}
  \end{equation}
  in which the left vertical map and top left horizontal map are the identity.
  This can be proven by considering some cocartesian squares in $\D(\bbtwo)$, analogously to how we identified all the relevant morphisms in the (TC3) diagram.
  Thus, we have $\gamma \circ p_1 = \pbar_3 \circ \symm$ as desired.
  The other five proofs are essentially identical, the only difference being in the details of why the unlabeled maps are identities.
\end{proof}

In the language of~\cite{may:traces}, \autoref{thm:involution} says that $\vbar$ with its specified (TC3) diagram is an \textbf{involution} of $v$ with its specified (TC3) diagram.
In~\cite{may:traces}, the approach to this compatibility was slightly different: starting from arbitrary (TC3) diagrams for $f$ and $f'$ and 
for $f'$ and $f$, by choosing the isomorphism $\gamma\colon v\toiso \vbar$ cleverly and modifying some of the morphisms in the latter diagram, 
it was transformed into a possibly-different (TC3) diagram for $f'$ and $f$ which satisfied the conclusions of \autoref{thm:involution}.
In our case, by contrast, we have canonically specified diagrams of all sorts and a canonically specified $\gamma\colon v\toiso \vbar$, 
which automatically satisfy the desired compatibility conditions.

\section{Rotated pushout products}\label{sec:rotated}
We may also consider rotating one or the other of the input triangles in (TC3).
The following lemma implies that it doesn't matter which one we rotate.
Recall the functor $\cof\colon\D(\bbtwo)\to\D(\bbtwo)$ from \eqref{eq:cofibersquare}.

\begin{lem}\label{thm:tc3-rotate}
  Given $(y\xleftarrow{f}x)\in \D(\bbtwo\op)$ and $(x'\xto{f'}y')\in\D(\bbtwo)$, we have a canonical isomorphism
  \begin{equation}\label{eq:tc3-rotate}
    \cof(f) \otimes_{[\bbtwo]} f'  \;\cong\; f\otimes_{[\bbtwo]} \cof(f').
  \end{equation}
  This isomorphism identifies $p_1$, $p_2$, and $p_3$ on the left with $p_2$, $p_3$, and $p_1$ on the right, respectively, and similarly for the $j$'s.
\end{lem}
\begin{proof}
  We extend $f$ and $f'$ to cocartesian squares
  \begin{equation}
    \vcenter{\xymatrix@C=3pc{
        z\ar@{<-}[r]^{\cof(f)}\ar@{<-}[d] &
        y \ar@{<-}[d]^{f}\\
        0\ar@{<-}[r] &
        x
      }}
    \qquad\text{and}\qquad
    \vcenter{\xymatrix@C=3pc{
        x' \ar[r]^{f'}\ar[d] &
        y' \ar[d]^{\cof(f')}\\
        0\ar[r] &
        z'.
      }}
  \end{equation}
  Denote these objects by $Q\in\D(\Box\op)$ and $Q'\in\D(\Box)$, respectively.
  Now since $Q'$ is cocartesian, we have $Q' \cong (i_\ulcorner)_!(i_\ulcorner)^*Q'$.
  Thus, by \autoref{thm:tpadj} we have
  \begin{align}
    Q\otimes_{[\Box]} Q'
    &\cong Q \otimes_{[\Box]} (i_\ulcorner)_!(i_\ulcorner)^*Q'\\
    &\cong (i_\ulcorner\op)^*Q \otimes_{[\ulcorner]} (i_\ulcorner)^*Q'.
  \end{align}
  On the other hand, using \cite[Remark 4.2]{gps:stable} we also have $(i_\ulcorner\op)^*Q \cong (i\op)_! (\cof(f))$, where $i\colon\bbtwo\into\ulcorner$ is the inclusion of $(0,0) \to (0,1)$.
  Thus, by \autoref{thm:tpadj} again, we have
  \begin{align}
    (i_\ulcorner\op)^*Q \otimes_{[\ulcorner]} (i_\ulcorner)^*Q'
    &\cong
    (i\op)_! (\cof(f)) \otimes_{[\ulcorner]} (i_\ulcorner)^*Q'\\
    &\cong
    \cof(f) \otimes_{[\bbtwo]} i^* (i_\ulcorner)^*Q'\\
    &\cong
    \cof(f) \otimes_{[\bbtwo]} f'.
  \end{align}
  A dual argument shows $Q\otimes_{[\Box]} Q' \cong f \otimes_{[\bbtwo]} \cof(f')$, yielding~\eqref{eq:tc3-rotate}.

  To identify the $p$'s and $j$'s, we use the functoriality of this identification in~$f$.
  According to the proof of \autoref{thm:tc3}, $p_1$ and $j_1$ for $f\otimes_{[\bbtwo]} \cof(f')$ are obtained by tensoring $\cof(f')$ with the diagram on the left below
  \begin{equation}
    \vcenter{\xymatrix@C=3pc{
        0\ar@{<-}[r]\ar@{<-}[d] &
        x \ar@{<-}[d]\\
        y\ar@{<-}[r]^-f\ar@{<-}[d] &
        x \ar@{<-}[d]\\
        y\ar@{<-}[r] &
        0,
      }}
    \qquad\qquad
    \vcenter{\xymatrix@C=3pc{
        \Sigma x\ar@{<-}[r]\ar@{<-}[d] &
        0 \ar@{<-}[d]\\
        z\ar@{<-}[r]^{\cof(f)}\ar@{<-}[d] &
        y \ar@{<-}[d]\\
        y\ar@{<-}[r] &
        y.
      }}
  \end{equation}
  But by the above argument, this is isomorphic to what we obtain by tensoring the diagram on the right with~$f'$.
  Again, the proof of \autoref{thm:tc3} shows this yields~$p_3$ and~$j_3$ for~$\cof(f)\otimes_{[\bbtwo]}f'$, as intended.
  The other two cases are established in a similar way.
\end{proof}

Following~\cite{may:traces}, we denote the common value of $\cof(f) \otimes_{[\bbtwo]} f'$ and $f\otimes_{[\bbtwo]} \cof(f')$ by $w$.
Using \autoref{thm:tc3-rotate} and the natural isomorphism from  $\cof^3\colon \D(\bbtwo)\to\D(\bbtwo)$ to the suspension functor 
$\Sigma$ of $\D^\bbtwo$ in \cite[Lemma 5.13]{gps:stable}, $w$ can equivalently be defined as
\begin{align}
  w &= \cof(f) \otimes_{[\bbtwo]} f'\\
  &\cong \cof^2(\fib(f)) \otimes_{[\bbtwo]} \cof(\fib(f'))\\
  &\cong \cof^3(\fib(f)) \otimes_{[\bbtwo]} \fib(f')\\
  &\cong \Sigma(\fib(f)) \otimes_{[\bbtwo]} \fib(f')\\
  &\cong \Sigma(\fib(f) \otimes_{[\bbtwo]} \fib(f')).
\end{align}
The last is how $w$ is defined in~\cite{may:traces}: by rotating both triangles \emph{backwards}, obtaining $(-\Sigma^{-1}h,f,g)$ 
and $(-\Sigma^{-1}h',f',g')$, constructing their ordinary pushout product, then suspending the result once.
The object $w$ comes with distinguished triangles
\begin{gather}
  \DT{x\otimes z'}{k_1}{w}{q_1}{z\otimes y'}{h\otimes g'}{\Sigma(x\otimes z'),}\\
  \DTl{y\otimes y'}{k_2}{w}{q_2}{\Sigma(x\otimes x')}{-\Sigma(f\otimes f')}{\Sigma(y\otimes y'),}\label{eq:tc4dt2}\\
  \DT{z\otimes x'}{k_3}{w}{q_3}{y\otimes z'}{g\otimes h'}{\Sigma(z\otimes x')}
\end{gather}
equipped with coherent diagrams similar to those in (TC3).

In~\cite{may:traces} the existence of $w$ and its associated data is called (TC3$'$).
His next axiom is about the interaction of (TC3) and (TC3$'$).

\begin{thm}[TC4]\label{thm:tc4}
  With all the data chosen as above, there is a cocartesian square
  \[\vcenter{\xymatrix{
      v\ar[r]^{j_2}\ar[d]_{(j_1,j_3)} &
      y\otimes y'\ar[d]^{k_2}\\
      (x\otimes z') \oplus (z\otimes x')\ar[r]_-{[k_1,k_3]} &
      w.
    }}
  \]
\end{thm}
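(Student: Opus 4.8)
The plan is to realise the square as the image, under a cocontinuous morphism of derivators, of a cocartesian square living one level up in the shifted derivator $\shift{\D}{\bbtwo\op}$, and then to invoke cocontinuity. Write $f$ for the chosen lift $(y\xleftarrow{f}x)\in\D(\bbtwo\op)$ and $\cof(f)$ for the object $(z\xleftarrow{g}y)\in\D(\bbtwo\op)$ that occurs in \autoref{fig:A}, so that by construction $v=f\otimes_{[\bbtwo]}f'$ and $w=\cof(f)\otimes_{[\bbtwo]}f'$. The morphism $(-)\otimes_{[\bbtwo]}f'\colon\shift{\D}{\bbtwo\op}\to\D$ is cocontinuous, since $\otimes$ is cocontinuous in its first variable and $\int^{\bbtwo}$ is a composite of a restriction and a left Kan extension; in particular it preserves cocartesian squares and coproducts. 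First I would write down the intended source square $\Phi\in\D(\bbtwo\op\times\Box)$: reading $\Box$ as in the statement, its four corners are $f$, the constant diagram $(y\xleftarrow{1_y}y)$, the biproduct $(0\ot x)\oplus(z\ot 0)$, and $\cof(f)$, and its edges are the evident lifts of $j_2$, $(j_1,j_3)$, $k_2$ and $[k_1,k_3]$. All of this data, with its coherences, is produced exactly as in the proofs of \autoref{thm:tc3} and \autoref{thm:tc3-rotate}: the maps $f\to(0\ot x)$, $f\to(z\ot 0)$ and $f\to(y\xleftarrow{1_y}y)$ are restrictions of \autoref{fig:A}; using the biproduct structure the first two combine to a single map $f\to(0\ot x)\oplus(z\ot 0)$, and left Kan extending the span $\big((0\ot x)\oplus(z\ot 0)\big)\leftarrow f\to(y\xleftarrow{1_y}y)$ along $\ulcorner\into\Box$ produces $\Phi$. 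Applying $(-)\otimes_{[\bbtwo]}f'$ to $\Phi$ then reproduces the square in the statement: it carries the corners to $v$, $y\otimes y'$, $(x\otimes z')\oplus(z\otimes x')$ and $w$ (using preservation of coproducts for the third) and the edges to $j_2$, $(j_1,j_3)$, $k_2$ and $[k_1,k_3]$, by the identifications of all these morphisms already carried out in the proofs of \autoref{thm:tc3} and \autoref{thm:tc3-rotate}.

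It then remains only to check that $\Phi$ is cocartesian in $\shift{\D}{\bbtwo\op}$. Since $\D$, hence $\shift{\D}{\bbtwo\op}$, is stable, a coherent square there is cocartesian as soon as its restriction to each object of $\bbtwo\op$ is cocartesian: the total cofibre of a square is built from colimits, hence is preserved by the restriction functors $a^*$ for $a\in\bbtwo\op$, so by (Der2) the vanishing of the total cofibre --- equivalently, the square being cocartesian --- is detected objectwise over $\bbtwo\op$ (cf.\ \cite[Theorem~6.1]{gps:stable} and \cite[Prop.~3.12]{groth:ptstab}). Restricting $\Phi$ to $0\in\bbtwo\op$ gives a square with corners $y,y,z,z$ whose two horizontal edges are identities, and restricting to $1\in\bbtwo\op$ gives a square with corners $x,y,x,y$ whose two vertical edges are identities; in either case \cite[Prop.~3.12]{groth:ptstab} shows the square is cocartesian. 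Therefore $\Phi$ is cocartesian, and since $(-)\otimes_{[\bbtwo]}f'$ preserves cocartesian squares, so is the square in the statement.

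The real work --- and the only genuine obstacle --- is the bookkeeping in the first paragraph: one must verify that the morphisms $j_1,\dots,j_3,k_1,\dots,k_3$, originally extracted from \autoref{fig:A} via several separate cofibre sequences in the proofs of \autoref{thm:tc3} and \autoref{thm:tc3-rotate}, do assemble into the single coherent diagram $\Phi$, and in particular that the left Kan extension returns $\cof(f)$ as the remaining corner with comparison maps the lifts of $k_1$, $k_2$, $k_3$. The conceptual input --- cocontinuity of the canceling tensor product of functors together with objectwise detection of cocartesian squares in a stable derivator --- is slight; it is the combinatorics of the indexing posets that is tedious, and, following the earlier pattern of the paper, I would indicate the relevant restrictions and Kan extensions and leave the remaining verifications to the reader.
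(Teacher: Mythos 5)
Your proof takes essentially the same approach as the paper: build a cocartesian $\Box$-shaped diagram in the shifted derivator $\shift{\D}{\bbtwo\op}$ whose four corners are $f$, $(y\ot y)$, $(0\ot x)\oplus(z\ot 0)$, and $\cof(f)$, and then apply the cocontinuous morphism $(-)\otimes_{[\bbtwo]}f'$. The one expository slip is a mild circularity: you \emph{define} $\Phi$ by left Kan extension along $\ulcorner\into\Box$ --- which makes cocartesianness automatic --- and then say it ``remains to check $\Phi$ is cocartesian''; what actually remains is to identify the fourth corner of that Kan extension as $\cof(f)$, which your pointwise computation effectively does, whereas the paper sidesteps the issue by obtaining its square \eqref{eq:tc4input} from the start as a \emph{restriction} of the cofibre square \eqref{eq:tc4cof} (so that all four corners are determined a priori) and then verifying cocartesianness pointwise.
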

May's (TC4), in the strong form of his Definition 4.11, follows from this using the Mayer-Vietoris sequence  \cite[Theorem 6.1]{gps:stable}.
\begin{proof}
  Consider the following square in $\shift\D{\bbtwo\op}$
  \begin{equation}\label{eq:tc4input}
  \vcenter{\xymatrix{
      (y\ot x)\ar[r]\ar[d] &
      (y\ot y)\ar[d]\\
      (z\ot x)\ar[r] &
      (z\ot y).
      }}
  \end{equation}
  This is obtained by restriction from the canonical square
  \begin{equation}\label{eq:tc4cof}
  \vcenter{\xymatrix@-.5pc{
      x\ar[r]\ar[d] &
      y\ar[d]\\
      0\ar[r] &
      z.
      }}
  \end{equation}
  Moreover,~\eqref{eq:tc4input} is cocartesian, since its two underlying squares in \D are so (by~\cite[Corollary~2.6]{groth:ptstab}).
  The tensor product of~\eqref{eq:tc4input} with $(x'\to y') \in\D(\bbtwo)$ gives a cocartesian square in \D
  \begin{equation}\label{eq:tc4output}
  \vcenter{\xymatrix{
      v\ar[r]^-{j_2}\ar[d] &
      y\otimes y'\ar[d]^{k_2}\\
      ?\ar[r] &
      w.
      }}
  \end{equation}
  The objects $v$ and $y\otimes y'$ and the morphism $j_2$ in this diagram are identified using their definition in the proof of \autoref{thm:tc3} (TC3).
  Similarly, the lower-right corner is $\cof(f)\otimes_{[\bbtwo]} f'=w$.
  And since $k_2$ is by definition the morphism $p_3$ for $\cof(f)$ and $f'$, the definition of $p_3$ in the proof of \autoref{thm:tc3} identifies the morphism $k_2$ in~\eqref{eq:tc4output}.
  
  To identify the missing object and the two morphisms connected to it first note
   that the morphism $(z\ot x)$ at the lower-left corner of~\eqref{eq:tc4input} is the zero morphism.
  Moreover, we can obtain  a $\lrcorner$-diagram in $\D(\bbtwo\op)$
  \begin{equation}
  \vcenter{\xymatrix{
      & (0\ot x) \ar[d] \\
      (z\ot 0)\ar[r] &
      (z\ot x)
      }}
  \end{equation}by restriction from~\eqref{eq:tc4cof}. 
  Since its two underlying $\lrcorner$-diagrams in \D are coproduct diagrams, this diagram is a coproduct in $\D(\bbtwo\op)$.
  Thus, since $\otimes_{[\bbtwo]}$ is cocontinuous in each variable, the object in~\eqref{eq:tc4output} labeled ? is the coproduct
  \begin{equation}
    \Big((0\ot x) \otimes_{[\bbtwo]} (x'\to y')\Big) \oplus
    \Big((z\ot 0) \otimes_{[\bbtwo]} (x'\to y')\Big)
    \quad\cong\;
    (x\otimes z') \oplus (z\otimes x').
  \end{equation}
  We can now identify the two missing morphisms by composing them with the projections and coprojections of this biproduct, 
  which in turn must be obtained from those of the biproduct $(z\ot x) \cong (0\ot x) \oplus (z\ot 0)$.
  We leave it to the reader to verify that this yields precisely the definitions of $j_1$ and $j_3$ from the proof of \autoref{thm:tc3}, 
  and those of $k_1$ and $k_3$ coming from the construction of $w$.
\end{proof}

Of course, when \D is symmetric, we can also apply the symmetry to pushout products of rotated triangles.
Following~\cite{may:traces}, we denote the resulting object by $\wbar$ and its associated morphisms by $\kbar_i$ and $\qbar_i$; these satisfy analogous equations to those in \autoref{thm:involution}.

\section{Two-variable adjunctions and closed monoidal derivators}
\label{sec:tvas}

We have not yet introduced the notion of \emph{duality} which is essential to the definition of traces.
Dual pairs can be defined in any monoidal category, but they are easier to work with when the monoidal category is \emph{closed}.
Thus, in this section we to define what it means for a monoidal derivator to be closed, by using a notion of ``two-variable adjunction''.

In classical category theory, we say that a functor $\oast\colon\bC_1\times \bC_2\to\bC_3$ is a \emph{two-variable left adjoint} if each functor $(X\oast -)$ and $(-\oast Y)$ is a left adjoint.
This is equivalent to the existence of functors $\homrbare\colon \bC_2\op \times \bC_3 \to \bC_1$ and $\homlbare\colon\bC_3\times\bC_1\op \to \bC_2$ and natural isomorphisms
\[ \bC_3\big(X\oast Y,Z\big) \cong \bC_1\big(X, \homre{Y}{Z}\big) \cong \bC_2\big(Y, \homle{X}{Z}\big). \]
Our notational convention is chosen so that these isomorphisms preserve the cyclic ordering of $X,Y,Z$.
In this case we say that we have a \emph{two-variable adjunction}
\[ (\oast,\homrbare,\homlbare)\colon \bC_1 \times \bC_2 \tva \bC_3. \]

In particular, a monoidal category is closed (sometimes called \emph{biclosed} in the non-symmetric case) if and only if its tensor product is a two-variable left adjoint.
Thus, in order to define closed monoidal derivators, we must generalize two-variable adjunctions to derivators.
For the same reason as \autoref{warning:intccts-nope}, we must formulate this notion using the external rather than the internal products.

\begin{defn}\label{def:2va}
  A morphism $\oast \colon \D_1 \times \D_2 \to \D_3$ of derivators is a \textbf{two-variable left adjoint} if each external component
  \begin{equation}
    \oast\colon \D_1(A) \times\D_2(B) \to \D_3(A\times B)\label{eq:2vaextcpt}
  \end{equation}
  is a two-variable left adjoint, with adjoints
  \begin{align*}
    \homrbaree{B} &\colon \D_2(B)\op \times \D_3(A\times B) \xto{} \D_1(A)\\
    \homlbaree{A} &\colon \D_3(A\times B) \times \D_1(A)\op \xto{} \D_2(B)
  \end{align*}
  such that
  \begin{itemize}
  \item For any $Y\in\D_2(B)$ and $Z\in \D_3(A\times B)$ and functor $u\colon A'\to A$, the canonical mate-transformation
    \begin{equation}
      u^*\big(\homr{B}{Y}{Z}\big) \to \big(\homr{B}{Y}{(u\times 1)^* Z}\big)\label{eq:homrpsnat}
    \end{equation}
    is an isomorphism, and 
  \item For any $X\in\D_1(A)$ and $Z\in \D_3(A\times B)$ and functor $v\colon B'\to B$, the canonical mate-transformation
    \begin{equation}
      v^*\big(\homl{A}{X}{Z}\big) \to \big(\homl{A}{X}{(1\times v)^* Z}\big)\label{eq:homlpsnat}
    \end{equation}
    is an isomorphism.
  \end{itemize}
  In this case we say that $(\oast,\homrbare,\homlbare)$ is a \textbf{two-variable adjunction} $\D_1 \times \D_2 \tva \D_3$.
\end{defn}

\begin{defn}
  A monoidal derivator is \textbf{closed} if its tensor product $\otimes$ is a two-variable left adjoint.
\end{defn}

We have denoted these right adjoints with a subscript in brackets, analogously to the tensor product of functors, to indicate an indexing category that gets ``canceled'' (by an \emph{end} construction; see~\eqref{eq:repr-homr} and~\eqref{eq:homr-as-end}) and no longer indexes the output.
Thus for $X\in\D_1(A)$, $Y\in\D_2(B)$, and $Z\in\D_3(A\times B)$, we have adjunction isomorphisms
\begin{equation}
  \D_3(A\times B)\big(X\oast Y,Z\big) \;\cong\;
  \D_1(A)\big(X, \homr{B}{Y}{Z}\big) \;\cong\;
  \D_2(B)\big(Y,\homl{A}{X}{Z}\big).\label{eq:tva-adj}
\end{equation}

\begin{lem}
  If $\oast \colon \D_1 \times \D_2 \to \D_3$ is a morphism such that each component~\eqref{eq:2vaextcpt} is a two-variable left adjoint, 
  then the transformations~\eqref{eq:homrpsnat} and~\eqref{eq:homlpsnat} make the functors $\homrbaree{B}$ (for fixed $B$) and 
  $\homlbaree{A}$ (for fixed $A$) into lax natural transformations.
\end{lem}
\begin{proof}
  We consider $\homrbaree{B}$; the case of $\homlbaree{A}$ is dual.
  In this case the transformation~\eqref{eq:homrpsnat} can be regarded as living in the square
  \begin{equation}
    \vcenter{\xymatrix@C=3pc{
        \D_2(B)\op \times \D_3(A\times B) \ar[r]^{1\times (u\times 1)^*}\ar[d]_{\homrbaree{B}} \drtwocell\omit &
        \D_2(B)\op \times \D_3(A'\times B) \ar[d]^{\homrbaree{B}}\\
        \D_1(A)\ar[r]_{u^*} &
        \D_1(A')
      }}
  \end{equation}
  Strict naturality of $\homrbaree{B}$ would require that this square commute strictly; lax naturality means it is inhabited by a transformation (in this case,~\eqref{eq:homrpsnat}) satisfying functoriality axioms, e.g.\ that pasting the squares for $u\colon A\to A'$ and $u'\colon A'\to A''$ yields the square for $u'u\colon A\to A''$.
  It suffices to verify these axioms for a fixed $Y\in\D_2(B)$, in which case we can use the fact that the mate correspondence is functorial (i.e.\ the mate of a pasting is the pasting of the mates).
  This reduces the question to pseudofunctoriality of the adjoint $(-\oast Y)$, which holds by definition.
\end{proof}

Thus, a morphism $\oast$ as in the lemma is a two-variable left adjoint if and only if the lax transformations~\eqref{eq:homrpsnat} and~\eqref{eq:homlpsnat} are pseudonatural.

\begin{lem}\label{lem:2var}
  If each functor $\oast\colon \D_1(A) \times\D_2(B) \to \D_3(A\times B)$ is a two-variable left adjoint, then 
  $\oast\colon \D_1\times \D_2\to\D_3$ is a two-variable left adjoint if and only if it is cocontinuous in each variable.
\end{lem}
\begin{proof}
  In this situation, for fixed $Y\in\D_2(B)$ and $u\colon A\to A'$, the transformations~\eqref{eq:homrpsnat} and~\eqref{eq:coctsmate} are both mates of the pseudofunctoriality isomorphism $(u^* X) \oast Y \cong (u\times 1)^*(X\oast Y)$: one with respect to the adjunctions $(-\oast Y)\dashv (\homr{B}{Y}{-})$ (at $A$ and $A'$), and one with respect to the adjunctions $u_! \dashv u^*$ and $(u\times 1)_! \dashv (u\times 1)^*$.
  Therefore,~\eqref{eq:homrpsnat} and~\eqref{eq:coctsmate} are also mates of each other with respect to the composite adjunctions $(u_!(-)\oast Y)\dashv (\homr{B}{Y}{(u\times 1)^*(-)})$ and $(u\times 1)_!(-\oast Y)\dashv u^*(\homr{B}{Y}{-})$.
  
  In general, it need not be the case that the mate of an isomorphism is an isomorphism, but here we are in the special case where it is: when there are no functors involved other than the adjoints, the mate of an isomorphism $F\cong F'$ under adjunctions $F\dashv G$ and $F'\dashv G'$ is an isomorphism $G'\cong G$, expressing the uniqueness of adjoints.
  Thus,~\eqref{eq:homrpsnat} is an isomorphism if and only if~\eqref{eq:coctsmate} is.
\end{proof}

\begin{eg}\label{eg:repr-2va}
  If $\oast\colon\bC_1\times \bC_2\to\bC_3$ is an ordinary two-variable left adjoint between complete and cocomplete categories, 
  then the induced morphism $y(\bC_1) \times y(\bC_2) \to y(\bC_3)$ is a two-variable left adjoint of derivators.
  We remarked in \autoref{eg:repr-2vccts} that it is cocontinuous in each variable, so by \autoref{lem:2var} it suffices to exhibit adjoints to its external components.
  Using the usual notation for ends, we define the adjoint $\homrbaree{B}$ by
  \begin{equation}\label{eq:repr-homr}
    (\homr{B}{Y}{Z})_a \coloneqq \int_{b\in B} (\homre{Y_b}{Z_{(a,b)}})
  \end{equation}
  and similarly for $\homlbaree{A}$.
  In particular, if \bC is a complete and cocomplete closed monoidal category, then $y(\bC)$ is a closed monoidal derivator.
\end{eg}

\begin{eg}\label{thm:mmc-cmder}
  Suppose $\oast\colon\bC_1\times \bC_2\to\bC_3$ is a two-variable Quillen left adjoint between \emph{combinatorial} model categories.
  Then we can equip the diagram categories $\bC_1^A$, $\bC_2^B$, and $\bC_3^{A\times B}$ with the \emph{injective} model structures, in which the cofibrations and weak equivalences are levelwise.
  (Injective model structures on diagram categories generally only exist in the combinatorial case.)
  Since pushouts in functor categories are also levelwise, it follows that the induced functor
  \begin{equation}\label{eq:mmcten}
    \oast\colon\bC_1^A \times \bC_2^B \to \bC_3^{A\times B}
  \end{equation}
  is also a two-variable Quillen left adjoint.
  (A similar observation can be found in~\cite[Remark A.3.3.4]{lurie:higher-topoi}.)

  Therefore,~\eqref{eq:mmcten} induces a two-variable adjunction on homotopy categories.
  Together with \autoref{thm:mmc-mder}, this implies that the induced morphism of derivators $\ho(\bC_1) \times\ho(\bC_2) \to\ho(\bC_3)$ is a two-variable left adjoint.
  In particular, if \bC is a combinatorial monoidal model category, then $\ho(\bC)$ is a closed monoidal derivator.
  In fact, the hypothesis of combinatoriality can be weakened; see~\autoref{thm:mmc-cmder-noncomb}.
\end{eg}

The following lemma is also useful for constructing adjunctions of two variables.

\begin{lem}\label{lem:2varcom}
Let~$\oast\colon\D_1\times\D_2\to\D_3$ be a two-variable left adjoint and let $L_1\colon\E_1\to\D_1,$ $L_2\colon\E_2\to\D_2,$ and $L_3\colon\D_3\to\E_3$ be left adjoints. Then
\[L_3\circ\oast\circ(L_1\times L_2)\colon\E_1\times\E_2\to\E_3\]
is also a two-variable left adjoint.
\end{lem} 

\begin{proof}
The functoriality of mates implies that the given composite is cocontinuous in each variable.
 The result then follows easily from the corresponding result in ordinary category theory  
and \autoref{lem:2var}.
\end{proof}

Using Lemmas~\ref{lem:2var} and~\ref{lem:2varcom}, we now observe that if $\oast\colon\D_1\times\D_2\to\D_3$ is a two-variable left adjoint, 
then so are each of the ``liftings'' of its internal, external, and canceling components to morphisms of derivators.

\begin{eg}\label{thm:shifted-2va}
  If $\oast\colon\D_1\times\D_2\to\D_3$ is a two-variable left adjoint, then so is the induced morphism $\oast_B \colon \shift{\D_1}{B} \times \shift{\D_2}{B} \to \shift{\D_3}{B}$
  from~\eqref{eq:paramintern}.
  The external component of $\oast_B$ at $A$ and $C$ is the composite
  \[ \D_1(B\times A) \times \D_2(B\times C) \xto{\oast} \D_3(B\times A \times B\times C) \xto{(\symm\Delta_B)^*} \D_3(B\times A\times C). \]
  By the version of \autoref{lem:2varcom} for ordinary categories, this composite has a right adjoint in each variable.
  The functoriality of mates, together with cocontinuity of $\oast$ and $(\symm\Delta_B)^*$, implies that $\oast_B$ is also cocontinuous in each variable.
  Thus, by \autoref{lem:2var}, $\oast_B$ is a two-variable left adjoint.

  In particular, it follows that the ordinary functor $\oast_B\colon \D_1(B) \times \D_2(B) \to \D_3(B)$ is a two-variable left adjoint for all $B$.
  One might thus hope to be able to reformulate the notion of two-variable adjunction of derivators in terms of the internal monoidal structures only, 
  but the obvious version of the ``naturality'' condition for these adjunctions fails, for the same reason as \autoref{warning:intccts-nope}.

  It does follow, however, that if \D is a closed monoidal derivator, then so is $\shift{\D}{B}$.
\end{eg}

\begin{eg}\label{eg:paramextern-2va}
  If $\oast\colon \D_1 \times \D_2 \to \D_3$ is a two-variable left adjoint, then so is the induced morphism 
  $\shift{\D_1}{A} \times \shift{\D_2}{B} \to \shift{\D_3}{A\times B}$ from~\eqref{eq:paramextern}.
  This follows immediately from its definition using \autoref{thm:shifted-2va} and \autoref{lem:2varcom}.
\end{eg}

\begin{eg}\label{thm:tensor-adj}
  If $\oast\colon \D_1\times \D_2 \to \D_3$ is a two-variable left adjoint, then so is the induced morphism $\shift{\D_1}{B\op} \times\shift{\D_2}{B} \to \D_3$ from~\eqref{eq:tensor}.
  This follows immediately from its definition as the composite
  \[\shift{\D_1}{B\op} \times \shift{\D_2}{B} \xto{\oast}
  \shift{\D_3}{B\op\times B} \xto{(t\op,s\op)^*}
  \shift{\D_3}{\tw(B)\op} \xto{(\pi_{\tw(B)\op})_!}
  \D_3
  \]
  along with \autoref{eg:paramextern-2va} and \autoref{lem:2varcom}.
\end{eg}

\section{Cycling two-variable adjunctions}
\label{sec:cycling-tvas}

In an ordinary two-variable adjunction
\[ (\oast,\homrbare,\homlbare)\colon \bC_1\times \bC_2\tva\bC_3, \]
we can ``cycle'' the categories involved to obtain two other two-variable adjunctions
\begin{align}
  (\homlbareop,\oast\op,\homrbare)&\colon \bC_3\op\times \bC_1\tva\bC_2\op,\\
  (\homrbareop,\homlbare,\oast\op)&\colon \bC_2\times \bC_3\op\tva\bC_1\op.
\end{align}
This is a two-variable version of the fact that an adjunction $(F,G)\colon\bC_1 \rightleftarrows \bC_2$ can equally be regarded 
as an adjunction $(G\op,F\op)\colon\bC_2\op \rightleftarrows \bC_1\op$.
(The placement of the opposites can also be made more symmetrical; see~\cite{cgr:mvadj}.)

By contrast, in our definition of a two-variable adjunction for derivators, the morphisms $\homrbare$ and $\homlbare$ 
appear to play very different roles from $\oast$ (e.g.\ their components have different forms).
We now show that this asymmetry is only apparent: our two-variable adjunctions can be ``cycled'' just like the ordinary ones.

\begin{thm}\label{thm:cycle}
  From any two-variable adjunction of derivators
  \begin{equation}
    (\oast,\homrbare,\homlbare) \colon \D_1 \times \D_2 \tva \D_3\label{eq:2vatocycle}
  \end{equation}
  we can construct a ``cycled'' two-variable adjunction
  \begin{equation}
    (\homlbareop,\oast\op,\homrbare) \colon \D_3\op \times \D_1 \tva \D_2\op.\label{eq:cycled2va}
  \end{equation}
\end{thm}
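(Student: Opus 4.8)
The plan is to prove \autoref{thm:cycle} by reducing it to \autoref{lem:2var}, exactly as the existence of two-variable adjunctions of derivators was established for the original $\oast$. Concretely, I would first check that the underlying prederivator morphism makes sense: the assignment $\homlbareop$ on external components should be a morphism $\D_3\op \times \D_1 \to \D_2\op$, and likewise $\oast\op$ is a morphism $\D_2\op\times\D_3\op\to\D_1\op$ and $\homrbare$ (read in the new handedness) is $\D_1\op\times\D_2\op\to\D_3$ — wait, one must be careful with how opposites interact with the external product, since $(\D\op)(A\times B) = \D(A\op\times B\op)\op$. So the first real step is bookkeeping: unwind what the external components of the proposed morphisms in~\eqref{eq:cycled2va} are, and observe that for each fixed object in one variable they are, levelwise, precisely the ordinary cycled two-variable adjoints of the levelwise two-variable adjunction $(\oast,\homrbare,\homlbare)\colon\D_1(A)\times\D_2(B)\tva\D_3(A\times B)$. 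This uses \autoref{thm:shifted-2va} and \autoref{eg:paramextern-2va} implicitly, but the cleanest statement is just: each external component of the cycled data is a two-variable left adjoint of ordinary categories, by the classical cycling fact recalled just before the theorem.

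Granting that, by \autoref{lem:2var} it remains only to verify that $\homlbareop\colon\D_3\op\times\D_1\to\D_2\op$ is cocontinuous in each variable — equivalently, since we are in $\D_2\op$, that the original $\homlbare$ sends the relevant Kan extensions to Kan extensions in the appropriate variance. But this is exactly what is packaged in the pseudonaturality conditions~\eqref{eq:homrpsnat} and~\eqref{eq:homlpsnat} that $\oast$ was assumed to satisfy as a two-variable left adjoint, together with the fact that $\oast$ is cocontinuous in each variable (\autoref{lem:2var} again, applied to the original adjunction). The key mechanical input is the functoriality of mates: the mate of~\eqref{eq:coctsmate} (with respect to $u_!\dashv u^*$ on one side and the $\oast$-adjunctions on the other) is precisely~\eqref{eq:homrpsnat}, and by transitivity of mates one deduces the cocontinuity mate for $\homlbareop$ from the pseudonaturality of $\homrbare$ or $\homlbare$. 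Symmetrically, $\oast\op$ in~\eqref{eq:cycled2va} is cocontinuous in each variable because $\homrbare$ and $\homlbare$ are pseudonatural, and $\homrbare$ reappears as the two-variable left adjoint itself.

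So the skeleton of the argument is: (1) identify the external components of~\eqref{eq:cycled2va} and note each is a classical two-variable left adjoint, using the classical cycling correspondence; (2) invoke \autoref{lem:2var}, which reduces "two-variable left adjoint of derivators" to "each external component is a two-variable left adjoint of categories" plus "cocontinuous in each variable"; (3) verify cocontinuity in each variable of the new $\oast\op = \homlbareop$-role morphism by translating, via functoriality of mates, the pseudonaturality isomorphisms~\eqref{eq:homrpsnat} and~\eqref{eq:homlpsnat} of the original $\oast$ into the cocontinuity mates required of the cycled data. One should also record that applying the construction again (cycling $\D_3\op\times\D_1\tva\D_2\op$) recovers the second displayed cycled adjunction $(\homrbareop,\homlbare,\oast\op)\colon\D_2\times\D_3\op\tva\D_1\op$, and that cycling three times returns to the original up to the canonical identifications $\D\op{}\op\cong\D$.

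I expect the main obstacle to be purely organizational rather than conceptual: keeping straight the interaction of the three opposite-category operations with the external product indexing (the identity $(\D\op)(A) = \D(A\op)\op$ forces every $A$ to be replaced by $A\op$ in the cycled components, and one must confirm the pseudonaturality squares~\eqref{eq:homrpsnat}--\eqref{eq:homlpsnat} transport correctly under this relabelling). The cleanest way to manage this is to fix objects in two of the three slots throughout and work one variable at a time, so that at each stage one is only manipulating an ordinary adjunction and its mates; then the functoriality-of-mates lemma from~\cite{ks:r2cats} (see~\cite[Appendix A]{gps:stable}) does all the real work. Because every ingredient — the classical cycling of two-variable adjunctions, \autoref{lem:2var}, and the mate calculus — is already available, I would present this proof tersely, checking the first cycled adjunction in detail and remarking that the others follow by iteration and symmetry.
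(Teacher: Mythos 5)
Your overall strategy — write down the external components, show each is an ordinary two-variable left adjoint, then invoke \autoref{lem:2var} and mate calculus for the pseudonaturality conditions — matches the skeleton of the paper's proof. But your step (1) has a genuine gap. The external component of $\homlbareop$ at $(A,B)$ has the form
\[
\D_3(A\op)\op\times\D_1(B)\longrightarrow\D_2(A\op\times B\op)\op,
\]
with right adjoints landing in $\D_3(A\op)\op$ and $\D_1(B)$, respectively. This is \emph{not} the classical cycle of the external component $\oast\colon\D_1(A')\times\D_2(B')\to\D_3(A'\times B')$: cycling that yields a left adjoint $\D_3(A'\times B')\op\times\D_1(A')\to\D_2(B')\op$, which has the product indexing on the $\D_3$ slot and a single index on the $\D_2$ slot — exactly backwards from what you need. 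Nor is it the "levelwise" cycle at $\bbone$. There is no simple relabeling that bridges this mismatch. The paper's proof bridges it by defining the $\oast\op$-slot right adjoint to be (the opposite of) the \emph{canceling} tensor product $\oast_{[B\op]}\colon\D_1(B)\times\D_2(A\op\times B\op)\to\D_3(A\op)$, which involves a coend $(\pi_{\tw(B\op)\op})_!(t\op,s\op)^*(-)$, and then invokes \autoref{thm:tensor-adj} (not classical cycling) to conclude that this functor is a two-variable left adjoint and hence that the other two adjoints $\homlbareop$ and $\homrbaree{A\op}$ exist, with formulas such as $\homleop{X}{Z}=\homl{B\op}{X}{\big((t\op,s\op)_*(\pi_{\tw(B\op)\op})^*Z\big)}$ that visibly go beyond cycling or shifting. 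Your citations of \autoref{thm:shifted-2va} and \autoref{eg:paramextern-2va} are in the right neighborhood (these feed into \autoref{thm:tensor-adj}), but without the coend you cannot even state the external components, let alone check their pseudonaturality.

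Once \autoref{thm:tensor-adj} gives you the adjoint functors, the remainder is close to your step (3): one verifies the four pseudonaturality conditions of \autoref{def:2va} by taking mates of the isomorphisms supplied by \autoref{thm:tpadj} (applied to shifted versions of $\oast$), rather than by translating the original $\eqref{eq:homrpsnat}$--$\eqref{eq:homlpsnat}$ wholesale. Also a bookkeeping slip to fix: in the cycled adjunction $(\homlbareop,\oast\op,\homrbare)\colon\D_3\op\times\D_1\tva\D_2\op$, the types of the right adjoints are $\oast\op\colon\D_1\op\times\D_2\op\to\D_3\op$ and $\homrbare\colon\D_2\op\times\D_3\to\D_1$, not the ones you wrote; and $\homlbareop$, not $\homrbare$, is the left adjoint, so it is $\homlbareop$ whose cocontinuity you would verify and the other two whose pseudonaturality is required.
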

\begin{proof}
  Invoking the definition of opposite derivators, we see that the external component of~\eqref{eq:cycled2va} at $A$ and $B$ should have the form
  \begin{equation}
    \homlbareop \colon \D_3(A\op)\op \times \D_1(B) \too \D_2(A\op\times B\op)\op\label{eq:cycledext}
  \end{equation}
  and thus its right adjoints should have the forms
  \begin{align}
    \oast_{[B\op]}\op &\colon \D_1(B)\op \times \D_2(A\op\times B\op)\op \too \D_3(A\op)\op,\\
    \homrbaree{A\op} &\colon \D_2(A\op\times B\op)\op \times \D_3(A\op) \too \D_1(B).\label{eq:cycledradj2}
  \end{align}
  We have used our standard notation with canceling subscripts for these right adjoints, which immediately tells us how to define them.
  Namely, we let $\oast_{[B\op]}\op$ be the opposite of an instance of the canceling product for $\oast$ with $A\op$ as an external parameter, i.e.\ the opposite of the composite
  \begin{equation}
    \D_1(B) \times \D_2(A\op\times B\op) \xto{\oast}
    \D_3(B \times A\op\times B\op) \xto{\int^{B\op}}
    \D_3(A\op)\label{eq:cycledradj1}
  \end{equation}
  so that for $X\in\D_1(B)$ and $Y\in\D_2(A\op\times B\op)$ we have
  \begin{equation}
    X \oast_{[B\op]} Y = (\pi_{\tw(B\op)\op})_!(t\op,s\op)^*(X\oast Y).
  \end{equation}
  By \autoref{thm:tensor-adj}, the functor~\eqref{eq:cycledradj1} is a two-variable left adjoint.
  Thus, its right adjoints give us definitions for $\homlbareop$ and $\homrbaree{A\op}$.
  Explicitly, for $X\in \D_1(B)$, $Y\in\D_2(A\op\times B\op)$, and $Z\in \D_3(A\op)$ we have
  \begin{align}
    \homleop{X}{Z} &= \homl{B\op}{X}{\big((t\op,s\op)_* (\pi_{\tw(B\op)\op})^* Z\big)}
    \label{eq:cycle-homleop}\\
    \homr{A\op}{Y}{Z} &= \homr{A\op\times B\op}{Y}{\big((t\op,s\op)_* (\pi_{\tw(B\op)\op})^* Z\big)}.
    \label{eq:cycle-homr}
  \end{align}

  According to \autoref{def:2va}, it remains to show that $\homlbareop$ is pseudonatural in both variables, that $\oast_{[B\op]}\op$ 
  is pseudonatural in $A$, and that $\homrbaree{A\op}$ is pseudonatural in $B$.
  Pseudonaturality of $\homlbareop$ in $A$ is equivalent to pseudonaturality of $\homlbare$ in $A\op$, which follows from cocontinuity of $\oast_{[B\op]}$ in $A\op$, as in \autoref{lem:2var}.
  Pseudonaturality of $\homlbareop$ in $B$ means an isomorphism
  \begin{equation}\label{eq:psnathomolbareopB}
    (\homleop{u^* X}{Z}) \;\cong\; (1\times u\op)^* (\homleop{X}{Z})
  \end{equation}
  for $u\colon  B\to B'$, $X\in \D_1(B')$, and $Z\in \D_3(A\op)$.
  To obtain this, note that we have an isomorphism
  \begin{equation}\label{eq:psnathomolbareopB'}
    (u^*X) \oast_{[B\op]} Y \;\cong\; X\oast_{[(B')\op]} (1\times u\op)_! Y
  \end{equation}
  for any $Y\in \D_2(A\op\times B\op)$, by \autoref{thm:tpadj} applied to the shifted two-variable adjunction $\D_1 \times \shift{\D_2}{(A\op)} \to \shift{\D_3}{(A\op)}$.
  Since we have adjunctions
  \begin{align}
    ((u^*X) \oast_{[B\op]} -) &\;\dashv\; (\homleop{u^* X}{-}) \qquad\text{and}\\
    (X\oast_{[(B')\op]} (1\times u\op)_! -) &\;\dashv\; (1\times u\op)^* (\homleop{X}{-}),
  \end{align}
  the mate of~\eqref{eq:psnathomolbareopB'} is the desired isomorphism~\eqref{eq:psnathomolbareopB}.

  Pseudonaturality of $\oast_{[B\op]}\op$ in $A$, of course, is equivalent to pseudonaturality of $\oast_{[B\op]}$ in $A\op$.
  Finally, pseudonaturality of $\homrbaree{A\op}$ in $B$ means an isomorphism
  \begin{equation}\label{eq:cycleriso}
     u^* (\homr{A\op}{Y}{Z}) \;\cong\; \homr{A\op}{((1\times u\op)^*Y)}{Z}
  \end{equation}
  for $u\colon B\to B'$, $Y\in \D_2(A\op\times (B')\op)$, and $Z\in \D_3(A\op)$.
  As with~\eqref{eq:psnathomolbareopB}, we take this to be a mate of the isomorphism
  \begin{equation}
    (u_! X) \oast_{[(B')\op]} Y \;\cong\; X \oast_{[B\op]} (1\times u\op)^* Y
  \end{equation}
  which exists for $X\in \D_1(B)$, again by \autoref{thm:tpadj}.
  We leave it to the reader to verify that the induced isomorphism~\eqref{eq:cycleriso} is, in fact, the canonical transformation~\eqref{eq:homrpsnat} which we require to be an isomorphism.
\end{proof}

Analogously to~\eqref{eq:tva-adj}, the cycled two-variable adjunction has isomorphisms
\begin{equation}
  \small
  \D_2(A\op\times B\op)\op\big(\homleop{X}{Z},Y\big) \;\cong\;
  \D_1(A)\big(X, \homr{B}{Y}{Z}\big) \;\cong\;
  \D_3(B\op)\op\big(Z,X \oast_{[A\op]} Y\big).
\end{equation}
If we iterate \autoref{thm:cycle}, we obtain its dual version, which gives a two-variable adjunction
\begin{equation}\label{eq:cycle2}
  (\homrbareop,\homlbare,\oast\op) \colon \D_2 \times \D_3\op \tva \D_1\op.
\end{equation}
From the proof of \autoref{thm:cycle}, we see that the functor $\oast\op$ occurring in~\eqref{eq:cycle2}, which has components
\begin{align}
  \oast_{[A\op]}\op &\colon \D_1(A\op\times B\op)\op \times \D_2(A)\op \too \D_3(B\op)\op,
\end{align}
is defined in terms of the original functor $\oast$ by
\begin{align}
  X\oast_{[A\op]}\op Y &= X \oast_{[A\op\times B\op]} (t,s)_! (\pi_{\tw(B\op)})^* Y\\
  &\cong X\oast_{[A\op]} Y \mathrlap{\hspace{2cm}\text{(by \autoref{thm:bicatunit}).}}
\end{align}
By uniqueness of adjunctions, it follows that the functors $\homrbareop$ and $\homlbare$ in~\eqref{eq:cycle2} must be the right adjoints of this, as constructed in \autoref{thm:tensor-adj}.
Namely, their components
\begin{align}
  \homrbareop &\colon \D_2(A) \times \D_2(B\op)\op \too \D_1(A\op\times B\op)\op\\
  \homlbaree{B} &\colon \D_2(B\op) \times \D_1(A\op\times B\op)\op \too \D_2(A)
\end{align}
must be isomorphic to
\begin{align}
  \homreop{Y}{Z} &= \homr{A}{Y}{\big((t\op,s\op)_* (\pi_{\tw(A\op)\op})^* Z\big)}
  \label{eq:cycle2-homreop}\\
  \homl{B\op}{X}{Z} &= \homl{A\op\times B\op}{X}{\big((t\op,s\op)_* (\pi_{\tw(A\op)\op})^* Z \big)}.
  \label{eq:cycle2-homl}
\end{align}
Note that these definitions are exactly symmetrical to those obtained in \autoref{thm:cycle}.
Finally, applying \autoref{thm:cycle} one more time, we obtain a two-variable adjunction of the original form
\begin{equation}\label{eq:cycle3}
  (\oast,\homrbare,\homlbare) \colon \D_1 \times \D_2 \tva \D_3
\end{equation}
whose functor $\oast$ is defined, for $X\in\D_1(A)$ and $Y\in\D_2(B)$, by
\begin{align}
  X\oast Y &= (t,s)_!(\pi_{\tw(B)})^* X \oast_{[B]} Y.
\end{align}
Applying \autoref{thm:bicatunit} again, we see that this is isomorphic to the original functor $\oast$, and hence its adjoints must  be isomorphic to the original adjoints.
Thus, up to isomorphism, \autoref{thm:cycle} describes a cyclic action on two-variable adjunctions.
Abstractly, we could say that derivators and two-variable adjunctions form a ``pseudo cyclic double multicategory''~\cite{cgr:mvadj}.

Also, by the construction in \autoref{thm:cycle}, the functor $\homrbare$ appearing in~\eqref{eq:cycle3} should be given by a canceling version of~\eqref{eq:cycle2-homreop}, which is to say an \emph{end} in $\D_1$:
\begin{equation}\label{eq:homr-as-end}
  \homr{B}{Y}{Z} \;=\; \int_B (\homre{Y}{Z})
  \;=\; (\pi_{\tw(B)\op})_* (t\op,s\op)^* (\homre{Y}{Z}).
\end{equation}
Since this is isomorphic to the original adjoint $\homrbaree{B}$ of $\oast$, we obtain a version of the formula~\eqref{eq:repr-homr} that holds in any derivator.

One application of \autoref{thm:cycle} is to deduce the rest of May's axiom (TC2).

\begin{thm}[the rest of (TC2)]\label{thm:tc2b}
  For any distinguished triangle \[\DT x f y g z h {\Sigma x}\]
  in $\D(\bbone)$ and any $w\in\D(\bbone)$, the following triangles are distinguished
  \begin{gather*}
    \DT{\homre w x}{\homre 1 f}{\homre w y}{\homre 1 g}{\homre w z}{\homre 1 h}{\Sigma(\homre w x),}\\
    \DT{\Sigma^{-1} (\homre x w)}{-\homre h 1}{\homre z w}{\homre g 1}{\homre y w}{\homre f 1}{\homre x w}
  \end{gather*}
  along with the analogous triangles involving $\homlbare$ (which, in the symmetric case, are isomorphic to the two above).
\end{thm}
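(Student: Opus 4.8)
The plan is to deduce both families of triangles from the fact that a \emph{continuous} morphism between stable derivators preserves distinguished triangles, using \autoref{thm:cycle} to recognise the internal--hom functors as such morphisms. (The relevant fact is dual to the statement for cocontinuous morphisms: a continuous morphism of derivators preserves all limits, hence all cartesian squares and the zero object; when source and target are both stable, cartesian squares coincide with cocartesian ones, so the morphism preserves cofiber sequences, the adjunction $\cof\dashv\fib$, and the suspension, and therefore distinguished triangles. Recall from \autoref{eg:opposite} that $\D\op$ is stable whenever $\D$ is.) Equivalently one could mimic the proof of \autoref{thm:tc2} verbatim, applied not to $\otimes$ but to the cycled two-variable left adjoints produced below; this works because that proof used only that $\otimes$ is a two-variable morphism of derivators which is cocontinuous in each variable, a property shared by every two-variable left adjoint (\autoref{lem:2var}).

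For the first triangle, no cycling is needed. Taking $B=\bbone$ in the adjunction~\eqref{eq:tva-adj}, and recalling that the external product of an $A$-diagram with an object of $\D(\bbone)$ is again an $A$-diagram, we see that for fixed $w\in\D(\bbone)$ the functor $\homre{w}{-}$ underlies a morphism of derivators $\D\to\D$ which is right adjoint to $(-\otimes w)\colon\D\to\D$; the coherence isomorphism~\eqref{eq:homrpsnat} at $B=\bbone$ is exactly the naturality needed for this to be an adjunction of derivators. Since $\otimes$ is cocontinuous in the first variable (take $C=\bbone$ in~\eqref{eq:coctsmate}), the morphism $(-\otimes w)$ is cocontinuous, so its right adjoint $\homre{w}{-}$ is continuous. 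By the remarks above it therefore preserves distinguished triangles and satisfies $\homre{w}{\Sigma x}\cong\Sigma\,\homre{w}{x}$, and applying it to the cofiber sequence underlying the given triangle yields the first triangle of the statement.

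For the second triangle we invoke \autoref{thm:cycle}, which applied to $\D$ (with $\D_1=\D_2=\D_3=\D$) produces a two-variable adjunction $(\homlbareop,\otimes\op,\homrbare)\colon\D\op\times\D\tva\D\op$. Fixing the $\D(\bbone)$-valued variable at $w$, this exhibits $\homre{-}{w}$ as a morphism of derivators $\D\op\to\D$ which is a right adjoint of a one-variable morphism obtained from the two-variable left adjoint $\homlbareop$ by fixing a variable; the latter morphism is cocontinuous (cf.\ \autoref{thm:tensor-adj}), so $\homre{-}{w}\colon\D\op\to\D$ is continuous. Now the cofiber sequence underlying the given distinguished triangle $x\xto{f}y\xto{g}z\xto{h}\Sigma x$, regarded in $\D\op$, is again a cofiber sequence --- precisely because in a stable derivator cocartesian and cartesian squares coincide --- and it represents in $\D\op(\bbone)=\D(\bbone)\op$ the backward rotation $\Sigma^{-1}z\xto{-\Sigma^{-1}h}x\xto{f}y\xto{g}z$ of the original triangle (with the sign governed by \autoref{rmk:sign}). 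Since $\homre{-}{w}$ is continuous from $\D\op$ to $\D$, it carries distinguished triangles of $\D\op$ to distinguished triangles of $\D$, and because the loop functor of $\D\op$ is the suspension $\Sigma$ of $\D$ it turns $\Sigma$ into $\Sigma^{-1}$, giving $\homre{\Sigma x}{w}\cong\Sigma^{-1}\homre{x}{w}$; applying it to the triangle just described yields exactly the second triangle of the statement.

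Finally, the analogous triangles involving $\homlbare$ are obtained in the same way, using that $(w\otimes-)$ is cocontinuous for the first and applying \autoref{thm:cycle} in the other direction (or~\eqref{eq:cycle2}) for the second; when $\D$ is symmetric, the symmetry induces a coherent identification $\homre{Y}{Z}\cong\homle{Y}{Z}$, so these coincide with the first two triangles. The one genuinely delicate point in all of this is the sign bookkeeping for the second triangle: checking, through the passage to $\D\op$ and the backward rotation of the input, that the connecting morphism is precisely $-\homre{h}{1}$ and the first object precisely $\Sigma^{-1}(\homre{x}{w})$, compatibly with \autoref{rmk:sign}. Everything else is formal once \autoref{thm:cycle} is in hand.
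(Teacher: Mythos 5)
Your overall strategy matches the paper's: use \autoref{thm:cycle} (and the observation that left adjoints of derivators are cocontinuous, hence preserve cofiber sequences) to recognize the internal-hom functors as one-variable morphisms that preserve the relevant structure, with stability converting cartesian to cocartesian as needed.

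For the first triangle you are in fact slightly cleaner than the paper. You observe directly that $\homre{w}{-}\colon\D\to\D$ is a right adjoint morphism of derivators (take $B=\bbone$ in \autoref{def:2va}; the left adjoint is $(-\otimes w)$), hence continuous, and by stability preservation of cartesian squares gives preservation of cocartesian squares and of the cofiber sequence. No cycling is needed. The paper instead invokes \autoref{thm:cycle} to see that $\homrbare\op\colon\D\times\D\op\to\D\op$ is a two-variable left adjoint and unwinds through $\D\op$, landing in the same place. (A small slip: in your framing of the cycled adjunction $(\homlbareop,\otimes\op,\homrbare)\colon\D\op\times\D\tva\D\op$, fixing the $\D$-valued variable at $w$ produces $\homleop{-}{w}$ as left adjoint with $(w\otimes\op-)$ as its right adjoint; to exhibit $\homre{-}{w}$ as a right adjoint you must fix the $\D\op$-valued variable. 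This is only a wording issue, since objects of $\D\op(\bbone)$ and $\D(\bbone)$ coincide.)

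For the second triangle there is a genuine gap, and it is the step you yourself flag as delicate. The paper applies the cocontinuous morphism $\homre{-}{w}\colon\D\to\D\op$ to the cofiber sequence, obtaining a fiber sequence in $\D$, and then cites \cite[Lemma 5.22]{gps:stable}, which says that fiber sequences induce the \emph{negative} of the triangulation on $\D$ --- that is precisely where the $-\homre{h}{1}$ and the $\Sigma^{-1}$ come from. Your account instead passes the cofiber sequence through $\D\op$ and asserts that it ``represents the backward rotation $\Sigma^{-1}z\xrightarrow{-\Sigma^{-1}h}x\to y\to z$.'' As stated this does not quite parse (the arrows in $\D\op(\bbone)$ are reversed relative to $\D(\bbone)$, and the isomorphism $\boxbar\op\cong\boxbar$ involves a reorientation whose effect on signs is exactly what must be tracked), and the minus sign is asserted rather than derived. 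The content being assumed is precisely the comparison between the triangulation induced by cofiber sequences and the one induced by fiber sequences, which is the cited Lemma 5.22. To complete the argument you should either cite that lemma as the paper does, or reprove its content by tracking the automorphism $\sigma$ of $\Box$ from \autoref{rmk:sign} through the re-orientations involved.
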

\begin{proof}
  By assumption, we have a cofiber sequence
  \[\vcenter{\xymatrix@-.5pc{
      x\ar[r]\ar[d] &
      y\ar[r]\ar[d] &
      0\ar[d]\\
      0\ar[r] &
      z\ar[r] &
      \Sigma x.
    }}\]
  Now by \autoref{thm:cycle},
  \[ \homrbare\op \colon \D \times \D\op \to \D\op \]
  is itself a two-variable left adjoint, and hence cocontinuous in each variable.
  In particular, the morphism $(\homre w -)\colon \D\op\to\D\op$ preserves cofiber sequences, which (since $\D\op$ is stable) is equivalent to preserving fiber sequences.
  But fiber sequences in $\D\op$ are the same as cofiber sequences in $\D$, so preservation of these yields the first triangle above.

  The functor $(\homre - w)\colon \D\to \D\op$ also preserves cofiber sequences, which is to say that it takes cofiber sequences in \D to \emph{fiber} sequences in \D.
  As was observed in \cite[Lemma 5.22]{gps:stable} the fiber sequences induce the negative of the triangulation on \D, we get the second triangle above together with the minus sign as indicated.
\end{proof}

Another application of \autoref{thm:cycle} is to extend \autoref{thm:mmc-cmder} to the non-combinatorial case.

\begin{thm}\label{thm:mmc-cmder-noncomb}
  If \bC is a cofibrantly generated monoidal model category, then $\ho(\bC)$ is a closed monoidal derivator.
\end{thm}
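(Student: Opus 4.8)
The plan is to reduce the statement, via \autoref{lem:2var}, to the assertion that each external component $\otimes\colon\ho(\bC)(A)\times\ho(\bC)(B)\to\ho(\bC)(A\times B)$ is an ordinary two-variable left adjoint, and then to produce the needed adjoints by a Quillen-bifunctor argument with the \emph{projective} model structures (the injective ones used in \autoref{thm:mmc-cmder} being unavailable here). First recall that $\ho(\bC)$ is already a monoidal derivator by \autoref{thm:mmc-mder}, so its tensor is cocontinuous in each variable; by \autoref{lem:2var} it therefore suffices to exhibit, for all small $A$ and $B$, ordinary right adjoints in each variable to the external product $\otimes$ on homotopy categories, and \autoref{thm:cycle} together with the discussion following it then packages these into an honest two-variable adjunction of derivators, with the $\homrbare$'s given by the end formula and all the pseudonaturality conditions of \autoref{def:2va} automatic.

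To get those adjoints, equip $\bC^A$, $\bC^B$, $\bC^{A\times B}$ with their projective model structures (these exist because \bC is cofibrantly generated, and they underlie the prederivator $\ho(\bC)$). I claim the pointwise external tensor $\oast\colon\bC^A\times\bC^B\to\bC^{A\times B}$, $(X\oast Y)_{(a,b)}=X_a\otimes Y_b$, is a two-variable Quillen left adjoint. It is cocontinuous in each variable (it is built from the restrictions $\pi_A^*,\pi_B^*$, which preserve colimits, and the cocontinuous tensor of \bC) and has right adjoints in each variable, namely $W\mapsto(\pi_B)_*\,\Hom(\pi_A^*Y,W)$ and its mirror image. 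For the pushout-product axiom it suffices, by the usual reduction to generators, to test it on the generating projective (trivial) cofibrations $F_a(i)$, $F_b(j)$, where $F_a$ is left adjoint to evaluation at $a$ and $i,j$ range over the generating (trivial) cofibrations of \bC. Since $\otimes$ distributes over coproducts, $F_a(P)\oast F_b(R)\cong F_{(a,b)}(P\otimes R)$, and hence the external pushout-product of $F_a(i)$ and $F_b(j)$ is isomorphic to $F_{(a,b)}$ applied to the pushout-product of $i$ and $j$ in \bC. The latter is a cofibration in \bC, trivial if $i$ or $j$ is, by the pushout-product axiom for \bC, and $F_{(a,b)}\colon\bC\to\bC^{A\times B}$ is left Quillen, so its image is a projective cofibration with the same property. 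Thus $\oast$ is a two-variable Quillen left adjoint, hence descends to a two-variable adjunction on homotopy categories.

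It remains to see that the left-derived functor of $\oast$ is the external tensor of the monoidal derivator $\ho(\bC)$: both are the total left derived functor of $\oast$, because for a projectively cofibrant $X\in\bC^A$ the diagram $\pi_B^*X$ is still pointwise cofibrant in $\bC^{A\times B}$ and hence computes the derived pointwise tensor used in \autoref{thm:mmc-mpder}. With this identification, the external components of $\otimes$ on $\ho(\bC)$ are ordinary two-variable left adjoints; combined with the cocontinuity already recorded, \autoref{lem:2var} (whose proof exhibits the pseudonaturality of the adjoints as the mate of the cocontinuity isomorphisms, with \autoref{thm:cycle} supplying the remaining adjoint and its naturality by cycling) upgrades this to a two-variable adjunction of derivators. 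Therefore $\ho(\bC)$ is a closed monoidal derivator. I expect the main obstacle to be exactly this last step of bookkeeping — confirming that the derived functor of $\oast$ really is the tensor of the monoidal derivator and that the adjoints one extracts satisfy the pseudonaturality conditions of \autoref{def:2va} — since in the absence of injective model structures one must lean on the projective Quillen-bifunctor computation above together with the formal cycling machinery of \S\ref{sec:cycling-tvas} rather than the transparent levelwise argument of \autoref{thm:mmc-cmder}.
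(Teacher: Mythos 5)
Your argument is correct but follows a genuinely different route from the paper's. The paper never verifies directly that $\oast\colon\bC^A\times\bC^B\to\bC^{A\times B}$ is a Quillen bifunctor for the projective model structures; instead it shows that $\homrbareop\colon\bC^A\times(\bC^{B\op})\op\to(\bC^{A\op\times B\op})\op$ is a two-variable Quillen left adjoint — using the observation that the dual of the projective model structure on $\bC^{B\op}$ is an injective model structure on $(\bC\op)^B$, together with the fact that projective cofibrations are in particular levelwise — and then invokes the cycling machinery of \S\ref{sec:cycling-tvas} and \autoref{thm:bicatunit} to transport the resulting two-variable adjunction of derivators back to one for $\otimes$. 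You instead attack $\oast$ head-on, reducing the pushout-product axiom to the generating (trivial) cofibrations $F_a(i)$, $F_b(j)$ and using the identity $F_a(P)\oast F_b(R)\cong F_{(a,b)}(P\otimes R)$ (which holds because $\otimes$ preserves coproducts in each variable and $F_{(a,b)}$ preserves pushouts) to land on $F_{(a,b)}(i\square j)$, a projective (trivial) cofibration since $F_{(a,b)}$ is left Quillen. This is shorter and more self-contained: once the Quillen-bifunctor claim is in hand, \autoref{lem:2var} together with cocontinuity from \autoref{thm:mmc-mder} finishes the proof, and the cycling you tentatively invoke at the end is in fact unnecessary. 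The paper's indirect route has the advantage of reusing the proof shape of the combinatorial case \autoref{thm:mmc-cmder} verbatim and of illustrating \autoref{thm:cycle} in action. One small correction: the formula $(\pi_B)_*\Hom(\pi_A^*Y,W)$ for the right adjoint is not well-formed, since $(a,b)\mapsto\Hom_\bC(Y_b,W_{(a,b)})$ has mixed variance in $b$ and so does not define an object of $\bC^{A\times B}$; the correct expression is the end of \eqref{eq:repr-homr}. This does not affect your argument, since only the existence of right adjoints in each variable is needed for the Quillen-bifunctor criterion, and that is supplied by bicompleteness of \bC.
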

\begin{proof}
  If \bC is not combinatorial, the injective model structure on $\bC^A$ may not exist, but the \emph{projective} model structure always does, in which the \emph{fibrations} and weak equivalences are objectwise.
  Moreover, under the isomorphism
  \[\left(\bC^{B\op}\right)\op \cong (\bC\op)^B \]
  the dual of the projective model structure on the left-hand side becomes identified with an injective model structure on the right-hand side.
  Since projective cofibrations are in particular objectwise cofibrations, it follows that the induced two-variable adjunction
  \[ \homrbareop \colon \bC^A \times \left(\bC^{B\op}\right)\op \to \left(\bC^{A\op\times B\op}\right)\op
  \]
  is a two-variable Quillen left adjoint.
  As in the combinatorial case, therefore, we obtain a two-variable adjunction of derivators
  \begin{equation}
    (\homrbareop,\homlbare,\otimes\op) \colon \ho(\bC) \times \ho(\bC)\op \too \ho(\bC)\op
  \end{equation}
  which we can therefore cycle forwards into a two-variable adjunction
  \begin{equation}
    (\otimes',\homrbare,\homlbare)\colon \ho(\bC) \times \ho(\bC) \too \ho(\bC).
  \end{equation}
  Thus, to make $\ho(\bC)$ into a closed monoidal derivator, 
  it suffices to identify this functor $\otimes'$ with the monoidal structure $\otimes$ on $\ho(\bC)$ constructed in \autoref{thm:mmc-mpder}.
  By definition, $\otimes'$ is given by~\eqref{eq:cycle-homleop} with $\otimes'_{[B]}$ replacing $\homlbaree{B\op}$:
  \begin{equation}\label{eq:mmc-cycled}
    Z \otimes' X = \big((t,s)_! (\pi_{\tw(B\op)})^* Z\big) \otimes'_{[B]} X.
  \end{equation}
  Here $\tw(B\op)\op$ and $(t\op,s\op)_*$ have become $\tw(B\op)$ and  $(t,s)_!$, respectively, because of passage to the opposite derivator.
  Moreover, $\otimes'_{[B]}$ denotes, not the canceling tensor product $\otimes_{[B]}$ constructed in \S\ref{sec:ends-coends}, 
  but the left derived functor of the point-set-level canceling tensor product
  \[ \bC^{A\times B\times B\op} \times \bC^B \too \bC^{A\times B} \]
  obtained by applying it to projectively cofibrant diagrams.
  However, because this point-set-level functor is the composite
  \[ \bC^{A\times B\times B\op} \times \bC^B \xto{\otimes} \bC^{A\times B\times B\op \times B}
  \xto{(t\op,s\op)^*} \bC^{A\times B\times \tw(B\op)}
  \xto{(\pi_{\tw(B\op)})_!} \bC^{A\times B},
  \]
  its left derived functor is naturally isomorphic to the composite of the left derived functors of each of these ingredients.
  But these left derived functors are precisely the corresponding functors in $\ho(\bC)$ which went into the construction of $\otimes_{[B]}$, so we have $\otimes_{[B]}' \cong \otimes_{[B]}$.
  Finally, applying \autoref{thm:bicatunit}, we obtain $\otimes'\cong \otimes$.
\end{proof}
\noindent
The proof actually shows that a two-variable Quillen adjunction between cofibrantly generated model categories induces a two-variable adjunction of derivators.

\section{Duality in closed monoidal derivators}
\label{sec:duality}

With closed monoidal derivators in hand, we move on to study duality.
Following~\cite{may:traces}, for an object $x\in\D(\bbone)$, we refer to the internal-hom $\homre{x}{\lS}$ as the \textbf{canonical dual} of $x$ and write it as $\dual x$.
There is a canonical \textbf{evaluation} map $\epsilon\colon \dual x \otimes x \to \lS$, defined by adjunction from the identity of $\dual x$.

We say that $x$ is \textbf{dualizable} if the canonical map
\begin{equation}\label{eq:dualmap}
  \mu_{x,u}\colon u\otimes \dual x \too \homre x u
\end{equation}
(whose adjunct\footnote{In general, if $F\colon \bC\to\bD$ is left adjoint to $G\colon\bD\to\bC$ and $\phi\colon F x \to y$ is a morphism in \bD, then by the \emph{adjunct} of $\phi$ we mean its image $\bar{\phi}\colon x\to G y$ under the adjunction isomorphism $\bD(F x ,y) \cong \bC(x,Gy)$.} is $u\otimes \dual x \otimes x \xto{1_u \otimes \epsilon} u\otimes \lS \cong u$)
is an isomorphism for all objects $u$.
It is sufficient to require this for $u=x$. 

More generally, the two-variable morphism $\homrbare \colon  \D\op\times \D \to\D$ induces a one-variable morphism
\[ \dual{(-)} \coloneqq (\homre{-}{\lS}) \colon  \D\op \too \D.\]
If $X\in\D(A\op)$, we will refer to $\dual{X}\in\D(A)$ as its \textbf{pointwise (canonical) dual}, since by naturality we have $(\dual X)_a \cong \dual{(X_a)}$.
We have a generalized evaluation map $\epsilon\colon \dual X \otimes_{[A]} X \to \lS$ defined analogously.
The functoriality of pointwise duals allows us to prove the following.

\begin{lem}\label{thm:dtduals-3for2}
  In a stable monoidal derivator, given a distinguished triangle
  \[ \DT{x}{f}{y}{g}{z}{h}{\Sigma x} \]
  if any two of $x$, $y$, and $z$ are dualizable, then so is the third.
\end{lem}
\begin{proof}
  Suppose given a cofiber sequence $X\in\D(\boxbar)$
  \begin{equation}
    \vcenter{\xymatrix{
        x\ar[r]^f\ar[d] &
        y\ar[r]\ar[d]^g &
        0\ar[d]\\
        0\ar[r] &
        z\ar[r]_-h &
        \Sigma x.
      }}
  \end{equation}
  Then we have the evaluation map of the pointwise dual, $\epsilon\colon \dual X \otimes_{[\boxbar]} X \to \lS$, which we can tensor with $u\in\D(\bbone)$ to obtain a morphism
  \[ (u\otimes \dual X) \otimes_{[\boxbar]} X \toiso
  u\otimes (\dual X \otimes_{[\boxbar]} X) \xto{\epsilon}
  u \otimes \lS \toiso u
  \]
  in $\D(\bbone)$.
  The adjunct of this is a morphism
  \[ \mu_{X,u}\colon u\otimes \dual X \too \homre{X}{u} \]
  in $\D(\boxbar)$.
  By naturality, we have $(\mu_{X,u})_{(1,2)} = \mu_{x,u}$ and so on; thus the underlying morphism of $\mu_{X,u}$ in $\D(\bbone)^\boxbar$ gives a morphism of distinguished triangles
  \begin{equation}
    \vcenter{\xymatrix{
        \Sigma^{-1}(u\otimes \dual x)
        \ar[r]^-{-\dual h} \ar[d]_{\Sigma^{-1}\mu_{x,u}} &
        u\otimes \dual z\ar[r]^-{\dual g}\ar[d]^{\mu_{z,u}} &
        u\otimes \dual y\ar[r]^-{\dual f}\ar[d]^{\mu_{y,u}} &
        u\otimes \dual x\ar[d]^{\mu_{x,u}}\\
        \Sigma^{-1}(\homre x u) \ar[r]_-{-\homre h 1} &
        \homre z u\ar[r]_-{\homre g 1} &
        \homre y u\ar[r]_-{\homre f 1} &
        \homre x u.
      }}
  \end{equation}
  However, both the domain and codomain of $\mu_{u,X}$ are cofiber sequences, and a morphism of bicartesian squares is an isomorphism 
  as soon as it restricts to an isomorphism on $\ulcorner$ or $\lrcorner$, since $(i_\ulcorner)_!$ and $(i_\lrcorner)_*$ are fully faithful.
  The result follows.
\end{proof}

The last step in this proof can be regarded as a stable-derivator version of the ``five-lemma'' for triangulated categories, which says that a morphism of distinguished triangles which is an isomorphism at two places is also an isomorphism at the third.
The derivator version refers instead to morphisms of (coherent) cofiber sequences, and is true whether or not the derivator is strong.

\begin{cor}
  The pushout of any span of dualizable objects in a stable monoidal derivator is dualizable.
\end{cor}
\begin{proof}
  By \autoref{thm:dtduals-3for2} together with the Mayer-Vietoris sequence \cite[Theorem 6.1]{gps:stable}, 
and the fact that finite coproducts of dualizable objects in an additive monoidal category are dualizable.
\end{proof}

We also have a compatibility between duality and pushout products.
To express this, note first that since $\homrbare\op$ is cocontinuous in both variables, $\dual{(-)}$ takes colimits in \D to limits in \D.
In particular, it takes cofiber sequences to fiber sequences, and when $\D$ is stable, it preserves bicartesianness of squares.
Expressed at the level of $\D(\bbone)$, this says that the functor $\dual{(-)}\colon \D(\bbone)\op \to \D(\bbone)$ preserves distinguished triangles, 
in the sense that if
\[\DT x f y g z h {\Sigma x}, \]
is distinguished then so is
\[\DT{\dual z}{\dual g}{\dual y}{\dual f}{\dual x}{\dual \Sigma^{-1} h}{\dual \Sigma^{-1} z \cong \Sigma(\dual z)}. \]
(This is a special case of \autoref{thm:tc2b} (TC2).)
By \autoref{thm:tc3-rotate}, therefore, we have
\[ \dual f \otimes_{[\bbtwo]} f
\;\cong\;
\cof(\dual g) \otimes_{[\bbtwo]} f
\;\cong\;
\dual g \otimes_{[\bbtwo]} \cof(f)
\;\cong\;
\dual g \otimes_{[\bbtwo]} g.
\]
Similarly, this same object is isomorphic to $\dual h  \otimes_{[\bbtwo]} h$.
Following May, we denote it by $\wbar$; it is the object occurring in (TC3\/$'$) for the triangles $(\dual{g}, \dual{f}, \dual{\Sigma^{-1}h})$ and $(f, g, h)$.
The following theorem, which is the first half of May's axiom (TC5), says that the evaluation morphisms $\epsilon$ jointly induce a unique morphism $\overline\epsilon\colon \dual f \otimes_{[\bbtwo]} f \to\lS$.

\begin{thm}[TC5a]\label{thm:tc5a}
  With $\wbar\coloneqq \dual g \otimes_{[\bbtwo]} g$ as above, there is a map $\overline{\epsilon}\colon \wbar \rightarrow \lS$ such that the following (incoherent) diagrams commute:  \begin{equation}\label{eq:tc5a}
    \xymatrix@C=3pc{\dual{x}\otimes x
      \ar[r]^-{\overline{k}_3}\ar[dr]_{\epsilon} &
      \wbar\ar[d]^{\overline{\epsilon}}\\
      &\lS}\hspace{10pt}
    \xymatrix@C=3pc{ \dual{y}\otimes y\ar[r]^-{\overline{k}_2}\ar[rd]_{\epsilon}&
      \wbar\ar[d]^{\overline{\epsilon}}
     \\
      &\lS}\hspace{10pt}
    \xymatrix@C=3pc{\dual{z}\otimes z 
      \ar[r]^-{\overline{k}_1}\ar[dr]_{\epsilon} &
      \wbar\ar[d]^{\overline{\epsilon}}\\
      &\lS}
  \end{equation}
\end{thm}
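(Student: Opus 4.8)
The plan is to produce $\overline{\epsilon}$ as a single \emph{generalized evaluation map} attached to the entire cofiber sequence, so that the three triangles of~\eqref{eq:tc5a} become three instances of one naturality statement. Choose a coherent cofiber sequence $X\in\D(\boxbar)$ extending the given triangle. By the observations preceding the theorem (a special case of \autoref{thm:tc2b}), the pointwise dual $\dual X\in\D(\boxbar\op)$ is a coherent cofiber sequence for the dual triangle $\dual z\xto{\dual g}\dual y\xto{\dual f}\dual x\to\Sigma\dual z$. We then have the generalized evaluation map $\epsilon\colon\dual X\otimes_{[\boxbar]}X\to\lS$ of \autoref{sec:duality} --- the same map used in the proof of \autoref{thm:dtduals-3for2} --- and we will set $\overline{\epsilon}\coloneqq\epsilon$, once $\dual X\otimes_{[\boxbar]}X$ has been identified with $\wbar$.

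That identification is the first real step. Since the two squares comprising $X$ are cocartesian, $X$ is built from a small piece of data by left Kan extensions, so iterating the device from the proof of \autoref{thm:tc3-rotate} --- write a cocartesian square as $(i_\ulcorner)_!(i_\ulcorner)^*$ of its restriction and then apply \autoref{thm:tpadj} --- collapses the canceling tensor product over $\boxbar$ to one over $\bbtwo$. Collapsing by folding up one or the other of the two cocartesian squares yields
\[ \dual X\otimes_{[\boxbar]}X \;\cong\; \dual g\otimes_{[\bbtwo]}g \;\cong\; \dual f\otimes_{[\bbtwo]}f \;\cong\; \dual h\otimes_{[\bbtwo]}h, \]
which is exactly the chain of isomorphisms (obtained via \autoref{thm:tc3-rotate}) used just before the theorem to define $\wbar$. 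Along the way one checks that these isomorphisms carry the canonical maps into the coend from the diagonal vertices $\dual x\otimes x$, $\dual y\otimes y$, $\dual z\otimes z$ of $\dual X\otimes_{[\boxbar]}X$ onto the morphisms $\overline{k}_3$, $\overline{k}_2$, $\overline{k}_1$ of the (TC3$'$) structure on $\wbar$; this is done by unwinding the definitions of the $k_i$ precisely as the various morphisms were identified in the proofs of \autoref{thm:tc3} and \autoref{thm:tc3-rotate}.

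Granting this, each triangle of~\eqref{eq:tc5a} becomes the assertion that $\overline{\epsilon}$, precomposed with the canonical map $\dual{X_c}\otimes X_c\to\dual X\otimes_{[\boxbar]}X$ into the coend at a diagonal vertex $c$, agrees with the pointwise evaluation map $\epsilon\colon\dual{X_c}\otimes X_c\to\lS$ of $X_c$. This is a general fact about the generalized evaluation map: the map into the coend at $c$ is induced by the identity arrow $\id_c\in\tw(\boxbar)$, so chasing the two-variable adjunction from which $\epsilon$ is defined, together with the compatibility~\eqref{eq:homrpsnat} of the internal-hom with restriction along $c\colon\bbone\to\boxbar$, exhibits the composite as the pointwise evaluation. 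Applying this with $c$ the vertex carrying $x$, then $y$, then $z$ gives the three commuting diagrams.

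The hard part is the bookkeeping in the second paragraph: one must track the shapes $\boxbar$, $\Box$, $\bbtwo$ and the restriction functors relating them through the rather intricate constructions in the proofs of \autoref{thm:tc3} and \autoref{thm:tc3-rotate}, so as to be certain that the canonical maps into the coend are literally the morphisms $\overline{k}_i$ of May's axiom. Once that matching is made, the remainder is a routine diagram chase with the defining adjunction of the evaluation map, and --- as with the earlier axioms --- no hypotheses on \D beyond stability, the monoidal structure, and closedness are needed.
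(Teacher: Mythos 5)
Your strategy — define $\overline\epsilon$ as a generalized evaluation/counit, and reduce each triangle of~\eqref{eq:tc5a} to a naturality statement — is essentially the paper's. The two places where you diverge are the choice of shape and the phrasing of the naturality step, and both have consequences worth flagging.

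First, the shape. You set $X\in\D(\boxbar)$ (the full cofiber sequence) and assert that $\dual X\otimes_{[\boxbar]}X\cong\wbar$ follows by ``iterating the device from the proof of \autoref{thm:tc3-rotate}.'' That device, via \autoref{thm:tpadj}, converts \emph{left} Kan extensions in one argument of the canceling tensor into restrictions in the other. But a cofiber sequence in $\D(\boxbar)$ is not built purely from left Kan extensions: the passage from $\D(\bbtwo)$ to the cofiber square and from the cofiber square to the full $\boxbar$-shape both involve extension by zero, i.e.\ a \emph{right} Kan extension along a sieve. In the proof of \autoref{thm:tc3-rotate} the extension-by-zero step is handled by a specific interchange (using \cite[Remark 4.2]{gps:stable}) that recasts the sieve extension of the \emph{other} factor as a left Kan extension of a cofiber; that trick is tailored to the $\Box\leftrightarrow\ulcorner\leftrightarrow\bbtwo$ situation and is not literally ``iterated.'' The paper avoids all this by taking $X$ to be the cofiber \emph{square} in $\D(\Box)$ and invoking \autoref{thm:tc3-rotate} directly, which already identifies $\dual X\otimes_{[\Box]}X$ with $\wbar$. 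Your reduction $\dual X\otimes_{[\boxbar]}X\cong\dual g\otimes_{[\bbtwo]}g$ is a separate lemma that needs its own proof (plausibly by first collapsing the zeros, and you are right that zero-valued vertices contribute trivially, but that has to be argued for the whole $\tw(\boxbar)\op$-shaped diagram, not just the diagonal); as written it is a gap.

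Second, the naturality step. You phrase the commuting triangles as ``the map into the coend at the diagonal vertex $c$, composed with $\overline\epsilon$, is the pointwise evaluation,'' appealing to the two-variable adjunction and~\eqref{eq:homrpsnat}. The paper instead introduces the degenerate square $Y$ (with $y$ in two spots and $0$ elsewhere) and uses extraordinary naturality of the counit applied to the map $X\to Y$: this produces a concrete commuting square whose left edge is $\kbar_2$ tensored into $\dual X\otimes_{[\Box]}X$ and whose top and right edges are visibly isomorphisms onto $\dual y\otimes y$ and the pointwise $\epsilon$. The two formulations carry the same content, but the paper's route sidesteps having to formalize ``the map into the coend at a vertex'' as a derivator-level notion and, more importantly, gives a mechanism for identifying $\kbar_i$ that doesn't depend on the $\boxbar$-collapse. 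If you keep your formulation you should spell out why the inclusion of the diagonal vertex, transported across your chain of isomorphisms, lands exactly on $\kbar_i$; that is the piece you have labelled ``bookkeeping'' and it is precisely what the paper's auxiliary square $Y$ is doing.

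In short: same idea, but switch to the cofiber square $X\in\D(\Box)$ and use the paper's auxiliary-object/extraordinary-naturality argument, and both of your acknowledged bookkeeping burdens disappear.
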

\begin{proof}
  As in the proof of \autoref{thm:tc3-rotate}, we can obtain $\wbar$ as the canceling tensor product over $\Box$ of the two canonical diagrams
  \begin{equation}
    \vcenter{\xymatrix{
        \dual x \ar@{<-}[r]^{\dual f}\ar@{<-}[d] &
        \dual y \ar@{<-}[d]^{\dual g}\\
        0 \ar@{<-}[r] &
        \dual z
      }}
    \qquad\text{and}\qquad
    \vcenter{\xymatrix{
        x \ar[r]^f\ar[d] &
        y \ar[d]^g\\
        0 \ar[r] &
        z.
      }}
  \end{equation}
  The second of these is the standard cofiber square for $(x\xto{f}y)\in\D(\bbtwo)$; let us denote it by $X$.
  The first is its pointwise canonical dual.
  But now, the counit of the adjunction
  \[ \D(\bbone)(M \otimes_{[\Box]} X,N) \cong
  \D(\Box\op)(M, \homre{X}{N})
  \]
  at $\lS$ is a morphism
  \begin{equation}
    \wbar = \dual{X} \otimes_{[\Box]} X \too \lS,\label{eq:tc5a0}
  \end{equation}
  which we may denote by $\overline{\epsilon}$.

  It remains to show that $\overline{\epsilon} \circ \kbar_i = \epsilon$ for $i=1,2,3$.
  Let $Y$ denote the square
  \begin{equation}
    \vcenter{\xymatrix@-.5pc{
        y \ar[r]\ar[d] &
        y \ar[d]\\
        0\ar[r] &
        0.
      }}
  \end{equation}
  Then by the proof of \autoref{thm:tc3-rotate}, $\kbar_2$ is induced by tensoring $X$ with the map of squares
  $\dual{Y} \to \dual{X}$ which is
  induced by the evident map of squares $X\to Y$.
  Thus, by extraordinary naturality \cite{ek:gen-funct-calc} of the counits of adjunctions, we have a commutative square
  \begin{equation}
    \vcenter{\xymatrix{
        \dual{Y} \otimes_{[\Box]} X \ar[r]\ar[d]_{\kbar_2} &
        \dual{Y} \otimes_{[\Box]} Y \ar[d]^{\epsilon}\\
        \dual{X} \otimes_{[\Box]} X\ar[r]_-{\overline{\epsilon}} &
        \lS.
      }}
  \end{equation}
  It is easy to see that the top map here is an isomorphism, both objects being isomorphic to $\dual y \otimes y$, and that $\epsilon$ is simply the counit of $y$.
  Thus, we have $\overline{\epsilon}\circ \kbar_2 = \epsilon$, as desired.
  The cases $i=1,3$ are nearly identical.
\end{proof}

\section{Duality for profunctors}\label{prof-dual}

To complete the proof of additivity, we need to consider not only the symmetric monoidal duality discussed in \S\ref{sec:duality}, but
also the generalization to bicategorical duality in the special case of the bicategory $\cProf(\D)$ from \S\ref{sec:ends-coends}.  We will recall all of the results we 
require here; see e.g.~\cite[Ch.~16]{maysig:pht} for the general theory of duality in bicategories.

Recall that a bicategory is said to be \textbf{closed} if its composition functors are two-variable left adjoints.

\begin{prop}\label{thm:bicat-closed}
  If \D is a closed monoidal derivator, then $\cProf(\D)$ is closed.
\end{prop}
\begin{proof}
  By \autoref{thm:tensor-adj}.
\end{proof}

Thus, we have natural isomorphisms
\begin{align}
  \D(A\times C\op)(X\otimes_{[B]} Y,Z)
  &\cong
  \D(A\times B\op)(X, \homr{C}{Y}{Z})\\
  &\cong
  \D(B\times C\op)(Y, \homl{A}{X}{Z}).
\end{align}

Following the symmetric monoidal case,
 we say that $X\in\cProf(\D)(A,B) = \D(A\times B\op)$ is \textbf{right dualizable} if the canonical map
\begin{equation}\label{eq:bicatdualmap}
  \mu_{X,U}\colon U\otimes_{[B]} (\homr{B}{X}{\lI_B}) \too \homr{B}{X}{U}
\end{equation}
is an isomorphism, for all $U\in \cProf(\D)(C,B)$.
As before, it suffices to require this when $C=A$ and $U=X$.

We write $\dualr X = \homr{B}{X}{\lI_B}\in\cProf(\D)(B,A)$ and call it the \textbf{canonical right dual}.
Similarly, we have the canonical left dual $\duall X = \homl{A}{X}{\lI_A}$.
If $X$ is right dualizable, then $\dualr X$ is left dualizable and $X\cong \duall {\dualr X}$.
To give some intuition, we describe some easy ways to obtain right dualizable objects.

\begin{lem}\label{thm:bcodual}
  For any $f\colon A\to B$, the diagram $(f\times 1)^*\lI_B \in \cProf(\D)(A,B)$ is right dualizable.
\end{lem}
\begin{proof}
  Recall that \autoref{thm:tpadj}\ref{item:tpadj2} gives isomorphisms
  \begin{align*}
    Y \otimes_{[A]} (f\times 1)^*\lI_B &\cong (1\times f\op)_! Y \otimes_{[B]} \lI_B\\
    &\cong (1\times f\op)_! Y
  \end{align*}
  natural in $Y\in \cProf(\D)(C,A)$.
  Therefore, the right adjoints of the functors
  \[ (-\otimes_{[A]} (f\times 1)^*\lI_B) \qquad\text{and}\qquad (1\times f\op)_!,\]
  namely
  \[(\homr{B}{(f\times 1)^*\lI_B}{-}) \qquad\text{and}\qquad (1\times f\op)^*,\]
  are also isomorphic, by isomorphisms which respect the adjunction units and counits.
  Now recall that~\eqref{eq:bicatdualmap} is by definition the adjunct of the composite
  \begin{align*}
    \big(U\otimes_{[B]} (\homr{B}{X}{\lI_B})\big)  \otimes_{[A]} X
    &\toiso U\otimes_{[B]} \big((\homr{B}{X}{\lI_B})  \otimes_{[A]} X\big)\\
    &\to\; U \otimes_{[B]} \lI_B\\
    &\toiso U
  \end{align*}
  in which the middle map is $U$ tensored with the counit of the adjunction
  \begin{equation}
    (-\otimes_{[A]} X)\dashv (\homr{B}{X}{-}).
  \end{equation}
  Thus, in the case $X=(f\times 1)^*\lI_B$, by passing across the above isomorphism of adjunctions and using \autoref{rmk:tpadj-assoc}, we can identify~\eqref{eq:bicatdualmap} with the adjunct of the composite
  \begin{align*}
    (1\times f\op)_!\big(U\otimes_{[B]} (1\times f\op)^*\lI_B\big)
    &\toiso U\otimes_{[B]} (1\times f\op)_!(1\times f\op)^*\lI_B\\
    &\to\; U \otimes_{[B]} \lI_B\\
    &\toiso U.
  \end{align*}
  in which the middle map is $U$ tensored with the counit of the adjunction $(1\times f\op)_!\dashv (1\times f\op)^*$.
  But by functoriality of mates, this composite is equal to
  \begin{align*}
    (1\times f\op)_!\big(U\otimes_{[B]} (1\times f\op)^*\lI_B\big)
    &\toiso (1\times f\op)_!(1\times f\op)^*(U\otimes_{[B]} \lI_B)\\
    &\to\; U \otimes_{[B]} \lI_B\\
    &\toiso U
  \end{align*}
  in which the counit is no longer tensored with anything.
  Since the adjunct of a counit is an identity,~\eqref{eq:bicatdualmap} is an isomorphism.
\end{proof}

In particular, the proof of \autoref{thm:bcodual} shows that the canonical right dual of $(f\times 1)^*\lI_B$ is $(1\times f\op)^*\lI_B$.
These are analogues of the classical \emph{representable profunctors} $B(-,f-)$ and $B(f-,-)$.

\begin{lem}\label{thm:duals-compose}
  If $X\in\cProf(\D)(A,B)$ and $Y\in\cProf(\D)(B,C)$ are right dualizable, so is $X\otimes_{[B]} Y$.
\end{lem}
\begin{proof}
  $\mu_{X\otimes_{[B]} Y,U}$ factors as $\mu_{X,Y\rhd_{[C]} U}$ followed by $\homr{B}{1_X}{\mu_{Y,U}}$.
\end{proof}

We can also identify the canonical right dual of $X\otimes_{[B]} Y$ as $\dualr Y \otimes_{[B]} \dualr X$.

\begin{lem}\label{thm:dual-objwise}
  A coherent diagram $X\in\cProf(\D)(A,B)$ is right dualizable if and only if $X_a \in \cProf(\D)(\bbone,B)$ is right dualizable for every $a\in A$.
\end{lem}
\begin{proof}
  By (Der2), the map~\eqref{eq:bicatdualmap} is an isomorphism just when it is so when restricted to each $a\in A$.
  All the functors involved are pseudonatural in the extra variables, so the restriction of $\mu_{X,U}$ to $a\in A$ is $\mu_{X_a,U}$.
  Thus, $\mu_{X,U}$ is an isomorphism if and only if each $\mu_{X_a,U}$ is so.
\end{proof}

In particular, \autoref{thm:dual-objwise} tells us that $X\in\D(A)$ is right dualizable, when regarded as a morphism from $A$ to \bbone in $\cProf(\D)$, 
if and only if each object in its underlying diagram is dualizable in $\D(\bbone)$.
For instance, an object $(x\xto{f}y) \in\D(\bbtwo)$, regarded as a morphism from $\bbtwo$ to \bbone in $\cProf(\D)$, 
is right dualizable just when $x$ and $y$ are dualizable, 
and in this case its right dual is just its ``pointwise dual'' $(\dual x \xot{\dual f} \dual y) \in\D(\bbtwo\op)$.

On the other hand, in general, right dualizability of $X$ regarded as a morphism in the other direction, from $\bbone$ to $A$, depends not just on the objects of $X$ but on $A$.
In the stable case, we have the following.

\begin{lem}\label{thm:totaldual}
  Let $X\in \cProf(\D)(\bbone,\bbtwo) = \D(\bbtwo\op)$. If the underlying objects of~$X$ are dualizable, then $X$ is right dualizable.
\end{lem}
\begin{proof}
  Suppose $X = (y\xot{f} x)$.
  We may extend $X$ to a bicartesian square of the form
  \begin{equation}
    \vcenter{\xymatrix@-.5pc{
      u\ar[r]^k\ar[d] &
      x\ar[d]^f\\
      0\ar[r] &
      y.
      }}
  \end{equation}
  By restriction, we obtain a cube
  \begin{equation}\label{eq:totaldualcube}
    \vcenter{\xymatrix@-1pc{u \ar[dr] \ar@{<-}[rr] \ar[dd] && 0 \ar[dr] \ar'[d][dd] \\
      & x \ar@{<-}[rr] \ar[dd] && x \ar[dd] \\
      0 \ar@{<-}'[r][rr] \ar[dr] && 0 \ar[dr] \\
      & y \ar@{<-}[rr]_f && x.
    }}
  \end{equation}
  Its left and right face are bicartesian, so by \cite[Corollary~2.6]{groth:ptstab} 
  it is bicartesian in $\shift{\D}{\bbtwo\op}$.
  Therefore, by the bicategorical version of \autoref{thm:dtduals-3for2} (whose proof is identical), it suffices to show that $(u\ot 0)$ and $(x\ot x)$ are right dualizable.
  But we have $(u\ot 0) \cong u\otimes (\lS\ot 0)$ and $(x\ot x) \cong x\otimes (\lS\ot\lS)$, so since $x$ and $u$ are dualizable 
  (the latter by \autoref{thm:dtduals-3for2}), by \autoref{thm:duals-compose} it suffices to show that $(\lS \ot 0)$ and $(\lS\ot\lS)$ are right dualizable.
  But since $\lI_\bbtwo\in\cProf(\D)(\bbtwo,\bbtwo)=\D(\bbtwo\times\bbtwo\op)$ is given by
  \[\vcenter{\xymatrix@-.5pc{
      \lS \ar@{<-}[r]\ar[d] &
      0 \ar[d]\\
      \lS\ar@{<-}[r]& \lS
    }}\]
these are just the ``representable profunctors'' from \autoref{thm:bcodual} for the two functors $\bbone\to\bbtwo$.
\end{proof}

The proof of \autoref{thm:totaldual} also enables us to identify the bicategorical right dual of $(y\xot{f} x)$.
Namely, applying $\dualr{}$ to~\eqref{eq:totaldualcube} regarded as an object of $\cProf(\D)(\Box,\bbtwo)$, 
we obtain an object of $\cProf(\D)(\bbtwo,\Box)$, which we claim must look like
\begin{equation}\label{eq:totaldualcube2}
  \vcenter{\xymatrix@-1pc{\dual u \ar@{<-}[dr] \ar[rr] \ar@{<-}[dd]
    && \dual u \ar@{<-}[dr]^{\dual k} \ar@{<-}'[d][dd] \\
    & 0 \ar[rr] \ar@{<-}[dd] && \dual x \ar@{<-}[dd]^{\dual f} \\
    0 \ar'[r][rr] \ar@{<-}[dr] && 0 \ar@{<-}[dr] \\
    & \dual z \ar[rr]_{\dual g} && \dual y
  }}
\end{equation}
where $(y\xto{g}z) = \cof(x\xto{f} y)$.
By \autoref{thm:bcodual}, we have $\dualr(\lS\ot\lS) \cong (0 \to\lS)$ and $\dualr(0\ot\lS) \cong (\lS\to\lS)$.
By \autoref{thm:duals-compose}, taking tensor products of these with $x$ and $u$ respectively corresponds in the dual to taking tensor products with $\dual x$ and $\dual u$.
This (together with its functoriality) identifies the top face of~\eqref{eq:totaldualcube2}.

Since~\eqref{eq:totaldualcube2} is also bicartesian, its right and left faces must also be bicartesian.
This identifies the arrow $\dual f$, since it must be the fiber of $\dual k$.
Since the back face of~\eqref{eq:totaldualcube2} is trivially bicartesian, its front face must  be as well.
This identifies the arrow $\dual g$, since it must be the fiber of $\dual f$.
(Of course, we have $z\cong \Sigma u$.)
Thus, the right dual of $f$ is canonically isomorphic to the pointwise dual of its cofiber:
\begin{equation}\label{eq:totaldual}
  \dualr(f) \;\cong\; \dual{(\cof(f))}.
\end{equation}
We can now construct what in~\cite{may:traces} are called \textbf{duals of diagrams}.

\begin{lem}\label{thm:almost-tc5b}
  For $x\xto{f} y$ and $x'\xto{f'}y'$, where $x$ and $y$ are dualizable, we have
  \begin{equation}\label{eq:almost-tc5b}
    \dual{(f \otimes_{[\bbtwo]} f')} \;\cong\; \dual{f'} \otimes_{[\bbtwo]} \dual{(\cof(f))}.
  \end{equation}
  Moreover, under this isomorphism we have the following left-to-right correspondences between the morphisms from (TC3):
  \begin{alignat}{3}
    p_1 &\leftrightarrow j_2, &\qquad
    p_2 &\leftrightarrow j_3, &\qquad
    p_3 &\leftrightarrow j_1, \\
    j_1 &\leftrightarrow p_2, &\qquad
    j_2 &\leftrightarrow p_3, &\qquad
    j_3 &\leftrightarrow p_1.
  \end{alignat}
\end{lem}
\begin{proof}
  By \autoref{thm:totaldual}, the chosen object $(y\xot{f} x) \in \D(\bbtwo\op) = \cProf(\D)(\bbone,\bbtwo)$ is right dualizable.
  We now have the following chain of isomorphisms, natural in $f$ and $f'$.
  \begin{align}
    \dual{(f \otimes_{[\bbtwo]} f')}
    &= \homr{\bbone}{(f\otimes_{[\bbtwo]} f')}{\lI_\bbone}\\
    &\cong \homr{\bbtwo}{f}{(\homr{\bbone}{f'}{\lI_\bbone})}\\
    &= \homr{\bbtwo}{f}{\dualr f'}\\
    &\cong \dualr f' \otimes_{[\bbtwo]} \dualr f\\
    &\cong \dual{f'} \otimes_{[\bbtwo]} \dual{(\cof(f))}.
  \end{align}
  Here the equality in the first line is the definition of the canonical dual $\dual{(f \otimes_{[\bbtwo]} f')}$, the isomorphism in the second line is an instance of a general fact in any closed bicategory, and the equality in the third line is the definition of the canonical right dual $\dualr {f'}$.
  The isomorphism in the fourth line is again an instance of a general fact about dualizable 1-morphisms in a closed bicategory (applied here to $f$).
  Finally, the isomorphism in the last line uses the observation after \autoref{thm:dual-objwise} that the right dual of $f'$ is its pointwise dual, along with the identification~\eqref{eq:totaldual} of $\dualr f$.

  It is easy to verify that when $f$ or $f'$ has one of the ``degenerate'' forms appearing in the outer parts of \autoref{fig:tc3pf}, this isomorphism is essentially an identity.
  Thus, we can easily trace through the construction of the $p$'s and $j$'s to find the desired identifications.
\end{proof}

In particular, if we let $f'$ be $\dual{(\cof(f))}$, then $f \otimes_{[\bbtwo]} f'$ in~\eqref{eq:almost-tc5b} occurs in a (TC3) diagram for 
distinguished triangles $(f,g,h)$ and $(\dual g,\dual f,\dual{\Sigma^{-1} h})$, while $\dual{f'} \otimes_{[\bbtwo]} \dual{(\cof(f))}$ 
occurs in a (TC3$'$) diagram for these same triangles in the reverse order (using the identification of $f$ with its double pointwise dual).
Following~\cite{may:traces}, we call the (TC3$'$) diagram for $(\dual g,\dual f,\dual{\Sigma^{-1} h})$ and $(f,g,h)$ the \textbf{dual} of the (TC3) diagram for $(f,g,h)$ and 
$(\dual g,\dual f,\dual{\Sigma^{-1} h})$.

We can now observe that the second half of May's final axiom holds.
The tautologous nature of the proof shows the advantage of derivators, where objects are specified by true universal properties, over triangulated categories, where they can only be asserted to exist.

\begin{thm}[TC5b]\label{thm:tc5b}
  If $x$, $y$, and $z$ are dualizable, then the (TC3\/$'$) diagram specified in (TC5a) for the triangles $(\dual{g}, \dual{f}, \dual{\Sigma^{-1}h})$ 
  and $(f, g, h)$ is isomorphic to the dual of a (TC3) diagram for the same triangles in reverse order, and satisfies the additivity axiom (TC4) with respect to an involution of the latter diagram.
\end{thm}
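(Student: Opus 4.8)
The plan is to treat the whole statement as an exercise in matching up data that the previous results have already pinned down by universal properties, in the spirit of \autoref{thm:tc5a} and \autoref{thm:almost-tc5b}. First I would record that, in the present situation, the dual triangle $(\dual{g},\dual{f},\dual{\Sigma^{-1}h})$ is itself canonically specified: it comes from the pointwise dual $\dual{X}$ of the chosen cofiber sequence $X$ presenting $(f,g,h)$, using that $\dual{(-)}$ carries colimits to limits and hence --- by stability --- bicartesian squares to bicartesian squares, as recorded before \autoref{thm:tc5a}. Consequently the (TC3) diagram for $(f,g,h)$ and $(\dual{g},\dual{f},\dual{\Sigma^{-1}h})$, with central object $v=f\otimes_{[\bbtwo]}\dual{g}$ and maps $p_i,j_i$, is canonically specified by \autoref{thm:tc3}, as is the (TC3$'$) diagram of \autoref{thm:tc5a}, with central object $\wbar$, maps $\kbar_i,\qbar_i$, and evaluation $\overline{\epsilon}$.

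For the first assertion I would apply the pointwise-dual morphism $\dual{(-)}\colon\D\op\to\D$ to the (TC3) diagram just described. Since $\homrbare\op$ is cocontinuous in each variable (\autoref{thm:cycle}), this sends the coherent diagram of shape $C$ (the shape of \autoref{fig:tc3}) to a coherent diagram of shape $C\op$, and \autoref{thm:almost-tc5b}, applied with $f'=\dual{g}$ (so that $\dual{f'}\cong g$ and $\dual{(\cof f)}=\dual{g}$), identifies it with the (TC3$'$) diagram for $(\dual{g},\dual{f},\dual{\Sigma^{-1}h})$ and $(f,g,h)$ --- which is precisely the one occurring in \autoref{thm:tc5a} --- carrying each $p_i$ and $j_i$ to the appropriate $\kbar_{\bullet}$ or $\qbar_{\bullet}$ via the left-to-right correspondences of \autoref{thm:almost-tc5b} together with \autoref{thm:tc3-rotate}'s description of the maps of a (TC3$'$) diagram in terms of those of a rotated (TC3) input. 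It then remains to see that the map $\overline{\epsilon}$ is respected: by the proof of \autoref{thm:tc5a}, $\overline{\epsilon}$ is the counit of $(-\otimes_{[\Box]}X)\dashv(\homre{X}{-})$ evaluated at $\lS$, and the last isomorphism in the chain of \autoref{thm:almost-tc5b} --- the one built from \eqref{eq:totaldual} and the hom--tensor counit --- carries exactly this counit onto the comparison map $\wbar\to\lS$ obtained by dualizing. The identities $\overline{\epsilon}\circ\kbar_i=\epsilon$ from \autoref{thm:tc5a} and the ``degenerate-input'' computations in the proof of \autoref{thm:almost-tc5b} then confirm agreement on the subdiagrams $\dual{x}\otimes x$, $\dual{y}\otimes y$, $\dual{z}\otimes z$. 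This half uses only that $\D$ is closed monoidal and stable.

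For the second assertion, assume in addition that $\D$ is symmetric. By \autoref{thm:involution}, an involution of the (TC3) diagram for $(f,g,h)$ and $(\dual{g},\dual{f},\dual{\Sigma^{-1}h})$ is realised by the canonically specified (TC3) diagram for $(\dual{g},\dual{f},\dual{\Sigma^{-1}h})$ and $(f,g,h)$, with $\gamma\colon v\toiso\vbar$ intertwining the $p_i,j_i$ with the $\pbar_{\bullet},\jbar_{\bullet}$ through $\symm$. Now apply \autoref{thm:tc4} to the inputs $(\dual{g},\dual{f},\dual{\Sigma^{-1}h})$ and $(f,g,h)$: its (TC3) object is $\vbar=\dual{g}\otimes_{[\bbtwo]}f$ and its (TC3$'$) object is $\cof(\dual{g})\otimes_{[\bbtwo]}f=\dual{f}\otimes_{[\bbtwo]}f$, which by \autoref{thm:tc3-rotate} and the construction of $\wbar$ in \autoref{thm:tc5a} is exactly $\wbar$ with its maps $\kbar_i,\qbar_i$. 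Hence the cocartesian square produced by \autoref{thm:tc4} is precisely the (TC4) square relating the (TC3$'$) diagram $\wbar$ of the first half to this involution of the latter (TC3) diagram, which is the additivity axiom in question; transporting along $\gamma$ restates it in terms of the (TC3) diagram for $(f,g,h)$ and $(\dual{g},\dual{f},\dual{\Sigma^{-1}h})$ itself.

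The only real work --- and hence the step I expect to be the main obstacle --- is the bookkeeping: tracing, through the composite isomorphisms of \autoref{thm:almost-tc5b}, \autoref{thm:tc3-rotate}, \autoref{thm:involution} and the adjunction counit of \autoref{thm:tc5a}, which canonically specified object and arrow on each side matches which, and checking that the various minus signs --- those in the (TC3) and (TC3$'$) diagrams (\autoref{rmk:sign}), the sign attached to the dual of the triangle (\autoref{thm:tc2b}), and the one in the middle map of (TC4) --- all come out consistently under dualization. As the authors stress, these compatibilities are automatic precisely because every object in sight is determined up to unique isomorphism; there is no new geometric input, only the careful identification that a triangulated-category argument could not even phrase.
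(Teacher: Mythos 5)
Your proposal is correct and takes essentially the same approach as the paper's proof, which is deliberately terse: because all the (TC3), (TC3\/$'$), and involution diagrams are canonically specified by universal properties, the paper simply observes that the dual of \emph{the} (TC3) diagram for $(f,g,h)$ and $(Dg,Df,D\Sigma^{-1}h)$ \emph{is}, by the definition given just before \autoref{thm:tc5b}, \emph{the} (TC3\/$'$) diagram for the reversed pair, that \emph{the} involution is \emph{the} (TC3) diagram for the reversed pair, and that \autoref{thm:tc4} applies generically. Your write-up fleshes out the bookkeeping (tracing through \autoref{thm:almost-tc5b}, \autoref{thm:tc3-rotate}, and the counit description of $\overline{\epsilon}$) that the paper compresses into the phrase ``canonically specified,'' but the underlying argument is the same tautology.
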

\begin{proof}
  Since all of our diagrams are canonically specified by universal properties, we can say that \emph{the} dual of \emph{the} (TC3) 
  diagram for $(f, g, h)$ and $(\dual{g}, \dual{f}, \dual{\Sigma^{-1}h})$ is \emph{the} (TC3$'$) diagram for $(\dual{g}, \dual{f}, \dual{\Sigma^{-1}h})$ 
  and $(f, g, h)$, while \emph{the} involution of the former diagram is \emph{the} (TC3) diagram for $(\dual{g}, \dual{f}, \dual{\Sigma^{-1}h})$ and $(f, g, h)$.
  Thus, since \autoref{thm:tc4} (TC4) is a statement about \emph{the} (TC3) and (TC3$'$) diagrams for any pair of triangles, it holds for these.
\end{proof}

Finally, we follow May in concluding the dual of \autoref{thm:tc5a} (TC5a).
For any object $x$, we have a morphism $\lS\too \homre x x$ which is the adjunct of the unit isomorphism $\lS \otimes x \cong x$.
If $x$ is dualizable, the composite of this with $(\mu_{x,x})^{-1}$ defines a map
\[\eta\colon \lS\rightarrow x\otimes \dual x\]
which is called the \textbf{coevaluation} of $x$.
(Dualizability can equivalently be defined as the existence of such a coevaluation, satisfying the ``triangle'' or ``zig-zag'' identities relating 
it to the  evaluation map $\epsilon\colon\dual x \otimes x \to \lS$.)  See \cite{dp:duality, lms:equivariant} for further details. 

Now,
combining \autoref{thm:tc5b} with \autoref{thm:tc5a}, we obtain the following result, whose statement and proof are both identical to \cite[Lemma 4.14]{may:traces}.

\begin{lem}\label{lem:tc5av}
  For $v$ as defined in (TC3) for the distinguished triangles $(f,g,h)$ and $(Dg, Df, D\Sigma^{-1}h)$, there is a map $\bar{\eta}\colon \lS\rightarrow v$ such that the following diagrams commute
  \[
\xymatrix@C=3pc{
    \lS\ar[dr]^\eta\ar[d]^{\bar{\eta}}\\
    v\ar[r]_-{j_1}&x\otimes \dual{x}
    }\hspace{10pt}
\xymatrix@C=3pc{
    \lS\ar[dr]^\eta\ar[d]^{\bar{\eta}}\\
    v\ar[r]_-{j_2}&y\otimes \dual{y}
    }\hspace{10pt}
\xymatrix@C=3pc{
    \lS\ar[dr]^\eta\ar[d]^{\bar{\eta}}\\
    v\ar[r]_-{j_3}&z\otimes \dual{z}.
    }
  \]
\end{lem}

\section{The additivity of traces in stable monoidal derivators}
\label{sec:introadditivity}

Finally, we are ready to consider traces.
We begin by recalling the definitions.
When $x$ is dualizable and $f\colon p\otimes x\rightarrow x\otimes q$ is any morphism, the \textbf{trace} of $f$ is defined to be the composite 
\[\xymatrix{p\cong p\otimes \lS\ar[r]^-{\id\otimes \eta}&p\otimes x\otimes \dual{x}\ar[r]^-\symm
&\dual{x}\otimes p\otimes x\ar[r]^-{\id \otimes f}&
\dual{x}\otimes x\otimes q\ar[r]^{\epsilon\otimes \id}&\lS\otimes q\cong q.
}\]
An important special case is when $p$ and $q$ are the unit object \lS, in which case the trace of $f\colon x\to x$ is an endomorphism of $\lS$.
In the even more special case when $f$ is the identity morphism $1_x$, its trace is called the \textbf{Euler characteristic} of $x$.

The desired additivity theorem for traces says that given a distinguished triangle
\[ \DT x f y g z h {\Sigma x} \]
with $x$, $y$, and hence $z$ dualizable (see \autoref{thm:dtduals-3for2}), and a diagram of the form
\[\xymatrix@C=3pc{ 
  p\otimes x\ar[r]^-{1_p\otimes f}\ar[d]^{\phi_x} &
  p\otimes y\ar[r]^-{1_p\otimes g}\ar[d]^{\phi_y} &
  p\otimes z\ar[r]^-{1_p\otimes h}\ar[d]^{\phi_z} &
  p\otimes \Sigma x \ar[d]^{\Sigma \phi_x}\\
  x\otimes q\ar[r]_-{f\otimes 1_q} &
  y\otimes q\ar[r]_-{g\otimes 1_q} &
  z\otimes q\ar[r]_-{h\otimes 1_q} &
  \Sigma x \otimes q
} \]
then we have
\[\tr(\phi_x)+\tr(\phi_z) = \tr(\phi_y).\]
However, if by ``diagram'' we mean an \emph{incoherent} diagram, i.e.\ a diagram in a triangulated category, then this might not be true (see for instance~\cite{ferrand:nonadd}).
We need $(\phi_x,\phi_y,\phi_z)$ to be coherent in an appropriate sense; this is exactly what the machinery of derivators is designed to identify.

We will first state the theorem for an explicitly coherent choice of $\phi$'s.
Then we will deduce a corollary whose statement, at least, makes sense in the language of triangulated categories.

\begin{thm}\label{thm:additivity}
  Let $\D$ be a closed symmetric monoidal stable derivator.
  Suppose we have a bicartesian square
  \begin{equation}
  \vcenter{\xymatrix{
      x\ar[r]^f\ar[d] &
      y\ar[d]^g\\
      0\ar[r] &
      z
      }}
  \end{equation}
  denoted $X\in \D(\Box)$, objects $p,q\in\D(\bbone)$, and a morphism $\phi\colon p\otimes X \to X\otimes q$ in $\D(\Box)$.
  If any two of $x$, $y$ and  $z$ are dualizable in $\D(\bbone)$, then
  \[\tr(\phi_x)+\tr(\phi_z)=\tr(\phi_y)\]
  as morphisms $p\to q$ in $\D(\bbone)$.
\end{thm}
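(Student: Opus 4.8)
The strategy is to follow May's proof of additivity (\cite[\S4]{may:traces}) essentially verbatim, using the fact that in a stable derivator all of the auxiliary objects and morphisms appearing in May's argument---the (TC3) and (TC3$'$) diagrams, their involutions, the duals of diagrams, and the comparison maps $\bar\eta$, $\bar\epsilon$---have already been constructed with genuine universal properties in Theorems \ref{thm:tc3}, \ref{thm:tc4}, \ref{thm:tc5a}, \ref{thm:tc5b}, and Lemmas \ref{thm:involution}, \ref{thm:almost-tc5b}, \ref{lem:tc5av}. First I would reduce to the case that $x$ and $y$ (hence $z$, by \autoref{thm:dtduals-3for2}) are dualizable: the other two cases of ``any two'' follow by rotating the triangle, since a rotation permutes $(x,y,z)$ up to suspension and the trace is unchanged under the relevant re-indexing (this uses \autoref{thm:tc3-rotate} to see that the rotated pushout-product data is the same).

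Next I would set up the coherent data. From $\phi\colon p\otimes X\to X\otimes q$ in $\D(\Box)$ and the distinguished triangle $(f,g,h)$, together with its pointwise dual triangle $(\dual g,\dual f,\dual{\Sigma^{-1}h})$, form the pushout-product object $v = (\Sigma^{-1}h)\otimes_{[\bbtwo]}(\text{something})$ and more precisely the object $v$ of \autoref{thm:tc3} associated to the two triangles $(f,g,h)$ and $(\dual g,\dual f,\dual{\Sigma^{-1}h})$; the morphism $\phi$ induces, by functoriality of $\otimes_{[\bbtwo]}$, a self-map $\phi_v\colon p\otimes v\to v\otimes q$ compatible via $j_1,j_2,j_3$ with $\phi_x,\phi_y,\phi_z$ (and via $p_1,p_2,p_3$ with the analogous maps on the duals). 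The heart of the argument is then the ``diagram chase'' that May performs: using the maps $\bar\eta\colon\lS\to v$ from \autoref{lem:tc5av} and $\bar\epsilon\colon \vbar\to\lS$ from \autoref{thm:tc5a}, together with the compatibilities $\overline\epsilon\circ\kbar_i=\epsilon$, $j_i\circ\bar\eta = \eta$ (appropriately dualized), and the identification of $\vbar$ with $\dual v$ in the closed bicategory $\cProf(\D)$ from \autoref{thm:almost-tc5b}, one expresses $\tr(\phi_x)+\tr(\phi_z)$ and $\tr(\phi_y)$ each as a single composite built out of $\bar\eta$, $\phi_v$, and $\bar\epsilon$, and observes they coincide. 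Concretely, the key identity is that the sum $\tr(\phi_x)+\tr(\phi_z)$, computed via the two ``outer'' triangles in the (TC3) diagram, equals the trace computed through $v$, which in turn equals $\tr(\phi_y)$ via the ``middle'' triangle; the additivity of the two contributions on the $v$-side comes precisely from the cocartesian square of \autoref{thm:tc4} (TC4) relating $v$, $y\otimes y'$, $(x\otimes z')\oplus(z\otimes x')$, and $w$, which splits the relevant map as a sum.

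I expect the main obstacle to be bookkeeping rather than conceptual: correctly matching up the orientations, the minus signs (the one flagged in \autoref{rmk:sign} and reappearing in \autoref{thm:better-tc1}, \autoref{thm:tc3}, \autoref{eq:tc4dt2}, and \autoref{thm:tc2b}), and the placement of symmetry isomorphisms $\symm$ so that the two composites literally agree. Since all of this was already done by May for triangulated categories satisfying his axioms, and we have verified every one of those axioms (TC1)--(TC5) as \emph{theorems} about stable monoidal derivators---with the crucial bonus that our objects are unique up to canonical isomorphism rather than merely existent---the proof reduces to transcribing \cite[\S4]{may:traces}, with the simplification that none of the ``choices'' May had to make and track are present. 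Thus I would phrase the proof as: ``The objects and morphisms $v$, $\bar v$, $j_i$, $p_i$, $\bar\eta$, $\bar\epsilon$ constructed above satisfy all the hypotheses of May's additivity argument \cite[Theorem 4.1 and its proof]{may:traces}; applying that argument verbatim yields $\tr(\phi_x)+\tr(\phi_z)=\tr(\phi_y)$.'' The only thing genuinely needing checking is that the coherent $\phi$ of our hypothesis produces the compatible family $(\phi_x,\phi_y,\phi_z,\phi_v,\ldots)$ that May's proof requires, which is immediate from functoriality of the external product and of $\otimes_{[\bbtwo]}$ in $\D(\Box)$-valued arguments.
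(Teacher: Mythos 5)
Your proposal is correct and takes essentially the same approach as the paper, whose proof is explicitly headed ``following \cite{may:traces}'' and amounts to transcribing May's Figure into the diagram of \autoref{fig:additivity}, gluing the pieces using Lemmas~\ref{lem:tc5av} and \ref{thm:involution}, Theorems~\ref{thm:tc4} and \ref{thm:tc5a}, and functoriality of the (TC3$'$) construction in $\phi$. One small remark: the ``rotation'' reduction you propose at the outset is superfluous, since \autoref{thm:dtduals-3for2} already upgrades ``any two of $x,y,z$ dualizable'' to all three being dualizable, so no relabeling of the triangle is needed; and the object $v$ is $f\otimes_{[\bbtwo]}\dual g$ (the pushout product of $f$ with the dual of the cofiber sequence), not a product involving $\Sigma^{-1}h$.
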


\begin{proof}[Proof of \autoref{thm:additivity} following \cite{may:traces}]
  We will construct the (incoherent) commutative diagram in $\D(\bbone)$ shown in \autoref{fig:additivity}.
  This will prove the theorem, since the composite around the left is $\tr(\phi_x)+\tr(\phi_z)$ and the composite around the right is $\tr(\phi_y)$.
  \begin{figure}
    \centering
    \[\xymatrix@C=5pc{
      &p\otimes\lS\ar[d]^{1\otimes \overline{\eta}}\ar[dr]^{1\otimes \eta}\ar[dl]_{(1\otimes \eta,1\otimes \eta)}\\
      (p\otimes z\otimes \dual{z})\oplus(p\otimes x\otimes \dual{x})
      \ar[dd]_{\symm\oplus\symm}\ar[dr]_{[1\otimes p_3,1\otimes p_1]}
      & p\otimes v \ar[l]^-{(1\otimes j_3,1\otimes j_1)} \ar[r]_-{1\otimes j_2}
      & p\otimes y\otimes \dual{y}\ar[dd]^{\symm}\ar[dl]^{1\otimes p_2}\\
      &p\otimes w\ar@{.>}[d]\\
      (\dual{z}\otimes p\otimes z)\oplus (\dual{x}\otimes p\otimes x)
      \ar[r]\ar[d]_{(1\otimes \phi_z)\oplus (1\otimes \phi_x)}
      &\wtil\ar@{.>}[d]
      &\dual{y}\otimes p\otimes y\ar[l]\ar[d]^{1\otimes \phi_y}\\
      (\dual{z}\otimes z\otimes q)\oplus (\dual{x}\otimes x\otimes q)
      \ar[dr]_{[\epsilon\otimes 1, \epsilon\otimes 1]} \ar[r]^-{[\kbar_1\otimes 1,\kbar_3\otimes 1]}
      &\wbar\otimes q\ar[d]^{\overline{\epsilon}\otimes 1}
      &\dual{y}\otimes y\otimes q\ar[l]_-{\kbar_2\otimes 1}\ar[dl]^{\epsilon\otimes 1}\\
      &\lS\otimes q}
    \]
    \caption{The additivity of traces}
    \label{fig:additivity}
  \end{figure}
  As before, we are using the notations:
  \begin{itemize}
  \item $v\coloneqq f \otimes_{[\bbtwo]} \dual g$ occurs in the (TC3) diagram for $(f,g,h)$ and $(Dg, Df, D\Sigma^{-1}h)$.
  \item $w\coloneqq g \otimes_{[\bbtwo]} \dual g$ occurs in the (TC3$'$) diagram for $(f,g,h)$ and $(Dg, Df, D\Sigma^{-1}h)$.
  \item $\wbar \coloneqq \dual g \otimes_{[\bbtwo]} g$ occurs in the (TC3$'$) diagram for $(Dg, Df, D\Sigma^{-1}h)$ and $(f,g,h)$.
  \end{itemize}
  Thus, the lower triangles in \autoref{fig:additivity} commute by \autoref{thm:tc5a} (TC5a) tensored on the right with $q$, 
  while the upper triangles commute by \autoref{lem:tc5av} tensored on the left with $p$.
  Similarly, the quadrilateral involving $p\otimes v$ and $p\otimes w$ commutes by \autoref{thm:tc4} (TC4) tensored on the left with $p$.
  
  Now since the tensor product is cocontinuous in each variable, it preserves all the constructions in (TC3) and (TC3$'$); 
  thus $p\otimes w$ is (canonically isomorphic to) the object occurring in the (TC3$'$) diagram for $(1_p\otimes f,1_p\otimes g,1_p\otimes h)$ and $(Dg, Df, D\Sigma^{-1}h)$.
  (Technically, of course, the former means the bicartesian square $p\otimes X$.)
  Similarly, $\wbar\otimes q$ is the object occurring in the (TC3$'$) diagram for $(Dg, Df, D\Sigma^{-1}h)$ and $(f\otimes 1_q,g\otimes 1_q,h\otimes 1_q)$.

  We define $\wtil$ to be the object defined in the (TC3$'$) diagram for $(Dg, Df, D\Sigma^{-1}h)$ and $(1_p\otimes f,1_p\otimes g,1_p\otimes h)$.
  Then \autoref{thm:involution} yields the dotted map $p\otimes w \to \wtil$ making the two adjacent trapezoids commute.
  Finally, since $\phi$ is a map of bicartesian squares, the functoriality of the (TC3$'$) construction yields the second dotted map 
  $\wtil \to \wbar\otimes q$ making the remaining squares commute.
\end{proof}

We can now deduce a corollary that makes sense in the language of triangulated categories.

\begin{cor}\label{thm:triang-additivity} 
  Let \D be a closed symmetric monoidal stable derivator which is additionally strong.
  Suppose we have a distinguished triangle in $\D(\bbone)$
  \[ \DT x f y g z h {\Sigma x} \]
  in which $x$ and $y$ (hence also $z$) are dualizable, and morphisms $\phi_x$ and $\phi_y$ making the following diagram of solid arrows commute in $\D(\bbone)$
  \[\xymatrix@C=3pc{ 
    p\otimes x\ar[r]^-{1_p\otimes f}\ar[d]^{\phi_x} &
    p\otimes y\ar[r]^-{1_p\otimes g}\ar[d]^{\phi_y} &
    p\otimes z\ar[r]^-{1_p\otimes h}\ar@{.>}[d]^{\phi_z} &
    p\otimes \Sigma x \ar[d]^{\Sigma \phi_x}\\
    x\otimes q\ar[r]_-{f\otimes 1_q} &
    y\otimes q\ar[r]_-{g\otimes 1_q} &
    z\otimes q\ar[r]_-{h\otimes 1_q} &
    \Sigma x \otimes q.
  } \]
  Then there exists a morphism $\phi_z$ making both squares commute, such that
  \[\tr(\phi_x)+\tr(\phi_z)=\tr(\phi_y).\]
\end{cor}
\begin{proof}
  By definition, a distinguished triangle in $\D(\bbone)$ is isomorphic to one arising from a cofiber sequence in \D.
  Thus, we may assume given a bicartesian square $X$ as in \autoref{thm:additivity}.
  Let $X_0$ denote the restriction of $X$ to an object $(x\xto{f} y) \in \D(\bbtwo)$.

  Since $\D$ is strong, the ``underlying diagram'' functor $\D(\bbtwo) \to \D(\bbone)^\bbtwo$ is full.
  Thus, since $(\phi_x,\phi_y)$ defines a morphism $1_p\otimes f\to f \otimes 1_q$ in $\D(\bbone)^\bbtwo$, it must be in the image of some morphism $\phi_0\colon p \otimes X_0 \to X_0 \otimes q$ in $\D(\bbtwo)$.
  Applying extension by zero and a left Kan extension to $\phi_0$, we obtain $\phi\colon p\otimes X\to X\otimes q$.
  Thus, we can apply \autoref{thm:additivity}.
\end{proof}

Of course, when $\phi_x$ and $\phi_y$ are identities, we may choose $\phi_0$ to be the identity of $X_0$, so that $\phi_z$ is also an identity.
Thus we recover the additivity of Euler characteristics as a special case.

\appendix

\section{On coends in derivators}
\label{sec:coends}

Recall that in \S\ref{sec:ends-coends} we defined the coend of $X\in \D(A\op\times A)$ to be 
\[ \int^A X \coloneqq (\pi_{\tw(A)\op})_! (t\op,s\op)^* X, \]
This reproduces the usual definition of coend in a represented derivator.
However, there are other ways in which one might try to generalize the construction of coends to derivators.
For instance, if \bC is a cocomplete ordinary category, then the coend $\int^{a\in A} X(a,a)$ is equivalently a coequalizer
\[ \coprod_{\alpha\colon a_1 \to a_2} X(a_2,a_1) \toto \coprod_a X(a,a). \]
Our coends in derivators can also be rephrased in a similar way, but (as usual in a homotopical context) coequalizers must be generalized to 
``geometric realizations of simplicial bar constructions''.
In a derivator, ``geometric realization'' just means the colimit of a diagram of shape $\bbDelta\op$; we now construct such a diagram whose colimit is the coend of $X\in\D(A\op\times A)$.

Let $(\bbDelta/A)\op$ denote the opposite of the category of simplices of $A$.
Its objects are functors $[n] \to A$, where $[n]$ denotes the $(n+1)$-element totally ordered set regarded as a category, 
and its morphisms from $[n] \to A$ to $[m]\to A$ are functors $[m]\to [n]$ over $A$.
(The opposite of the category $A^{\S}$ from~\cite[\S IX.5]{maclane} is contained in $(\bbDelta/A)\op$ as a final, but not homotopy final, subcategory.
This reflects the fact that mere coequalizers suffice to define coends in non-homotopical category theory.)

There is a functor $c\colon (\bbDelta/A)\op \to \tw(A)\op$ which simply composes up a string of arrows; we claim it is homotopy final.
Using the characterization of homotopy final functors in \cite[Corollary 3.13]{gps:stable}, it suffices to show that for any 
$\alpha\colon a_1\to a_2$ in $A$, the category $(\alpha/c)$ is homotopy contractible.
To see this, note first that $(\alpha/c)$ is isomorphic to $(\bbDelta/(a_1/A/a_2)_\alpha)\op$, where $(a_1/A/a_2)_\alpha$ is the category of pairs of morphisms in $A$ with composite $\alpha$.

Now for any category $B$, there is a functor $s\colon (\bbDelta/B)\op \to B$ which picks out the first object in a string of arrows.
For any $b\in B$, the fiber $s^{-1}(b)$ has a terminal object, hence is homotopy contractible.
Moreover, $s^{-1}(b)$ is a coreflective subcategory of $(b/s)$, so the latter is also homotopy contractible.
Thus $s$ is homotopy final, and in particular a homotopy equivalence.

Therefore, to show that $(\alpha/c) \cong (\bbDelta/(a_1/A/a_2)_\alpha)\op$ is homotopy contractible, it suffices to show that $(a_1/A/a_2)_\alpha$ is so.
But by~\cite[Theorem 3.15]{gps:stable}, this follows from homotopy exactness of 
the square \begin{equation}
  \vcenter{\xymatrix@-.5pc{
      A\ar[r]^{1_A}\ar[d]_{1_A} &
      A\ar[d]^{1_A}\\
      A\ar[r]_{1_A} &
      A.
      }}
\end{equation}

This completes the proof that $c\colon (\bbDelta/A)\op \to \tw(A)\op$ is homotopy final.
Thus, the coend $\int^A X$ can be identified with the colimit of the restriction of $X$ along
\[\xymatrix@C=3pc{(\bbDelta/A)\op \ar[r]^{c} &
  \tw(A)\op \ar[r]^{(t\op,s\op)} &
  A\op\times A.}
\]
Moreover, since $\pi_{(\bbDelta/A)\op}$ factors through $\bbDelta\op$ by a functor $p\colon (\bbDelta/A)\op \to \bbDelta\op$, this colimit is also isomorphic to the colimit of $p_!c^*(t\op,s\op)^*X$;
this is our ``simplicial bar construction''.
To see that it deserves the name, note that $p$ is a discrete opfibration, so that each pullback square
\begin{equation}
  \vcenter{\xymatrix@-.5pc{
      P_n\ar[r]\ar[d] &
      (\bbDelta/A)\op\ar[d]\\
      \bbone\ar[r]_{[n]} &
      \bbDelta\op
      }}
\end{equation}
is homotopy exact, where $P_n$ is the discrete category on the set of composable strings of $n$ arrows in $A$.
Thus, $p_!c^*(t\op,s\op)^*X$ looks like
\begin{equation}
  \xymatrix{
    \raisebox{-3mm}{$\cdots$} \ar@<1.6mm>[r] \ar@{}[r]|-{\vdots} \ar@<-3mm>[r] &
    \displaystyle\coprod_{a_1 \to a_2 \to a_3}^{\vphantom{a_n}} X(a_3,a_1)
    \ar@{<-}@<1mm>[r] \ar@{<-}@<-1mm>[r] \ar[r] \ar@<2mm>[r] \ar@<-2mm>[r] &
    \displaystyle\coprod_{a_1 \to a_2}^{\vphantom{a_n}} X(a_2,a_1)
    \ar@<1mm>[r] \ar@<-1mm>[r] \ar@{<-}[r] &
    \displaystyle\coprod_a^{\vphantom{a_n}} X(a,a)
  }
\end{equation}
as we expect of a bar construction.

A third definition of the coend, familiar in enriched category theory, is as the weighted colimit of $X$ weighted by the hom-functor $\hom_A\colon A\op\times A\to \bSet$.
There is no direct notion of ``weighted colimit'' in a derivator, but in this case we can mimic one using left Kan extensions into \emph{collages} (see also~\cite{gs:enriched}).
Specifically, consider the category $\ncoll(\hom_A)$ which contains $A\op\times A$ as a full subcategory, 
together with one additional object $\infty$, and with the morphisms from $(a_1,a_2)$ to $\infty$ being the homset $A(a_1,a_2)$ and no nonidentity morphisms with domain~$\infty$.
Classically, we can obtain the $\hom_A$-weighted colimit of $X$ by left Kan extending from $A\op\times A$ to $\ncoll(\hom_A)$, then evaluating at $\infty$.
However, it is easy to see that there is a comma square
\begin{equation}\label{eq:coendcollage}
  \vcenter{\xymatrix@C=3pc{
      \tw(A)\op \ar[r]^-{(t\op,s\op)} \ar[d]_{\pi_{\tw(A)\op}} \drtwocell\omit &
      A\op\times A\ar[d]\\
      \bbone \ar[r]_-{\infty} &
      \ncoll(\hom_A)
      }}
\end{equation}
so that by (Der4), the same is true in any derivator.

\section{Unitality of the bicategory of profunctors}
\label{sec:bicatunit}

In this appendix we will prove \autoref{thm:bicatunit}.
Recall the statement:

\begin{lem}
  For any diagrams
  \[X \in \D(A\times B\op) \qquad\text{and}\qquad Y\in \D(C) \]
  we have a natural isomorphism
  \[ X\otimes_{[B]} \big((t,s)_! (\pi_{\tw(B)})^* Y\big) \cong X\otimes Y. \]
\end{lem}

\begin{proof}
Since $(t,s)\colon \tw(B)\xto{}B\times B\op$ is a Grothendieck opfibration, the following pullback square is homotopy exact
\[
\xymatrix@C=4pc{
  \tw(B)\op\times_{B} \tw(B)\ar[r]^-{t\op\times 1}\ar[d]_{1\times s}&
  B\op\times \tw(B)\ar[d]^{1\times (t,s)}\\
  \tw(B)\op\times B\op\ar[r]_-{(t\op,s\op)\times 1}&
  B\op\times B\times B\op.
}
\]
Thus, in the following diagram, the square commutes up to isomorphism for any derivator \E:
\begin{equation}
\small\vcenter{\xymatrix@C=2.5pc{
  \E(\tw(B)\op\times_{B} \tw(B))\ar[d]_{s_!}
  &\E(B\op\times \tw(B))\ar[l]_-{(t\op)^*}\ar[d]^{(t,s)_!}
  &\E(B\op)\ar[l]_-{(\pi_{\tw (B)})^\ast}\\
  \E(\tw(B)\op\times B\op)\ar[d]_{(\pi_{\tw (B)\op})_!} &
  \E(B\op\times B\times B\op)\ar[l]^{(t\op,s\op)^\ast}\\
  \E(B\op)&
}}\label{eq:unitalitysq}
\end{equation}
Letting $\E = \shift\D{A\times C}$ and using the fact that $\otimes$ preserves colimits in each variable, for $X$ and $Y$ as in the statement of the lemma we have
\begin{align}
  X\otimes_{[B]} \big((t,s)_! (\pi_{\tw(B)})^* Y\big)
  &=
  (\pi_{\tw(B)\op})_! \, (t\op,s\op)^* (X\otimes (t,s)_! (\pi_{\tw(B)})^* Y) \\
  &\cong
  (\pi_{\tw(B)\op})_! \, (t\op,s\op)^* (t,s)_! (\pi_{\tw(B)})^* (X\otimes Y)\\
  &\cong
  (\pi_{\tw(B)\op})_! \,s_!\,(t\op)^* \, (\pi_{\tw(B)})^* \, (X\otimes Y)\label{eq:unitiso-pullout}
\end{align}
Thus, it suffices to show that the upper-left composite in~\eqref{eq:unitalitysq} is isomorphic to the identity functor.
For this, it will suffice to show that the following square is homotopy exact
\begin{equation}\label{eq:unithoexsq}
  \vcenter{\xymatrix{
      \tw(B)\op\times_{B} \tw(B) \ar[r]^-{t\op}\ar[d]_{s} \drtwocell\omit &
      B\op\ar@{=}[d]\\
      B\op\ar@{=}[r] &
      B\op.
    }}
\end{equation}
The objects of $\tw(B)\op\times_{B} \tw(B)$ are composable pairs $(b_1\xto{\beta}b_2 \xto{\gamma} b_3)$ of morphisms in $B$, and its morphisms from $(\gamma,\beta)$ to $(\gamma',\beta')$ are commutative diagrams:
\begin{equation}
  \vcenter{\xymatrix@-.7pc{
      \ar[r]^\beta\ar@{<-}[d] &
      \ar[r]^\gamma\ar[d] &
      \ar@{<-}[d]\\
      \ar[r]_{\beta'} &
      \ar[r]_{\gamma'} &
    }}
\end{equation}
We have $t\op(\gamma,\beta)=b_3$ and $s(\gamma,\beta) = b_1$, and the natural transformation in~\eqref{eq:unithoexsq} simply composes $\beta$ and $\gamma$ (this goes in the direction shown because its target is $B\op$ rather than $B$).
Now using the characterization of homotopy exact squares in \cite[Theorem 3.15]{gps:stable}, it suffices to show that for any $b_0, b_4 \in B$ and $\ph\colon b_0 \to b_4$, the category
\[ \Big( b_4 \,/\, \big(\tw(B)\op\times_{B} \tw(B)\big) \,/ \, b_0 \Big)_\ph \]
is homotopy contractible.
Denote this category by $E_4$: its objects are composable quadruples $(b_0 \xto{\alpha} b_1 \xto{\beta} b_2 \xto{\gamma} b_3 \xto{\delta} b_4)$ in $B$ whose composite is $\ph$, where the objects $b_0$ and $b_4$ are fixed but the other three can vary.
Its morphisms are commutative diagrams
\begin{equation}
  \vcenter{\xymatrix@C=1.5pc@R=1.2pc{
      &\ar[r]^\beta\ar@{<-}[dd] &
      \ar[r]^\gamma\ar[dd] &
      \ar@{<-}[dd] \ar[dr]^{\delta}\\
      b_0 \ar[ur]^{\alpha} \ar[dr]_{\alpha'}&&&&
      b_4 .\\
      &\ar[r]_{\beta'} &
      \ar[r]_{\gamma'} &
      \ar[ur]_{\delta'}
    }}
\end{equation}
Now the full subcategory $E_3\subset E_4$ of objects where $\delta$ is an identity is coreflective in $E_4$.
Then the further subcategory $E_2\subset E_3$ where both $\gamma$ and $\delta$ are identities is reflective in $E_3$.
Finally, the third subcategory $E_1 \subset E_2$ where $\beta$, $\gamma$, and $\delta$ are all identities is coreflective in $E_2$, and $E_1$ contains only the single object
\[(b_0 \xto{\ph} b_4 = b_4 = b_4 = b_4).\]
Thus, $E_4$ is connected by a chain of adjunctions to $\bbone$, hence is homotopy contractible.
\end{proof}

Taking $Y = \lS_{\bbone}$ in \autoref{thm:bicatunit}, we have an isomorphism $\rho\colon X\otimes_{[B]} \lI_B \cong X$.
Dually, of course, we have $\lambda\colon\lI_A \otimes_{[A]} X \cong X$.

\begin{lem}
  These associativity and unitality isomorphisms satisfy the unit axiom
  \begin{equation}\label{eq:profcoh}
    \vcenter{\xymatrix{
        (X \otimes_{[B]} \lI_B) \otimes_{[B]} Y \ar[rr] \ar[dr] &&
        X \otimes_{[B]} (\lI_B \otimes_{[B]} Y) \ar[dl] \\
        & X\otimes_{[B]} Y.
        }}
  \end{equation}
\end{lem}
\begin{proof}
  Consider coherent diagrams $X\in\D(A\times B\op)$ and $Y\in\D(B\times C\op)$, and $\lI_B\in\D(B\times B\op)$.
  By passing to the derivator $\shift\D{A\times C\op}$, we can suppress the additional parameters~$A$ and~$C\op$.
  We will construct a commutative diagram
  \begin{equation}\label{eq:coh}
    \vcenter{\xymatrix{
        (X \otimes_{[B]} \lI_B) \otimes_{[B]} Y \ar[rr]^-\cong \ar[d]_\cong &&
        X \otimes_{[B]} (\lI_B \otimes_{[B]} Y) \ar[d]^\cong \\
        \int^B (X \otimes \lS_\bbone) \otimes Y \ar[rr]_-\cong \ar[dr] &&
        \int^B X \otimes (\lS_\bbone \otimes Y) \ar[dl] \\
        & \int^BX\otimes Y
        }}
  \end{equation}
  in which the unlabeled vertical isomorphisms are given by \autoref{thm:bicatunit} and the horizontal ones are the respective associativity constraints.
  Of course, the lower triangle commutes by the unit axiom for the external monoidal structure $\otimes$, so it suffices to produce the upper square.

  First of all, since $\otimes$ is pseudonatural and cocontinuous in both variables, we may pull all restriction and 
  left Kan extension functors out of all tensor products, as in~\eqref{eq:unitiso-pullout}.
  By the functoriality of mates, this does not affect the commutativity of any diagrams.
  Thus, we may consider $(X \otimes_{[B]} \lI_B) \otimes_{[B]} Y$ to be obtained from $(X\otimes\lS_\bbone  )\otimes Y$ 
  by transporting along the diagonal in the following diagram, restricting to the left and left Kan extending downwards.
  \begin{equation}
    \tiny\vcenter{\xymatrix@C=1.5pc{
        \tw(B)\op\times_{B} \tw(B)\times_{B}\tw(B)\op \ar[d]_{\pi_{\tw(B)}}\ar[r]^-s
        \ar@{}[dr]|{\fbox{0}}
        &\tw(B)\op\times_{B} \tw(B)\times B\ar[d]^s\ar[r]^-{t\op}
        \ar@{}[dr]|{\fbox{2}}
        &B\op\times\tw(B)\times B\ar[r]^-{\pi_{\tw(B)}}\ar[d]^{(t,s)}
        &B\op\times B\ar[dd]^-=\\
        \tw(B)\op\times\tw(B)\op\ar[d]_{{\pi_{\tw(B)\op}}\times 1}\ar[r]|-{(t\op,s\op)}
        \ar@{}[dr]|{\fbox{1}}
        &\tw(B)\op\times B\op\times B\ar[d]^-{\pi_{\tw(B)\op}}\ar[r]_-{(t\op,s\op)}
        &B\op\times B\times B\op\times B
        \ar@{}[dr]|{\fbox{3}}
        &\\
        \tw(B)\op\ar[r]_-{(t\op,s\op)}\ar[d]_-{\pi_{\tw(B)\op}}
        \ar@{}[drrr]|{\fbox{4}}
        &B\op\times B\ar[rr]_-=
        &
        &B\op\times B \ar[d] \\
        \bbone \ar[rrr]_-{\infty}&&&
        \ncoll(\hom_B).
      }}\label{eq:rho}
  \end{equation}
  The three small rectangles~\fbox{\tiny 0}--\fbox{\tiny 2} are homotopy exact since all of them are pullbacks and in each case one of the relevant functors is a Grothendieck (op)fibration.
  Moreover, we know from the proof of \autoref{thm:bicatunit} that the large rectangle on the right (composed of regions~\fbox{\tiny 2} and~\fbox{\tiny 3}) 
  endowed with the same 2-cell as in that proof is homotopy exact.
  Finally, in the rectangle~\fbox{\tiny 4} we have the 2-cell~\eqref{eq:coendcollage} rendering it a comma square, hence homotopy exact.

  For conciseness, let us denote by $\mu_k$ the mate-transformation associated to a composite of regions~\fbox{\tiny$k$}, perhaps whiskered with functors on one side or the other.
  Then by definition, the left vertical isomorphism of~\eqref{eq:coh} is the composite $\mu_{23} \mu_2^{-1}$.
  By the functoriality of mates, we have
  \begin{align}
    \mu_{23} \mu_2^{-1}
    &= \mu_{0123} \mu_0^{-1}\mu_1^{-1}\mu_2^{-1}\\
    &= \mu_{0123} \mu_0^{-1}\mu_2^{-1}\mu_1^{-1}\\
    &= \mu_{0123} \mu_{02}^{-1}\mu_1^{-1}.
  \end{align}

  Similarly, we may consider $X \otimes_{[B]} (\lI_B \otimes_{[B]} Y)$ to be obtained from $X\otimes(\lS_\bbone  \otimes Y)$ by transporting along the diagonal in the following diagram:
  \begin{equation}
    \tiny\vcenter{\xymatrix@C=1.3pc{
        \tw(B)\op\times_{B} \tw(B)\times_{B}\tw(B)\op \ar[d]_{\pi_{\tw(B)}}\ar[r]^-{t\op}
        \ar@{}[dr]|{\fbox{5}}
        &B\op\times \tw(B)\times_{B} \tw(B)\op\ar[d]^t\ar[r]^-s
        \ar@{}[dr]|{\fbox{7}}
        &B\op\times\tw(B)\times B\ar[r]^-{\pi_{\tw(B)}}\ar[d]^{(t,s)}
        &B\op\times B\ar[dd]^-=\\
        \tw(B)\op\times\tw(B)\op\ar[d]_{1\times{\pi_{\tw(B)\op}}}\ar[r]|-{(t\op,s\op)}
        \ar@{}[dr]|{\fbox{6}}
        &B\op\times B\times\tw(B)\op\ar[d]^-{\pi_{\tw(B)\op}}\ar[r]_-{(t\op,s\op)}
        &B\op\times B\times B\op\times B
        \ar@{}[dr]|{\fbox{8}}
        &\\
        \tw(B)\op\ar[r]_-{(t\op,s\op)}\ar[d]_-{\pi_{\tw(B)\op}}
        \ar@{}[drrr]|{\fbox{9}}
        &B\op\times B\ar[rr]_-=
        &
        &B\op\times B \ar[d]\\
        \bbone \ar[rrr]_-{\infty}&&&
        \ncoll(\hom_B)
      }}\label{eq:lambda}
  \end{equation}
  As before, the rectangles~\fbox{\tiny 5}--\fbox{\tiny 7} are homotopy exact, as is the rectangle~\fbox{\tiny 78} with an analogous 2-cell, while \fbox{\tiny 9} is literally equal to \fbox{\tiny 4}.
  The right vertical composite in~\eqref{eq:coh} is then by definition $\mu_{78} \mu_7^{-1}$, which as before is equal to $\mu_{5678} \mu_{57}^{-1} \mu_6^{-1}$.

  However, from the proof of the Fubini Theorem~\ref{lem:Fubini}, the associativity isomorphism from \autoref{thm:bicatassoc} is composed of the associativity of $\otimes$ together with $\mu_6 \mu_1^{-1}$.
  Thus, it will suffice to show that $\mu_{5678} \mu_{57}^{-1} = \mu_{0123} \mu_{02}^{-1}$.
  But the commutative rectangles~\fbox{\tiny02} and~\fbox{\tiny57} are obviously identical, so it remains to show $\mu_{5678} = \mu_{0123}$.

  Now the 2-cells that live in regions~\fbox{\tiny 0123} and~\fbox{\tiny 5678} are \emph{not} equal, and indeed it doesn't even make sense to ask whether they are, since their codomains are different.
  The equality $\mu_{5678} = \mu_{0123}$ only makes sense after postwhiskering with $(\pi_{\tw(B)\op})_!$.
  However, since regions~\fbox{\tiny4} and~\fbox{\tiny9} are homotopy exact, to show the equality of these two whiskerings, it suffices to show the equality of the pasted 2-cells in the regions~\fbox{\tiny01234} and~\fbox{\tiny56789}, whose domains and codomains \emph{are} equal.
  It is straightforward to check that both of these 2-cells just ``compose up'' a composable string of three arrows.
  Hence they are the same, and~\eqref{eq:coh} commutes.
\end{proof}

\bibliographystyle{alpha}
\bibliography{derivbicat}

\end{document}